\title[Weak contact equations]{Weak contact equations for mappings into Heisenberg groups}
\author{Zolt\'an M. Balogh, Piotr Haj{\l}asz, and Kevin Wildrick}
\address{Z.M.\ Balogh: Mathematisches Institut, Universit\"at Bern, Sidlerstrasse 5, 3012 Bern, Switzerland ({\tt balogh.zoltan@math.unibe.ch})}
\address{P.\ Haj{\l}asz: Department of Mathematics, University of Pittsburgh, 301
  Thackeray Hall, Pittsburgh, PA 15260, USA, {\tt hajlasz@pitt.edu}}
\address{K. Wildrick: Mathematisches Institut, Universit\"at Bern, Sidlerstrasse 5, 3012 Bern, Switzerland ({\tt kevin.wildrick@math.unibe.ch})}
\thanks{Z.M.B.\ and K.W.\ were supported by the Swiss National Science Foundation, 
European Research Council Project CG-DICE, and the European Science Council 
Project HCAA. P.H.\ was supported by NSF grant DMS-1161425.}
\def\rank{{\rm rank\,}}
\newcommand{\cJ}{\mathcal J}
\newcommand{\boldg}{{\mathbf g}}
\def\eps{\varepsilon}
\def\vi{\varphi}
\def\M{{\mathcal M}}
\def\H{{\mathcal H}}
\newtheorem{theorem}{Theorem}
\newtheorem{lemma}[theorem]{Lemma}
\newtheorem{corollary}[theorem]{Corollary}
\newtheorem{proposition}[theorem]{Proposition}
\def\diam{{\rm diam\,}}
\def\dist{{\rm dist\,}}
\def\lip{{\rm Lip\,}}
\def\loc{{\rm loc}}
\theoremstyle{definition}
\newtheorem{remark}[theorem]{Remark}
\newtheorem{example}[theorem]{Example}
\newcommand{\barint}{
\rule[.036in]{.12in}{.009in}\kern-.16in \displaystyle\int }
\newcommand{\barcal}{\mbox{$ \rule[.036in]{.11in}{.007in}\kern-.128in\int $}}
\def\eqn#1$$#2$${\begin{equation}\label#1#2\end{equation}}
\def\vint#1_#2{-\kern-#1pt\int_{#2}}
\newcommand{\bbbr}{\mathbb R}
\newcommand{\bbbh}{\mathbb H}
\newcommand{\bbbc}{\mathbb C}
\newcommand{\subeq}{\subseteq}
\newcommand{\inv}{^{-1}}
\def\ap{\operatorname{ap}}
\def\wk{\operatorname{wk}}
\def\loc{\operatorname{loc}}
\def\diam{\operatorname{diam}}
\def\dist{\operatorname{dist}}
\def\im{\operatorname{im}}
\def\mvint_#1{\mathchoice
          {\mathop{\vrule width 6pt height 3 pt depth -2.5pt
                  \kern -8pt \intop}\nolimits_{\kern -3pt #1}}%
          {\mathop{\vrule width 5pt height 3 pt depth -2.6pt
                  \kern -6pt \intop}\nolimits_{#1}}%
          {\mathop{\vrule width 5pt height 3 pt depth -2.6pt
                  \kern -6pt \intop}\nolimits_{#1}}%
          {\mathop{\vrule width 5pt height 3 pt depth -2.6pt
                  \kern -6pt \intop}\nolimits_{#1}}}
\numberwithin{theorem}{section} \numberwithin{equation}{section}
\begin{document}

\subjclass[2010]{Primary 49Q15, 53C17; Secondary 46E35, 53C23}
\keywords{Heisenberg group, unrectifiability, geometric measure theory, Gromov conjecture, Sobolev mappings, 
Sard theorem, symplectic form}
\sloppy


\begin{abstract}
Let $k>n$ be positive integers. We consider mappings from a subset of $\bbbr^k$ to the Heisenberg 
group $\bbbh^n$ with a variety of metric properties, each of which imply that the mapping in question 
satisfies some weak form of the contact equation arising from the sub-Riemannian structure of the Heisenberg 
group. We illustrate a new geometric technique that shows directly how the weak contact equation greatly 
restricts the behavior of the mappings. In particular, we provide a new and elementary proof of the fact that 
the Heisenberg group $\bbbh^n$ is purely
$k$-unrectifiable. We also prove that for an open set $\Omega \subset \bbbr^{k}$, the rank of the weak 
derivative of a weakly contact mapping in the Sobolev space $W^{1,1}_{\loc}(\Omega;\bbbr^{2n+1})$
is bounded by $n$ almost everywhere, answering a question of Magnani. Finally
we prove that if $f \colon \Omega \to \bbbh^n$ is $\alpha$-H\"older continuous, $\alpha>1/2$, and locally 
Lipschitz when considered as a mapping into $\bbbr^{2n+1}$, then $f$ cannot be injective.  
This result is related to a conjecture of Gromov.
\end{abstract}

\maketitle

\section{Introduction}
\label{introduction}

Let $n$ be a positive integer. The Heisenberg group $\bbbh^n$ is a non-commutative Lie group 
structure on $\bbbc^n\times\bbbr=\bbbr^{2n+1}$
that plays an important role in a variety of areas, including mathematical physics, complex analysis, 
hyperbolic geometry, and control theory. It is also a key example in the theory of analysis on metric spaces.
Using the notation
$$
p=(z,t) = (z_1,\ldots,z_n,t) = (x,y,t)=(x_1,y_1,\ldots,x_n,y_n,t)
$$
for a point of $\bbbh^n$, the Lie algebra of $\bbbh^n$ is defined by the basis of left-invariant vector fields 
\begin{equation}
\label{XY}
X_j=\frac{\partial}{\partial x_j} + 2y_j\frac{\partial}{\partial t},\
Y_j=\frac{\partial}{\partial y_j}-2x_j\frac{\partial}{\partial t},\ j=1,\ldots,n, \
\mbox{and}\
T=\frac{\partial}{\partial t}\, .
\end{equation}
The Heisenberg group is equipped with the non-integrable
{\em horizontal distribution} $H\bbbh^n$, which is defined at every
point $p\in\bbbh^n$ by
$$
H_p\bbbh^n={\rm span}\, \{ X_1(p),\ldots,X_n(p),Y_1(p),\ldots,Y_n(p)\}.
$$
This distribution coincides with the kernel at $p$ of the \emph{standard contact form}
\begin{equation}
\label{the-alpha-form}
\alpha = dt + 2 \sum_{j=1}^n (x_j \, dy_j - y_j \, dx_j)
\end{equation}
on $\bbbr^{2n+1}$. 
The horizontal distribution naturally induces a length metric on $\bbbh^n$, called the Carnot-Carath\'eodory metric, by considering only curves that are almost everywhere tangent to the distribution; 
see Section \ref{heisenberg} for a more precise description. While its topological dimension is $2n+1$, 
the Heisenberg group has Hausdorff dimension $2n+2$ when equipped with this metric. 

In this paper, we consider mappings from subsets of the Euclidean space $\bbbr^k$ to the Heisenberg group $\bbbh^n$ 
satisfying a variety of metric conditions. Each of these conditions implies that such a mapping $f$ is 
tangent \emph{in some sense} to the horizontal distribution, i.e., that in some sense $f$ satisfies 
the \emph{contact equation}
$$
f^*\alpha = 0.
$$
We use a new and geometric argument to prove that the contact equation greatly restricts the mapping's behavior. 
To illustrate this method, our main focus in this paper is a new proof of the pure $k$-unrectifiability of 
the Heisenberg group $\bbbh^n$, when $k>n$: 
\begin{theorem}
\label{pure k unrect}  
Let $k>n$ be positive integers. Let $E \subset \bbbr^k$ be a measurable set, and let $f \colon E \to \bbbh^n$ 
be a Lipschitz mapping. Then $\H^k_{\bbbh^n}(f(E))=0$.
\end{theorem}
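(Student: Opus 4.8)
The plan is to prove the theorem in four steps. First I would reduce matters: since $\H^k_{\bbbh^n}$ is countably subadditive and $\bbbr^k$ is a countable union of cubes, it suffices to treat a bounded $E$ contained in a fixed cube $Q$. The identity map $\iota\colon(\bbbh^n,d_{cc})\to(\bbbr^{2n+1},|\cdot|)$ is locally Lipschitz because $|p-q|\le C\,d_{cc}(p,q)$ on bounded sets, so $F:=\iota\circ f\colon E\to\bbbr^{2n+1}$ is Euclidean Lipschitz; extend it coordinatewise (McShane) to a Lipschitz map $\widetilde F\colon\bbbr^k\to\bbbr^{2n+1}$ and apply Rademacher's theorem, so that $\widetilde F$, hence $F$, is differentiable at almost every density point $x$ of $E$. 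Write $L_x:=D\widetilde F(x)$.

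\emph{The weak contact equation.} For such an $x$ and $y\in E$, write $f(x)^{-1}\ast f(y)=(\zeta(y),\tau(y))$; since the Carnot--Carath\'eodory norm of $(\zeta,\tau)$ is comparable to $|\zeta|+|\tau|^{1/2}$, one gets $|\tau(y)|\le c\,d_{cc}(f(x),f(y))^2\le C|x-y|^2$. Computing $\tau(y)$ from the group law and \eqref{the-alpha-form} shows that, as a function of $y$, it extends to a map differentiable at $x$ with value $0$ and differential $v\mapsto\alpha_{\widetilde F(x)}(L_xv)$; the quadratic bound along the density-one set $E$ then forces this linear map to vanish, i.e.\ $\im L_x\subset\ker\alpha_{\widetilde F(x)}=H_{\widetilde F(x)}$. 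Since $\pi\colon\bbbr^{2n+1}\to\bbbr^{2n}$, $\pi(x,y,t)=(x,y)$, maps each $H_p\bbbh^n$ isomorphically onto $\bbbr^{2n}$, we get $\rank L_x=\dim V_x$, where $V_x:=\pi(\im L_x)\subset\bbbr^{2n}$.

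\emph{The rank bound, which I expect to be the main obstacle.} The goal is to show that for a.e.\ $x\in E$ the subspace $V_x$ is isotropic for the symplectic form $\omega:=\sum_jdx_j\wedge dy_j$ (a constant multiple of $d\alpha$); since an isotropic subspace of a $2n$-dimensional symplectic space has dimension at most $n$, this gives $\rank L_x\le n<k$ a.e.\ on $E$. Morally one differentiates the contact equation $\alpha_{\widetilde F}(\partial_i\widetilde F)=0$ and forms the combination $\partial_i$ of the $j$-th relation minus $\partial_j$ of the $i$-th; the second-order terms in $\widetilde F$ cancel by equality of mixed partials --- this cancellation is exactly the identity $d(\widetilde F^\ast\alpha)=\widetilde F^\ast(d\alpha)$, valid distributionally for Lipschitz maps --- leaving $\widetilde F^\ast\omega=0$. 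The difficulty is that the contact equation holds only almost everywhere on the \emph{possibly nowhere dense} set $E$, so it cannot be differentiated directly. I would attempt to circumvent this by first reducing, via inner approximation and exhaustion up to a null set, to the case where $E$ has locally finite perimeter, and then using the distributional identity $\widetilde F^\ast\omega=d(\widetilde F^\ast\alpha)$ together with the vanishing of the $L^\infty$ one-form $\widetilde F^\ast\alpha$ on $E$ to force the $L^\infty$ function $\mathbf 1_E\,\widetilde F^\ast\omega$ to be zero. This is delicate, since $\widetilde F^\ast\alpha$ need not be of bounded variation, and it is precisely here that the paper's new geometric technique --- accounting for and summing the vertical displacement of $f$ around the boundaries of the cubes of a fine grid --- is needed to carry the argument.

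\emph{The covering argument.} Granting $\rank L_x\le n$ and $V_x$ isotropic a.e., subdivide $Q$ into $N^k$ subcubes of side $\ell(Q)/N$. On all but a vanishingly small fraction of them (the rest controlled crudely by the Lipschitz bound), $\widetilde F$ is uniformly close to an affine map whose horizontal projection has isotropic image of dimension $\le n$, so that $f(E\cap Q_i)$ --- whose Carnot--Carath\'eodory diameter is $\lesssim L\ell(Q)/N$ --- can be covered by roughly $(\ell(Q)/(N\varrho))^{n}$ Carnot--Carath\'eodory balls of radius $\varrho$ (up to the contribution of the weight-two vertical direction), once $\varrho$ is small relative to the mesh. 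Summing over the $N^k$ subcubes and optimising $\varrho$ bounds the $\H^k_{\bbbh^n}$-content of $f(E)$ by a quantity that tends to $0$ as $N\to\infty$, because $k>n$; letting $N\to\infty$ and ranging over a countable cover of $\bbbr^k$ by cubes gives $\H^k_{\bbbh^n}(f(E))=0$. Making the covering count precise requires treating the weight-two vertical direction carefully, especially in the borderline cases $k\in\{n+1,n+2\}$, which is again where the geometric estimate of the previous step pays off.
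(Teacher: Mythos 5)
Your setup (exhaustion, McShane extension, Rademacher) and your derivation of the weak contact equation from the quadratic bound $|\tau(y)|\le CL^2|x-y|^2$ at density points are fine; the latter is a legitimate alternative to the paper's Lemma~\ref{3.2}. The proof breaks down, however, at exactly the two places where quantitative work is required. First, the rank/isotropy step (Theorem~\ref{low rank lipschitz} for a merely measurable $E$) is not proved: your sketch via inner approximation by sets of finite perimeter and the distributional identity $d(\widetilde F^*\alpha)=\widetilde F^*\omega$ cannot be closed as stated, because the distribution $d(\widetilde F^*\alpha)$ is computed from the values of $\widetilde F$ on an open neighborhood, where the contact equation fails, and multiplication by $\mathbf 1_E$ does not commute with $d$; no finite-perimeter hypothesis on $E$ repairs this, and you yourself defer to ``the paper's new geometric technique'' without supplying it. The actual missing idea is different from your guess about summing vertical displacements over a grid: at a density point $x_0$ of the set where $\operatorname{im} d(\pi\circ g)$ is non-isotropic, one restricts to a suitable $2$-plane, picks a small circle $S$ meeting $K$ in most of its length, extends $f|_{S\cap K}$ to all of $S$ as a Lipschitz curve \emph{into $\bbbh^n$} (Lemma~\ref{Jordan extension}); this extension is horizontal, so by Stokes the signed area enclosed by its projections vanishes, while closeness to the ellipse $dg_{x_0}(S)$ forces that area to be comparable to $|c(x_0)|\pi r^2>0$ --- a contradiction. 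Without this (or an equivalent) argument the key inequality $\rank \ap df\le n$ is simply assumed.

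Second, even granting the rank bound, your covering count does not follow from first-order information. To cover $f(E\cap Q_i)$ by roughly $(\operatorname{mesh}/\varrho)^n$ Kor\'anyi balls of radius $\varrho=\eps\cdot\operatorname{mesh}$ you need the image to lie within Carnot--Carath\'eodory distance about $\varrho$ of an isotropic coset of dimension $\le n$. Closeness of $\widetilde F$ to an affine map controls only the horizontal deviation; the vertical (weight-two) deviation of $f(x_0)^{-1}*f(y)$ is merely $O(L^2|x_0-y|^2)$ from the Lipschitz hypothesis, and its square root is of order $L\cdot\operatorname{mesh}$, not $\eps\cdot\operatorname{mesh}$. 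So the count $(\operatorname{mesh}/\varrho)^n$ would require a genuinely second-order (Pansu-type) estimate $|\tau(y)|=o(|x_0-y|^2)$ uniformly on large sets, i.e.\ the machinery one is trying to avoid. The paper circumvents this by a Sard-type decomposition into sets $K_j$ with $\rank dg=j\le n$, a change of variables built from the symplectic isometries of Lemma~\ref{standard isotropic} so that $g$ fixes the first $j$ coordinates, and then Lemma~\ref{sq}: each thin box $Q_\nu\times[0,d]^{k-j}$ is handled by extending $f$ along segments via Lemma~\ref{line extension Lipschitz}, where the horizontal extension has zero speed a.e.\ on $K_j\cap\ell$ (since the derivative of $g$ in those directions vanishes there), so the image of each slice has $d_{cc}$-diameter $\le L\H^1(\ell\setminus K_j)\le Ldm^{-1}$; this yields $m^j$ balls of radius $CLdm^{-1}$ per cube and Hausdorff content $\lesssim m^{j-k}\to 0$. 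You would need to either import these two arguments or find substitutes for them; as written, both the isotropy step and the ball-count per cube are gaps.
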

Here $\H^k_{\bbbh^n}$ denotes the Hausdorff measure with respect to the Carnot-Carath\'eodory metric in the
Heisenberg group.
This powerful result was proved by Ambrosio and Kirchheim~\cite[Theorem~7.2]{ambrosiok} in the case $n=1$. They
derived it by combining their  general results on metric differentiation, the area formula for mappings into metric spaces, and 
the Pansu differentiability of Lipschitz mappings from a subset of $\bbbr^k$ into $\bbbh^n$. 
In the case that $k=2n+2$, the result was proved earlier by David and Semmes~\cite[Section~11.5]{DS}.
In \cite{magnaniHous} and \cite{magnani}, Magnani expanded on the ideas of ~\cite{Pansu} and ~\cite{ambrosiok} to prove a very general theorem about Lipschitz mappings 
between Carnot groups that includes Theorem~\ref{pure k unrect} as a special case. 
These proofs are rather involved, and do not show in a straightforward way how basic geometric properties
of the Heisenberg group are responsible for the validity of Theorem~\ref{pure k unrect}. Our proof does not use any of the machinery employed by Ambrosio and Kirchheim. 

Instead, we employ the \emph{approximate derivative of $f$}, denoted at a point $x \in E$ by $\ap df_x$. Denote by $\H^k$ 
the Hausdorff measure in $\bbbr^k$, which coincides with the Lebesgue measure. To prove 
Theorem~\ref{pure k unrect}, we first show that a mapping $f$ as in the statement of the theorem satisfies a weak contact equation in 
the following sense: at $\H^k$-almost every $x \in E$, the image of the approximate derivative of $f$ at $x$ is contained in the horizontal distribution at $f(x)$, i.e.,
\begin{equation}
\label{approx weak contact} 
\im \ap df_x \subset \ker \alpha(f(x)) \quad \text{at $\H^k$-almost every $x \in E$}.
\end{equation}
See Lemma \ref{3.2} below. 

The proof continues by exploiting the geometric implications of the weak contact equation \eqref{approx weak contact}, and shows directly how the geometry of the Heisenberg group affects the behavior of Lipschitz mappings from Euclidean spaces. As an important step in this process, we prove:
\begin{theorem}
\label{low rank lipschitz} 
Let $k>n$ and let $E \subset \bbbr^{k}$ be a measurable set. 
If $f \colon E \to \bbbh^n$ is locally Lipschitz, then for $\H^k$-almost every point $x \in E$,
$$
\rank \ap df_x \leq n.
$$
\end{theorem}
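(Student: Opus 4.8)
The plan is to combine the weak contact equation of Lemma~\ref{3.2} with the fact that the horizontal distribution of $\bbbh^n$ carries a symplectic structure. Recall that $\ker\alpha(p)=H_p\bbbh^n$ is a $2n$-dimensional subspace of $\bbbr^{2n+1}$ and that $d\alpha=4\sum_{j=1}^n dx_j\wedge dy_j$ restricts to a non-degenerate alternating form on it. Lemma~\ref{3.2} gives, at $\H^k$-almost every $x\in E$, that $V_x:=\im\ap df_x$ is contained in $H_{f(x)}\bbbh^n$. I claim that for almost every such $x$ the subspace $V_x$ is moreover \emph{isotropic} for $d\alpha$, i.e.
\begin{equation}
\label{iso-goal}
d\alpha(f(x))(v,w)=0\qquad\text{for all } v,w\in V_x .
\end{equation}
Granting \eqref{iso-goal}, the theorem is immediate from linear algebra: an isotropic subspace $V$ of a $2n$-dimensional symplectic space satisfies $V\subseteq V^{\perp}$, and since $\dim V+\dim V^{\perp}=2n$ this forces $\dim V\le n$; hence $\rank\ap df_x=\dim V_x\le n$.

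Thus everything reduces to \eqref{iso-goal}, whose content is that the purely \emph{first}-order identity $f^*\alpha=0$ forces the \emph{second}-order identity $f^*(d\alpha)=0$, via $f^*(d\alpha)=d(f^*\alpha)$. I would first do this for $E=\Omega$ open, where $f\in W^{1,\infty}_{\loc}(\Omega;\bbbr^{2n+1})$. By Rademacher's theorem $\ap df_x=df_x$ almost everywhere, so by Lemma~\ref{3.2} the $L^\infty_{\loc}$ one-form
$$
f^*\alpha=df^{t}+2\sum_{j=1}^{n}\bigl(f^{x_j}\,df^{y_j}-f^{y_j}\,df^{x_j}\bigr)
$$
vanishes almost everywhere, hence as a distribution. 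Applying the weak product rule — $d(u\,dv)=du\wedge dv$ in $\mathcal{D}'$ whenever $u,v\in W^{1,2}_{\loc}$, proved by mollification — together with $d\circ d=0$ for Sobolev functions, one obtains $f^*(d\alpha)=4\sum_{j=1}^{n}df^{x_j}\wedge df^{y_j}=d(f^*\alpha)=0$ in $\mathcal{D}'(\Omega)$. Since $f^*(d\alpha)$ has locally bounded coefficients it vanishes almost everywhere, which is exactly \eqref{iso-goal}.

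For a general measurable $E$ I would reduce to the open case by a Lusin-type approximation: up to a null set, $E$ is a countable union of sets $A_i$ on each of which $f$ agrees with a map $g_i\in C^1(\bbbr^k;\bbbr^{2n+1})$, with $\ap df_x=dg_{i,x}$ for almost every $x\in A_i$; Lemma~\ref{3.2} then gives $g_i^*\alpha=0$ almost everywhere on $A_i$. At a density point $x$ of $A_i$ the continuous form $g_i^*\alpha$ vanishes on a set of density one and has continuous distributional exterior derivative $g_i^*(d\alpha)$, and a standard argument — using the almost-everywhere symmetry of the approximate second differential of a $C^1$ map — yields $g_i^*(d\alpha)(x)=0$; the linear-algebra argument above then applies with $dg_{i,x}=\ap df_x$.

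I expect the genuinely delicate point to be precisely this first-to-second-order passage for non-smooth maps: the formal computation $f^*(d\alpha)=d(f^*\alpha)$ involves second derivatives of $f$ that do not exist pointwise, and one must verify that the offending terms cancel before any limit is taken — which is exactly what the weak product rule and $d\circ d=0$ encode. The linear-algebra step and the reduction to open sets are routine by comparison. One could instead argue geometrically, applying Stokes' theorem to $f^*\alpha$ over small two-dimensional disks whose boundary circles lie in the set where the contact equation holds; this fits the geometric spirit of the paper, but controlling the boundary error terms is itself subtle, so I would still regard the first-to-second order passage as the heart of the proof.
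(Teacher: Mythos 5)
Your overall skeleton (weak contact equation from Lemma~\ref{3.2}, isotropy of the image of the derivative, then the symplectic linear algebra $\dim V\le n$ as in Lemma~\ref{dim_n}) agrees with the paper, and your treatment of the open case via the distributional identity $d(f^*\alpha)=f^*(d\alpha)$ is correct --- indeed the paper itself points out that this is the easy, known case. The genuine gap is in your reduction of the measurable case: the step ``$g_i^*\alpha$ vanishes on a set of density one at $x$, its distributional exterior derivative $g_i^*(d\alpha)$ is continuous, hence $g_i^*(d\alpha)(x)=0$'' is not a standard fact and, as stated, is not a valid implication. The tool you invoke does not exist: a $C^1$ map need not admit even an approximate second differential at a single point (take a primitive of a continuous nowhere approximately differentiable function), so there is no ``a.e.\ symmetry of the approximate second differential'' to appeal to. And the abstract principle fails quantitatively: the form $\beta=g_i^*\alpha$ has merely \emph{continuous} coefficients, so a Stokes estimate on a circle of radius $r$ about a density point bounds the boundary integral of $\beta$ only by $(\text{modulus of continuity of }\beta)\cdot(\text{length of the bad arc})=o(1)\cdot o(r)$, which does not beat the main term $\sim c\,r^2$ coming from $g_i^*(d\alpha)(x)\ne 0$. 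In particular the coefficient $dg^t$ is only $o(1)$-close to its value at the center, not $O(r)$-close, and this single term destroys the naive estimate. This is exactly the ``substantial difficulty'' the paper flags when $E$ is only measurable.

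The paper closes this gap by using that $f$ is Lipschitz \emph{into $\bbbh^n$}, not merely that it agrees with a $C^1$ map: at a density point $x_0$ of $K\cap(a+V)$ one extends $f|_{\partial B_0\cap K}$ to the whole circle by Lemma~\ref{Jordan extension}, obtaining a Lipschitz, hence horizontal, closed curve $F$, for which $\int_{\partial B_0}dF^t=0$ holds \emph{exactly} by \eqref{seven}; this eliminates the troublesome $dg^t$ term altogether. The remaining comparison between $F$ and $g$ on the short bad arc $\partial B_0\setminus K$ involves only coefficients of the form $g^{x_i}-g^{x_i}(x_0)$, $F^{x_i}-f^{x_i}(x_0)$, which are $O(Lr)$ by Lipschitz continuity, against forms $dg$, $dF$ with coefficients $O(L)$, so the error is $O(L^2\eps r^2)$ and is beaten by the lower bound $|c(x_0)|\pi r^2/2$ from \eqref{apply LDT}. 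So the Stokes-on-small-disks route you mention and set aside in your last paragraph is not an alternative to the ``first-to-second-order passage'' --- it \emph{is} the proof, and the boundary error terms are controllable precisely because of the metric Lipschitz hypothesis and the horizontal Lipschitz extension; without that input your argument for the measurable case does not go through.
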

\begin{remark}
\label{rem}
We will actually prove a stronger result:
{\em the image of $\ap d(\pi\circ f)_x$ is an isotropic subspace of
$\bbbr^{2n}$ for almost all $x\in E$, where
$\pi:\bbbr^{2n+1}\to\bbbr^{2n}$ is the orthogonal projection defined in \eqref{proj}.}
\end{remark}
In the case when $E$ is open, the proof of Theorem \ref{low rank lipschitz} is much easier and well 
known, see e.g.\ \cite[Proposition~1.1]{hajlaszst}, \cite{magnani2}.
Namely, the mapping $f$ is locally Lipschitz continuous as a mapping into $\bbbr^{2n+1}$
and it satisfies the contact equation almost everywhere in the standard sense of differential forms. 
Taking the exterior derivative of this equation in the distributional sense
then easily leads to the result. 

When $E$ is only assumed to be measurable, the mapping is still locally
Lipschitz continuous as a mapping from $E$ to $\bbbr^{2n+1}$, and can be extended to a Lipschitz mapping 
from $\bbbr^k$ to $\bbbr^{2n+1}$. However, the validity of the contact equation can be guaranteed only on the set $E$, 
preventing the straightforward use of distributional derivatives. We overcome this substantial difficulty by 
replacing analytic methods with geometric arguments. 

Finally Theorem~\ref{pure k unrect} is deduced from Theorem~\ref{low rank lipschitz} by arguments related to the
proof of the Sard theorem and adapted to the metric structure of the Heisenberg group.
Connection to the Sard theorem should not be surprising: Theorem~\ref{low rank lipschitz}
implies that almost all points are critical with a strong estimate for the rank of the derivative.

The methods demonstrated in our proof of Theorem~\ref{low rank lipschitz} have other applications as well. 
In particular, they work well in the Sobolev category, and allow us to prove the following Theorem:

\begin{theorem}
\label{low rank} 
Let $k>n$, and let $\Omega$ be an open subset of $\bbbr^k$. 
If $f \in W^{1,1}_{\loc}(\Omega;\bbbr^{2n+1})$ satisfies the weak contact equation
\begin{equation} 
\label{Sob weak contact} 
\im \wk df_x \subset \ker \alpha(f(x)) \quad \text{for $\H^k$-almost every $x \in \Omega$},
\end{equation}
then for 
$\H^k$-almost every $x \in \Omega$, 
$$
\rank \wk df_x \leq n.
$$
\end{theorem}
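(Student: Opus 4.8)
The plan is to reduce Theorem~\ref{low rank} to the situation treated in the proof of Theorem~\ref{low rank lipschitz} by means of a Lusin-type approximation. Recall that a map in $W^{1,1}_{\loc}(\Omega;\bbbr^{2n+1})$ is approximately differentiable at $\H^k$-almost every point, and that there its approximate derivative coincides with the weak derivative $\wk df$. Consequently, by the classical $C^1$-approximation of Sobolev maps --- which relies only on this $\H^k$-a.e.\ approximate differentiability together with Whitney's extension theorem --- for every positive integer $j$ there are a closed set $F_j\subset\Omega$ and a map $g_j\in C^1(\bbbr^k;\bbbr^{2n+1})$ such that $\H^k(\Omega\setminus F_j)<1/j$ and $f=g_j$, $\wk df=dg_j$ on $F_j$. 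Since $\H^k\bigl(\Omega\setminus\bigcup_{j}F_j\bigr)=0$, it suffices, with $j$ henceforth fixed, to show that $\rank dg_j(x)\le n$ for $\H^k$-almost every $x\in F_j$.

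At $\H^k$-almost every $x\in F_j$ we have $\ap d(g_j|_{F_j})_x=dg_j(x)$, because restricting a $C^1$ map to a set of density one does not alter the derivative; since in addition $dg_j(x)=\wk df_x$ and $g_j(x)=f(x)$ there, the weak contact equation \eqref{Sob weak contact} for $f$ passes to
$$
\im\ap d(g_j|_{F_j})_x\subset\ker\alpha(g_j(x))\qquad\text{for $\H^k$-almost every $x\in F_j$}.
$$
Thus $g_j|_{F_j}\colon F_j\to\bbbr^{2n+1}$ is a locally Lipschitz map, defined on a merely measurable set, satisfying on $F_j$ precisely the weak contact equation \eqref{approx weak contact} that Lemma~\ref{3.2} produces in the Lipschitz setting. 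I would then invoke the argument of the proof of Theorem~\ref{low rank lipschitz} that deduces the rank bound from \eqref{approx weak contact}: after Lemma~\ref{3.2} has been used there, that argument requires only that the map be locally Lipschitz as an $\bbbr^{2n+1}$-valued map and that \eqref{approx weak contact} hold on the given measurable set. (One cannot quote Theorem~\ref{low rank lipschitz} directly, since a $C^1$ map into $\bbbr^{2n+1}$ that satisfies the contact equation only on $F_j$ need not be Lipschitz into $\bbbh^n$; the point is that Lemma~\ref{3.2} is the only place in that proof where the sub-Riemannian metric is used, and here we furnish its conclusion by hand.) Running that argument with $g_j|_{F_j}$ in place of $f$ and $F_j$ in place of $E$ yields $\rank dg_j(x)\le n$ for $\H^k$-almost every $x\in F_j$, and summing over $j$ finishes the proof.

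For orientation, here is the shape of the borrowed argument. Let $\pi\colon\bbbr^{2n+1}\to\bbbr^{2n}$ be the projection of \eqref{proj} and $\omega_0=\sum_{j=1}^{n}dx_j\wedge dy_j$ the standard symplectic form on $\bbbr^{2n}$, so that $d\alpha=4\,\pi^*\omega_0$. Since $\pi$ restricts to an isomorphism on each subspace $\ker\alpha(p)$, the contact equation \eqref{approx weak contact} forces $\pi$ to be injective on $\im dg_j(x)$, whence $\rank dg_j(x)=\rank d(\pi\circ g_j)_x$; it therefore suffices to show that $\im d(\pi\circ g_j)_x$ is an isotropic subspace of $(\bbbr^{2n},\omega_0)$, as such subspaces have dimension at most $n$. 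If this failed on a set of positive measure, pick a density point $x_0$ of $\{x:\rank dg_j(x)>n\}$ and vectors $u_0,v_0\in\bbbr^k$ with $\omega_0\bigl(d(\pi\circ g_j)_{x_0}u_0,\,d(\pi\circ g_j)_{x_0}v_0\bigr)=c\neq0$, and for small $r>0$ let $D=D_r$ be the $2$-dimensional square $x_0+r[0,1]u_0+r[0,1]v_0$ with boundary loop $\sigma=\partial D$. If $\sigma$ lies $\H^1$-almost entirely in $F_j$, then the tangent vectors of $g_j\circ\sigma$ lie in $\ker\alpha$ at $\H^1$-almost every point, whence $\int_{g_j\circ\sigma}\alpha=0$; on the other hand, by Stokes' theorem and $d\alpha=4\,\pi^*\omega_0$,
$$
\int_{g_j\circ\sigma}\alpha=\int_{D}g_j^{*}(d\alpha)=4\int_{D}(\pi\circ g_j)^{*}\omega_0=4cr^2+o(r^2),
$$
the last equality because $g_j\in C^1$. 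This forces $c=0$, a contradiction. The delicate point --- and the essential obstacle, already faced in Theorem~\ref{low rank lipschitz} --- is that the contact equation is available only on $F_j$, so it cannot be differentiated distributionally; instead one must produce, at \emph{every} small scale $r$, a square $D_r$ whose boundary lies $\H^1$-almost entirely in $F_j$, which is achieved by foliating a $k$-dimensional cube around $x_0$ by $2$-dimensional squares parallel to $u_0$ and $v_0$ and applying Fubini's theorem to the density-one property of $x_0$. The ingredients specific to the present theorem --- approximate differentiability of $W^{1,1}_{\loc}$ maps, the $C^1$-approximation, and the transfer of \eqref{Sob weak contact} to $F_j$ --- are routine; the hypothesis $k>n$ enters only to make the conclusion non-vacuous.
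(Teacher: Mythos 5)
Your reduction discards exactly the structure that makes the gap-handling possible, and the place where you claim the sub-Riemannian metric is not needed is precisely where it is. In the proof of Theorem~\ref{low rank lipschitz}, the Heisenberg metric is used not only through Lemma~\ref{3.2}: it enters essentially when $f|_{\partial B_0\cap K}$ is extended by Lemma~\ref{Jordan extension} to a map $F$ of the whole circle that is Lipschitz \emph{into $\bbbh^n$}, hence horizontal. It is only because the extended closed curve is horizontal that \eqref{use horiz} makes the total (recentered) symplectic integral vanish, while the gap contribution is controlled in \eqref{small int F} by $O(\eps r^2)$. Your map $g_j|_{F_j}$ is Lipschitz only into $\bbbr^{2n+1}$ and satisfies the contact equation only on $F_j$, so no horizontal extension over the gaps with controlled length is available, and the "borrowed argument" does not run as claimed.

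Your fallback sketch has the same quantitative defect. Density of $x_0$ in $F_j$ only yields, for suitable small squares $D_r$, that $\H^1(\partial D_r\setminus F_j)=o(r)$; it cannot give $\H^1(\partial D_r\setminus F_j)=0$, since $F_j$ may be nowhere dense. Off $F_j$ you control $g_j^*\alpha$ only by $O(1)$ (in particular the $dg_j^t$ term is not improved by recentering or left translation, and the contact equation says nothing about it off $F_j$), so all you obtain is $\bigl|\int_{g_j\circ\partial D_r}\alpha\bigr|\le C\,\H^1(\partial D_r\setminus F_j)=o(r)$, which does not contradict the Stokes computation $\int_{D_r}g_j^{*}d\alpha=4cr^{2}+o(r^{2})$, because $o(r)$ does not beat $cr^{2}$. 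The paper avoids this in two ways, either of which would repair your plan: its direct Sobolev proof never passes to a measurable subset but exploits that \eqref{Sob weak contact} holds a.e.\ on the \emph{open} set, so by Fubini the restriction of $f$ to almost every full circle $S(x_0,r)$ is an absolutely continuous horizontal loop (no bad arcs at all), with closeness to the ellipse supplied by Lemma~\ref{CZ}; alternatively, Theorem~\ref{bd_lip} provides a Lusin approximation by maps that are Lipschitz \emph{into $\bbbh^n$} (via the metric-space-valued Sobolev machinery of Proposition~\ref{new}), after which Theorem~\ref{low rank lipschitz} applies verbatim. Your Euclidean $C^1$ Lusin approximation furnishes neither, and this is a genuine gap, not a routine detail.
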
 

Here $W^{1,1}_{\loc}(\Omega;\bbbr^{2n+1})$ denotes the Sobolev space of locally integrable mappings 
$f \colon \Omega \to \bbbr^{2n+1}$ with a locally integrable weak derivative $\wk df$.  Magnani, \cite{magnani2}, proved Theorem~\ref{low rank} in the case $n=1$, $k=2$, and 
$f\in W^{1,p}(\Omega; \bbbr^{3})$ for some $p\geq 4/3$. His argument based on the notion of distributional Jacobian cannot be 
applied in the case $1\leq p<4/3$. This range was left as an open problem. Recently and independently, Theorem~\ref{low rank} was also proved by 
Magnani, Mal\'y, and Mongodi in~\cite{magnanimm}, using analytic methods. 

Remark~\ref{rem} applies to Theorem~\ref{low rank} as well. Moreover, the assumption of Sobolev regularity 
cannot be reduced to bounded variation, as shown in \cite{balogh2}.

We will also present yet another proof of Theorem~\ref{low rank}. 
Since Theorem~\ref{low rank} is local in nature, 
we may assume that $\Omega$ is an open ball, and that $f$ and $\wk df$ are integrable on this ball.
Thus, Theorem~\ref{low rank} follows directly from
Theorem~\ref{low rank lipschitz} and the following result.
\begin{theorem}
\label{bd_lip}
Let $\Omega$ be a bounded domain in $\bbbr^k$ with smooth boundary.
If $f \in W^{1,1}(\Omega;\bbbr^{2n+1})$ satisfies the weak contact equation \eqref{Sob weak contact},
then for any $\eps>0$ there is is a set $E_\eps$ such that $\H^k(\Omega\setminus E_\eps)<\eps$ and
$f|_{E_\eps}:E_\eps\to\bbbh^n$ is Lipschitz continuous.
\end{theorem}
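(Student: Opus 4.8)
The plan is to prove the following \emph{pointwise estimate adapted to the sub-Riemannian structure}: there is a constant $C=C(k,n)$ such that, denoting by $d_{\bbbh^n}$ the Carnot--Carath\'eodory distance and by $M$ the Hardy--Littlewood maximal operator (of a function extended by $0$ outside $\Omega$),
\begin{equation*}
d_{\bbbh^n}\big(f(x),f(y)\big)\ \le\ C\,|x-y|\,\big(M|\wk df|(x)+M|\wk df|(y)\big)
\end{equation*}
holds for $\H^k$-almost every pair $x,y\in\Omega$. Granting this, one sets
\begin{equation*}
E_\eps=\{x\in\Omega:\ M|\wk df|(x)\le\lambda,\ x\text{ is a Lebesgue point of }\wk df\text{ at which the pointwise estimate holds}\},
\end{equation*}
and since $|\wk df|\in L^1(\Omega)$ the weak $(1,1)$ bound gives $\H^k(\Omega\setminus E_\eps)\le C\lambda^{-1}\|\wk df\|_{L^1(\Omega)}$, which is $<\eps$ once $\lambda$ is large enough; on $E_\eps$ the displayed inequality reads $d_{\bbbh^n}(f(x),f(y))\le 2C\lambda|x-y|$, so that $f|_{E_\eps}\colon E_\eps\to\bbbh^n$ is Lipschitz. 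The smoothness of $\partial\Omega$ enters only softly here: it makes $\Omega$ a Sobolev extension domain (so that $M|\wk df|$ behaves well) and a quasiconvex domain, which reduces everything to the situation in which $x$ and $y$ can be joined inside $\Omega$ by a segment of length comparable to $|x-y|$ — as happens in particular when $\Omega$ is a ball, the only case needed for Theorem~\ref{low rank}.

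To prove the pointwise estimate, I would first pass to the Kor\'anyi gauge, which is bi-Lipschitz equivalent to $d_{\bbbh^n}$. Writing $f=(h,f_{2n+1})$ with $h=\pi\circ f\colon\Omega\to\bbbr^{2n}$ the horizontal part, and recalling that $\alpha$ determines, through the group law, an antisymmetric bilinear form $\beta$ on $\bbbr^{2n}$, one has
\begin{equation*}
d_{\bbbh^n}\big(f(x),f(y)\big)\ \asymp\ |h(x)-h(y)|\ +\ \big|\,f_{2n+1}(y)-f_{2n+1}(x)+\beta\big(h(x),h(y)\big)\,\big|^{1/2}.
\end{equation*}
The first term is controlled by the classical pointwise estimate for Sobolev functions, so the whole matter reduces to bounding the \emph{vertical defect} $V(x,y):=f_{2n+1}(y)-f_{2n+1}(x)+\beta(h(x),h(y))$ by $C\,|x-y|^2\big(M|\wk df|(x)+M|\wk df|(y)\big)^2$. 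The mechanism producing such a quadratic bound is an integration by parts along \emph{horizontal} curves: if $\gamma=(\gamma_h,\gamma_{2n+1})\colon[0,1]\to\bbbr^{2n+1}$ is absolutely continuous and $\gamma^*\alpha=0$ a.e., then — after a left translation placing $\gamma(0)=0$ — the contact relation $\gamma_{2n+1}'=-2\sum_j(\gamma_{x_j}\gamma_{y_j}'-\gamma_{y_j}\gamma_{x_j}')$ integrates, using $\int\gamma_{x_j}\gamma_{y_j}'=[\gamma_{x_j}\gamma_{y_j}]_0^1-\int\gamma_{x_j}'\gamma_{y_j}$, to an identity expressing $V(\gamma(0),\gamma(1))$ in terms of $\gamma_h$ and $\gamma_h'$ alone, whence
\begin{equation*}
\big|V(\gamma(0),\gamma(1))\big|\ \le\ C\Big(|\gamma_h(1)-\gamma_h(0)|^2+\operatorname{osc}(\gamma_h)\int_0^1|\gamma_h'|\Big)\ \le\ C\Big(\int_0^1|\gamma_h'|\Big)^{2}.
\end{equation*}
In words: a horizontal curve of small energy has a small vertical defect — precisely the phenomenon that makes Euclidean-Lipschitz horizontal curves Heisenberg-Lipschitz.

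The main obstacle is that the weak contact equation holds only $\H^k$-a.e.\ in $\Omega$: the straight segment $[x,y]$ need not be a horizontal curve for $f$, and — crucially — one cannot replace it by the ball averages of $f$ used in the usual proof of the Sobolev pointwise estimate, since such averages are not horizontal. One is therefore forced to integrate along \emph{genuine} horizontal curves for $f$. I would do this with a two-segment path through a generic intermediate point: fix a ball $B\subset\Omega$ of radius comparable to $|x-y|$ centered near the midpoint of $x$ and $y$, and consider the broken path $x\to w\to y$. By Fubini, for a.e.\ $w\in B$ the segments $[x,w]$ and $[w,y]$ each lie on a line along which $f$ is absolutely continuous and along which, again by Fubini applied to the weak contact equation, $f$ restricts to a horizontal curve; moreover, since for every $s\in(0,1]$ the dilate $x+s(B-x)$ is a ball contained in $B(x,Cs|x-y|)$ of radius $\asymp s|x-y|$, a single Fubini computation gives
\begin{equation*}
\frac{1}{|B|}\int_{B}\int_{[x,w]}|\wk df|\;dw\ \le\ C\,|x-y|\,M|\wk df|(x),
\end{equation*}
and symmetrically for $[w,y]$. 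Hence for a.e.\ $w\in B$ one may apply the integration-by-parts estimate on each of $[x,w]$ and $[w,y]$ with $\int|\gamma_h'|\le C|x-y|M|\wk df|(x)$, respectively $C|x-y|M|\wk df|(y)$, obtaining $d_{\bbbh^n}(f(x),f(w))\le C|x-y|M|\wk df|(x)$ and $d_{\bbbh^n}(f(w),f(y))\le C|x-y|M|\wk df|(y)$, so that the triangle inequality yields the pointwise estimate. The delicate point, and where essentially all the work lies, is the measure-theoretic bookkeeping behind this last paragraph: verifying that the set of admissible intermediate points $w$ — those for which both segments are simultaneously horizontal lines for $f$, of finite length, and of controlled energy — has full measure in $B$ for $\H^k$-a.e.\ pair $(x,y)$, and that the Lebesgue-point values of $f$ at $x$ and $y$ are the ones these curves actually see.
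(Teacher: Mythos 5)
Your route is genuinely different from the paper's. The paper isometrically embeds $\bbbh^n$ into $\ell^\infty$, shows (Proposition \ref{new}) that the weak contact equation makes $\kappa\circ f$ an $\ell^\infty$-valued Sobolev map whose metric derivative is dominated by $|\wk df|$, and then invokes a known Lusin-type Lipschitz approximation theorem for Banach-space-valued Sobolev maps; the point of the detour is exactly to make averages legitimate again (they exist in $\ell^\infty$), which is the obstruction you correctly identify in $\bbbh^n$. Your replacement of averages by genuine horizontal curves is plausible, and two of your ingredients are sound: the cone/Fubini computation bounding the average of $\int_{[x,w]}|\wk df|$ over $w$ by $C|x-y|M|\wk df|(x)$, and the ``small horizontal energy implies small vertical defect'' step (which is in fact immediate, since for a horizontal curve the Carnot--Carath\'eodory distance of the endpoints is at most the Euclidean length of its horizontal projection, as recalled in Section \ref{heisenberg}).

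However, the point you defer as ``measure-theoretic bookkeeping'' is a genuine gap, and the two-segment scheme as described does not close it. For a.e.\ $w$ the restriction of $f$ to $[x,w]$ is horizontal and absolutely continuous with integrable speed on the half-open segment, hence has a CC-limit $L_x(w)$ at the endpoint $x$; but nothing in your argument identifies $L_x(w)$ with $f(x)$. The Lebesgue-point property of $x$ gives Euclidean closeness of nearby values to $f(x)$ on average, and by \eqref{kor} Euclidean closeness controls $d_{cc}$ only up to a square root, which is useless at the scale $|x-y|\,M|\wk df|(x)$; bounding $d_{cc}(f(x),L_x(w))$ directly is essentially the inequality you are trying to prove, so as written the step is circular. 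A workable repair is to chain toward the endpoint: choose intermediate points $w_i$ in the balls $B(x,2^{-i}|x-y|)$ so that each segment $[w_{i+1},w_i]$ is horizontal with energy $\lesssim 2^{-i}|x-y|\,M|\wk df|(x)$ and additionally $|f(w_i)-f(x)|\to 0$ (possible at a Lebesgue point, since the admissible $w$'s have nearly full measure at every scale); then $(f(w_i))$ is CC-Cauchy, its limit must be $f(x)$ because the CC and Euclidean topologies coincide, and the geometric series gives the missing bound. This also repairs a second defect: you assert the pointwise estimate only for ``a.e.\ pair,'' whereas Lipschitz continuity of $f|_{E_\eps}$ requires it for \emph{all} pairs in one fixed set of nearly full measure, and a null set of bad pairs in $\Omega\times\Omega$ cannot simply be removed; the chained argument yields the estimate at every Lebesgue point with finite maximal function. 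Finally, your reduction of a general bounded smooth domain to the ball case is not correct as stated: quasiconvexity supplies curves, not segments, and even in a convex domain the intermediate ball near the midpoint need not lie in $\Omega$ for pairs close to the boundary, so the theorem in its stated generality (which the paper obtains because extension across $\partial\Omega$ is harmless for $\ell^\infty$-valued maps) is not reached by your argument, although the ball case does suffice for the application to Theorem \ref{low rank}.
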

Here we do {\em not} require $k>n$.  
The proof of this result is based on the methods of analysis on metric spaces and is very different in 
nature from other proofs presented in this paper. 
Combined with Theorem \ref{pure k unrect}, it also has the following corollary:

\begin{corollary}
\label{small image Sobolev} 
Assume the notation and hypotheses of Theorem \ref{low rank}. If in addition, the mapping $f$ satisfies the Lusin condition
\begin{equation}
\label{Nkk} 
\text{if $Z \subseteq \Omega$ and $\H^k_{\bbbr^k}(Z)=0$, then} \ \H^k_{\bbbh^n}(f(Z))=0,
\end{equation}
then $\H^k_{\bbbh^n}(f(\Omega))=0$. 
\end{corollary}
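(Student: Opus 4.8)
The plan is to deduce the corollary formally from the three preceding theorems: Theorem~\ref{bd_lip} provides, up to an arbitrarily small set, a Lipschitz piece; Theorem~\ref{pure k unrect} kills the $\H^k_{\bbbh^n}$-measure of each such piece because $k>n$; and the Lusin hypothesis~\eqref{Nkk} is used once at the very end to absorb the remaining null set.

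First I would reduce to a bounded smooth domain. Exhaust $\Omega$ by an increasing sequence of bounded domains $\Omega_i \Subset \Omega$ with smooth boundary and $\bigcup_i \Omega_i = \Omega$. Since $f \in W^{1,1}_{\loc}(\Omega;\bbbr^{2n+1})$, each restriction $f|_{\Omega_i}$ belongs to $W^{1,1}(\Omega_i;\bbbr^{2n+1})$, and the weak contact equation~\eqref{Sob weak contact}, being a pointwise almost everywhere condition on $\wk df$, persists on $\Omega_i$. By countable subadditivity $\H^k_{\bbbh^n}(f(\Omega)) \leq \sum_i \H^k_{\bbbh^n}(f(\Omega_i))$, so it suffices to prove $\H^k_{\bbbh^n}(f(\Omega_i)) = 0$ for every $i$.

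Fix $i$. For each $j \in \en$, apply Theorem~\ref{bd_lip} to $\Omega_i$ with $\eps = 1/j$ to obtain a measurable set $E_{i,j} \subseteq \Omega_i$ with $\H^k(\Omega_i \setminus E_{i,j}) < 1/j$ and such that $f|_{E_{i,j}} \colon E_{i,j} \to \bbbh^n$ is Lipschitz. Since $k > n$, Theorem~\ref{pure k unrect} gives $\H^k_{\bbbh^n}(f(E_{i,j})) = 0$. Put $E_i := \bigcup_{j} E_{i,j}$. Then $\H^k(\Omega_i \setminus E_i) \leq \H^k(\Omega_i \setminus E_{i,j}) < 1/j$ for all $j$, hence $\H^k(\Omega_i \setminus E_i) = 0$, while by countable subadditivity $\H^k_{\bbbh^n}(f(E_i)) \leq \sum_j \H^k_{\bbbh^n}(f(E_{i,j})) = 0$. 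Now the Lusin condition~\eqref{Nkk}, applied to the $\H^k_{\bbbr^k}$-null set $Z = \Omega_i \setminus E_i$, yields $\H^k_{\bbbh^n}(f(\Omega_i \setminus E_i)) = 0$. Therefore $\H^k_{\bbbh^n}(f(\Omega_i)) \leq \H^k_{\bbbh^n}(f(E_i)) + \H^k_{\bbbh^n}(f(\Omega_i \setminus E_i)) = 0$, which completes the argument.

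I do not expect a genuine obstacle in this corollary: everything is a bookkeeping combination of results already available. The one point that deserves attention, rather than difficulty, is the role of hypothesis~\eqref{Nkk}: Theorem~\ref{bd_lip} only strips away an arbitrarily small—but not null—exceptional set, and it deliberately does not assume $k>n$, so the pure unrectifiability of Theorem~\ref{pure k unrect} can only be exploited on the Lipschitz pieces. Consequently the image of the leftover null set $\Omega_i\setminus E_i$ cannot be controlled by the contact structure alone, and the Lusin condition is exactly what is needed to guarantee it has vanishing $\H^k_{\bbbh^n}$-measure; without it the conclusion fails in general.
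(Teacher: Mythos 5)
Your argument is correct and follows essentially the same route as the paper: exhaust $\Omega$ by bounded smooth subdomains, use Theorem~\ref{bd_lip} to produce Lipschitz pieces exhausting each subdomain up to a null set, apply Theorem~\ref{pure k unrect} to each piece, and invoke the Lusin condition~\eqref{Nkk} only for the leftover null set. The only differences are cosmetic (the paper exhausts by balls and phrases the good sets via a decreasing intersection of exceptional sets rather than an increasing union of Lipschitz pieces).
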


On the other hand, the following can be deduced from \cite{hajlaszt} and \cite[Proposition~6.8]{DHLT}:
\begin{example}\label{critical space fill}
For each integer $k\geq 2$,  there is a bounded and continuous mapping $f \in W^{1,k}(B_{\bbbr^k}(0,1);\bbbr^{2n+1})$ that satisfies the 
weak contact equation \eqref{Sob weak contact} and whose image contains an open set in $\bbbh^n$.
\end{example} 

Corollary \ref{small image Sobolev} implies that when $k \leq 2n+2$, a mapping as in Example \ref{critical space fill} cannot satisfy the Lusin condition \eqref{Nkk}. However, in the presence of even slightly better Sobolev regularity, the condition \eqref{Nkk} is guaranteed; see, e.g., \cite{EucPeano}. 
This implies the following result, which can be considered as a Sobolev version of Theorem~\ref{pure k unrect}:

\begin{corollary}
\label{super critical} 
Let $k>n$, and let $\Omega$ be an open 
subset of $\bbbr^k$.  If $p>k$ and $f \in W^{1,p}_{\loc}(\Omega;\bbbr^{2n+1})$ 
satisfies the weak contact equation \eqref{Sob weak contact}, then 
$$
\H^k_{\bbbh^n}(f(\Omega))=0.
$$ 
\end{corollary}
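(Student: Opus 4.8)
The plan is to reduce Corollary~\ref{super critical} to Corollary~\ref{small image Sobolev}: since $f\in W^{1,p}_{\loc}\subset W^{1,1}_{\loc}$ and satisfies~\eqref{Sob weak contact}, it suffices to check that $f$ automatically satisfies the Lusin condition~\eqref{Nkk}. (Equivalently one may bypass Corollary~\ref{small image Sobolev}: Theorem~\ref{bd_lip} exhausts $\Omega$, up to a Lebesgue-null set $N$, by countably many sets on which $f$ is Lipschitz into $\bbbh^n$; Theorem~\ref{pure k unrect} kills the $\H^k_{\bbbh^n}$-measure of the image of each such piece, and~\eqref{Nkk} kills $\H^k_{\bbbh^n}(f(N))$.) So the whole statement reduces to the claim: \emph{if $f\in W^{1,p}_{\loc}(\Omega;\bbbr^{2n+1})$ with $p>k$ satisfies~\eqref{Sob weak contact}, then $\H^k_{\bbbr^k}(Z)=0$ implies $\H^k_{\bbbh^n}(f(Z))=0$.} For Euclidean-valued supercritical Sobolev maps this is the classical Lusin condition $(N)$; the Heisenberg point is that the weak contact equation is exactly what lets one transfer the oscillation control to the Carnot--Carath\'eodory metric (see also \cite{EucPeano}), and that is where I expect the real work to be.

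For that claim I would first prove a Morrey-type oscillation estimate in the CC-metric. Fix a ball $B=B(x,r)$ with $\overline{B(x,2r)}\subset\Omega$. Since $p>k$, Morrey's embedding gives $f$ a continuous representative, so $K:=f(\overline{B(x,2r)})$ is compact in $\bbbr^{2n+1}$. On $K$ the Euclidean norm restricted to $\ker\alpha$ and the Carnot--Carath\'eodory norm on the horizontal bundle are comparable, hence every horizontal curve $\gamma$ with image in $K$ satisfies $\ell_{cc}(\gamma)\le C(K)\,\ell_E(\gamma)$. Next, by Fubini together with the ACL-property of Sobolev functions, for a.e.\ segment $\sigma\colon[a,b]\to B$ parallel to a coordinate axis the curve $f\circ\sigma$ is absolutely continuous, and — intersecting the ACL exceptional set with the $\H^k_{\bbbr^k}$-null exceptional set of~\eqref{Sob weak contact} — the contact equation holds $\H^1$-a.e.\ along $\sigma$, so that $(f\circ\sigma)'(s)=\wk df_{\sigma(s)}(\sigma'(s))\in\ker\alpha(f(\sigma(s)))$ for a.e.\ $s$. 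Thus $f\circ\sigma$ is a horizontal curve in $K$ and
$$
d_{cc}\bigl(f(\sigma(a)),f(\sigma(b))\bigr)\ \le\ \ell_{cc}(f\circ\sigma)\ \le\ C(K)\int_\sigma|\wk df|\,d\H^1 .
$$
Joining two arbitrary points of $B$ by a chain of such admissible segments and integrating over the family of segments exactly as in the classical proof of Morrey's inequality (H\"older's inequality, $p>k$) yields
$$
\diam_{cc}f(B(x,r))\ \le\ C\,r^{1-k/p}\Bigl(\int_{B(x,2r)}|\wk df|^p\Bigr)^{1/p}.
$$

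With this estimate in hand, \eqref{Nkk} follows by a standard covering argument (after localizing $Z$ into a ball with compact closure in $\Omega$, so that the constant above is uniform). Given such $Z$ with $\H^k_{\bbbr^k}(Z)=0$ and $\eps>0$, cover $Z$ by balls $B(x_i,r_i)$ with $\overline{B(x_i,2r_i)}\subset\Omega$, with the dilates $\{B(x_i,2r_i)\}$ of bounded overlap, with $\sum_i r_i^k<\eps$, and with $\H^k_{\bbbr^k}\bigl(\bigcup_i B(x_i,2r_i)\bigr)$ small enough that $\int_{\bigcup_i B(x_i,2r_i)}|\wk df|^p<\eps$; this is possible because $|\wk df|^p\in L^1_{\loc}$. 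Raising the oscillation estimate to the $k$-th power and using the elementary inequality $a^{1-\theta}b^{\theta}\le a+b$ with $\theta=k/p$ gives
$$
\sum_i\bigl(\diam_{cc}f(B(x_i,r_i))\bigr)^k\ \le\ C\sum_i\Bigl(r_i^k+\int_{B(x_i,2r_i)}|\wk df|^p\Bigr)\ \le\ C'\eps .
$$
Since $\{f(B(x_i,r_i))\}_i$ covers $f(Z)$, letting $\eps\to0$ gives $\H^k_{\bbbh^n}(f(Z))=0$; Corollary~\ref{small image Sobolev} now finishes the proof.

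The main obstacle, as the above indicates, is not the covering combinatorics but this passage to the CC-metric: in general a $W^{1,p}$ map with $p>k$ may send a null set onto a set of positive $\H^k_{\bbbh^n}$-measure (consider maps onto vertical segments, whose Heisenberg Hausdorff dimension is strictly larger than their Euclidean one), and it is precisely the weak contact equation — via the observation that a.e.\ coordinate segment has a \emph{horizontal} image, whose CC-length is therefore controlled by its Euclidean length — that excludes this. The delicate bookkeeping is making ``a.e.\ segment has horizontal image'' rigorous by combining the ACL property with a Fubini argument applied to the $\H^k_{\bbbr^k}$-null exceptional set of~\eqref{Sob weak contact}.
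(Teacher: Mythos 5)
Your proposal is correct, but at the one point where real work is needed it takes a genuinely different route from the paper. Both arguments reduce Corollary~\ref{super critical} to Corollary~\ref{small image Sobolev} by verifying the Lusin condition \eqref{Nkk}. The paper does this abstractly: Proposition~\ref{new} upgrades $f$ to a metric-space-valued Sobolev map $f\in W^{1,p}(\Omega;\bbbh^n)$ (via an isometric embedding of $\bbbh^n$ into $\ell^\infty$), and the Lusin property is then quoted from \cite[Theorem~1.3]{EucPeano}. You instead prove \eqref{Nkk} by hand: the ACL property combined with a Fubini argument on the exceptional set of \eqref{Sob weak contact} shows that a.e.\ coordinate segment $\sigma$ has horizontal image, whence $d_{cc}(f(\sigma(a)),f(\sigma(b)))\le\int_\sigma|\wk df|\,d\H^1$ --- note you do not even need the compactness constant $C(K)$, since for horizontal curves $\ell_{\bbbh}\le \ell_{E}$ with constant $1$, as recorded in Section~\ref{heisenberg}; this segment estimate is exactly the pointwise inequality underlying the paper's Proposition~\ref{new}. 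From it you derive a Morrey-type oscillation estimate in the Carnot--Carath\'eodory metric and run the standard covering/Young-inequality argument for condition $(N)$ of supercritical Sobolev maps. What your route buys is a self-contained, elementary proof that avoids $\ell^\infty$-valued Sobolev spaces and the external citation; what the paper's route buys is brevity and sharpness (the quoted result holds under Lorentz-type hypotheses weaker than $p>k$). Two small points to tidy in a full write-up: the chaining ``as in Morrey's inequality'' directly gives the estimate only for a.e.\ pair of endpoints, so pass to arbitrary points using continuity of $f$ into $\bbbh^n$ (which holds because $d_{cc}$ is topologically equivalent to the Euclidean metric); and summing $\int_{B(x_i,2r_i)}|\wk df|^p$ over the cover requires the bounded overlap you postulate, e.g.\ via a Besicovitch-type selection of the covering balls.
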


In Corollary \ref{super critical}, it is important that the weak contact condition hold almost everywhere in the \emph{open} set $\Omega$, as the following example from \cite{Crelles} shows:

\begin{example}\label{cantor} There is a continuously differentiable mapping $f \colon \bbbr^2 \to \bbbr^3$ and a set $E \subeq \bbbr^2$ such that 
$$\im \ df_x \subeq \ker \alpha(f(x)) \ \text{for all $x \in E$} $$
and $\H^2_{\bbbh^1}(f(E))>0$.
\end{example} 

As a final application of our method, we prove a result related to the following theorem of Gromov, proven in \cite[Corollary~3.1.A]{gromov}.
\begin{theorem}
\label{Gromov theorem} Let $0<\alpha \leq 1$, and let $k>n$. 
Every $\alpha$-H\"older continuous embedding $f:\bbbr^k\to\bbbh^n$ satisfies $\alpha\leq\frac{n+1}{n+2}$.
\end{theorem}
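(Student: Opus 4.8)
The plan is to derive the bound from three ingredients: a dimension-distortion estimate for H\"older maps, Szpilrajn's inequality between topological and Hausdorff dimension, and a comparison between the Euclidean and Carnot--Carath\'eodory Hausdorff dimensions of subsets of $\bbbh^n$.

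\emph{Reduction to $k=n+1$.} The conclusion $\alpha\le\tfrac{n+1}{n+2}$ does not involve $k$, and the restriction of an embedding to any subset is again an embedding; so if $V\subset\bbbr^k$ is an affine $(n+1)$-plane (this uses only $k\ge n+1$, i.e.\ the hypothesis $k>n$), then $f|_V\colon V\to\bbbh^n$ is still an $\alpha$-H\"older embedding, and it suffices to treat $f\colon\bbbr^{n+1}\to\bbbh^n$. Fix the closed unit ball $\overline{B}\subset\bbbr^{n+1}$ and put $K=f(\overline{B})$. Since $f$ is an embedding, $K$ is homeomorphic to $\overline{B}$, so its topological (covering) dimension equals $n+1$.

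\emph{The three estimates.} Write $\dim_{\mathcal H}$ for Hausdorff dimension computed with the Euclidean metric on $\bbbr^{2n+1}$ and $\dim_{cc}$ for Hausdorff dimension computed with the Carnot--Carath\'eodory metric. (a) By Szpilrajn's theorem, $\dim_{\mathcal H}K\ge\dim_{\mathrm{top}}K=n+1$. (b) The geometric heart of the matter: \emph{every $A\subset\bbbh^n$ with $\dim_{\mathcal H}A\ge n+1$ satisfies $\dim_{cc}A\ge n+2$} --- equivalently, a subset of $\bbbh^n$ of Carnot--Carath\'eodory Hausdorff dimension below $n+2$ has topological dimension at most $n$. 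Applied to $A=K$, together with (a), this gives $\dim_{cc}K\ge n+2$. (c) Since $f$ is $\alpha$-H\"older from $\overline{B}\subset\bbbr^{n+1}$ into $(\bbbh^n,d_{cc})$ and $\dim_{\mathcal H}\overline{B}=n+1$, the standard distortion bound for H\"older maps gives $\dim_{cc}K\le\tfrac{n+1}{\alpha}$. Combining (b) and (c): $\tfrac{n+1}{\alpha}\ge n+2$, that is $\alpha\le\tfrac{n+1}{n+2}$.

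\emph{The main obstacle.} Everything above is soft except step (b), the Euclidean/Carnot--Carath\'eodory dimension comparison; this is exactly where the non-integrability of the horizontal distribution must be exploited. For \emph{Lipschitz} images the analogous statement is immediate from the sharp rank bound of Theorem~\ref{low rank lipschitz} --- a Lipschitz embedding $\bbbr^{n+1}\to\bbbh^n$ cannot exist, since its approximate derivative has rank $\le n$ almost everywhere --- but a H\"older embedding produces a typically nowhere rectifiable image for which there is no derivative to linearize. One therefore has to obtain (b) by a Besicovitch-type covering argument tailored to the anisotropic balls of $\bbbh^n$: up to a left translation, $B_{cc}(p,r)$ is a Euclidean box with $2n$ edges of length $\simeq r$ and one edge of length $\simeq r^2$, and the point is that such boxes cannot cover a topologically $(n+1)$-dimensional set too economically. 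The extremal configurations are modelled on the vertical subgroup $\mathbb{W}\cong\bbbr^{n+1}$, which has topological dimension $n+1$ but Carnot--Carath\'eodory Hausdorff dimension $n+2$; this is what produces the exponent $\tfrac{n+1}{n+2}$, and (b) asserts that no set improves on this model. In the write-up I would either invoke the dimension-comparison theorems for Heisenberg groups or isolate and prove the single inequality ``$\dim_{\mathcal H}A\ge n+1\Rightarrow\dim_{cc}A\ge n+2$'', which is the one genuinely delicate point; the rest of the argument is routine. (Gromov's original proof of Corollary~3.1.A in \cite{gromov} instead runs through his microflexibility/$h$-principle machinery, which yields a considerably more general statement.)
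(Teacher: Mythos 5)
Your skeleton (restrict to an affine $(n+1)$-plane, Szpilrajn, H\"older distortion of Hausdorff dimension) is fine, but the central step (b) is false as stated, and the ``equivalent'' reformulation you give is not equivalent to it. The sharp lower dimension-comparison function for $\bbbh^n$ (Balogh--Rickly--Serra Cassano for $n=1$, Balogh--Tyson--Warhurst in general) is $\beta_-(\alpha)=\max(\alpha,2\alpha-2n)$, so a set of \emph{Euclidean} Hausdorff dimension $n+1$ is only forced to have Carnot--Carath\'eodory dimension $n+1$, and this is attained: there are compact ``horizontal fractal'' sets (cf.\ the constructions in \cite{balogh2} and the sharpness examples for $\beta_-$) with $\dim_E A=\dim_{cc}A=n+1<n+2$. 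Thus invoking ``the dimension-comparison theorems for Heisenberg groups'' cannot close your argument --- those theorems refute your claim (b) rather than supply it --- and with the correct comparison your steps (a) and (c) only yield $\dim_{cc}K\geq n+1$, i.e.\ $\alpha\leq 1$, which is vacuous. Your heuristic that anisotropic Kor\'anyi boxes ``cannot cover a topologically $(n+1)$-dimensional set too economically'' conflates Hausdorff and topological dimension: economical coverings do exist for some $(n+1)$-Hausdorff-dimensional sets, so no covering argument working at the level of Hausdorff measure alone can succeed.

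What your argument actually needs is the topological-dimension statement: every $A\subset\bbbh^n$ with $\dim_{\mathrm{top}}A\geq n+1$ has $\dim_{cc}A\geq n+2$. This is strictly weaker than your (b) (since $\dim_{\mathrm{top}}\leq\dim_E$, (b) would imply it, not conversely), and it is precisely the hard content of Gromov's theorem on Hausdorff dimensions of subsets of prescribed topological dimension in Carnot--Carath\'eodory spaces; any proof must exploit topological nondegeneracy (e.g.\ via vertical projections and dimension theory), which is a substantially different and harder argument than the one you sketch. Note also that the present paper does not prove Theorem~\ref{Gromov theorem} at all --- it quotes Corollary~3.1.A of \cite{gromov} --- and proves instead the weaker Theorem~\ref{Lip Gromov}, for maps that are additionally Euclidean-Lipschitz, by reduction to the unrectifiability Theorem~\ref{pure k unrect}; as you yourself observe, that route is unavailable for genuinely H\"older maps. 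So, as written, the proposal has a genuine gap at its decisive step.
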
 
A well-known conjecture attributed to Gromov states that in the above theorem, it is actually the case that $\alpha\leq 1/2$. A number of tools for attacking this problem have recently been introduced to the literature, such as \cite{Zust} and \cite{EnZust}.

Using Theorem~\ref{pure k unrect}, we confirm this conjecture for mappings that are 
additionally assumed to be locally Lipschitz with respect to the Euclidean metric in $\bbbr^{2n+1}$, as we now describe.

Let $0<\alpha<1$. We say that
a mapping $f \colon X \to Y$ between metric spaces is in the class $C^{0,\alpha+}(X;Y)$ if there is a non-decreasing continuous function $\beta \colon [0,\infty) \to [0,\infty)$ satisfying $\beta(0)=0$ such that for all $x,y \in X$,
$$
d_Y(f(x),f(y)) \leq d_X(x,y)^{\alpha}\beta(d_X(x,y)).
$$

\begin{theorem}
\label{Lip Gromov} 
Let $k>n$ be positive integers, and let $\Omega$ be an open subset of $\bbbr^k$. Then there is no injective mapping in the class $C^{0,\frac{1}{2}+}(\Omega;\bbbh^n)$ that is also locally Lipschitz when considered as a map into $\bbbr^{2n+1}$.
\end{theorem}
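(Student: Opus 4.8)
The plan is to show that the two regularity assumptions force $f$ to satisfy the weak contact equation almost everywhere, to feed this into Theorems~\ref{low rank}, \ref{bd_lip} and~\ref{pure k unrect} in order to conclude that $\H^k_{\bbbh^n}(f(\Omega))=0$, and finally to observe that this is incompatible with injectivity. I expect the third ingredient — promoting ``almost everywhere'' to ``everywhere'', i.e.\ a Lusin-type estimate — to be the real difficulty.

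\emph{Step 1: $f$ satisfies \eqref{Sob weak contact} a.e.} Since $f$ is locally Lipschitz into $\bbbr^{2n+1}$, it lies in $W^{1,1}_{\loc}(\Omega;\bbbr^{2n+1})$ and is differentiable at $\H^k$-almost every $x$; at such a point the classical differential $Df_x$ coincides with $\wk df_x$ and with $\ap df_x$. To see $\im\wk df_x\subset\ker\alpha(f(x))$, left-translate so that $f(x)=0$, whence $\ker\alpha(0)=\{t=0\}$. Writing $f(x+rv)=rDf_x(v)+o(r)$ and using the Heisenberg group law, the last coordinate of $f(x)^{-1}f(x+rv)$ equals $\alpha(f(x))(Df_x(v))\,r+o(r)$; on the other hand its absolute value is bounded by $C\,d_{\bbbh^n}(f(x),f(x+rv))^2\le C\,r|v|\,\beta(r|v|)^2=o(r)$ because $\beta(0)=0$. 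Dividing by $r$ and letting $r\to 0$ gives $\alpha(f(x))(Df_x(v))=0$ for every $v$. This is exactly where the improvement $C^{0,\frac12+}$ over $C^{0,\frac12}$ is essential: a bare $\tfrac12$-H\"older bound only controls the vertical displacement by $O(r)$. Now Theorem~\ref{low rank} and Remark~\ref{rem} yield $\rank\wk df_x\le n$ and the isotropy of $\im\wk d(\pi\circ f)_x$ for a.e.\ $x$; in particular, viewed in $\bbbr^{2n+1}$, $f$ has Jacobian rank $<k$ a.e., so the area formula already gives $\H^k_{\bbbr^{2n+1}}(f(\Omega))=0$.

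\emph{Step 2: $\H^k_{\bbbh^n}(f(\Omega))=0$.} The statement is local, so I may assume $\Omega$ is a bounded ball and $f\in W^{1,1}(\Omega;\bbbr^{2n+1})$. By Theorem~\ref{bd_lip} there are closed sets $E_j\subset\Omega$ with $\H^k(\Omega\setminus E_j)<1/j$ and $f|_{E_j}\colon E_j\to\bbbh^n$ Lipschitz, and then $\H^k_{\bbbh^n}(f(E_j))=0$ by Theorem~\ref{pure k unrect}. Setting $E=\bigcup_j E_j$ and $Z=\Omega\setminus E$, this gives $\H^k_{\bbbr^k}(Z)=0$ and $\H^k_{\bbbh^n}(f(E))=0$, so everything reduces to proving $\H^k_{\bbbh^n}(f(Z))=0$, equivalently that $f$ obeys Lusin's condition $(N^k)$ into $\bbbh^n$. \textbf{This is the main obstacle.} It does not follow from the $\tfrac12$-H\"older bound alone — covering $Z$ by balls of radii $r_i$ with $\sum r_i^k$ small only yields $\H^k_{\bbbh^n}(f(Z))\lesssim\sum r_i^{k/2}\beta(r_i)^k$, and $\sum r_i^{k/2}$ is not controlled by $\H^k_{\bbbr^k}(Z)$; nor does Lipschitz continuity into $\bbbr^{2n+1}$ alone suffice, since the central vertical segment of $\bbbh^n$ is $\H^k_{\bbbr^{2n+1}}$-null yet has positive $\H^k_{\bbbh^n}$-measure. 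One has to use both hypotheses together with the contact structure: near a point of approximate differentiability, the weak contact equation together with the isotropy from Remark~\ref{rem} forces $f$ to send a ball $B(x,r)$ into an $o(r)$-Euclidean neighbourhood of an $n$-dimensional \emph{horizontal} affine subspace, while the $C^{0,\frac12+}$-bound controls the vertical part of this neighbourhood by $o(r)$ in Carnot--Carath\'eodory terms; such a thin, almost-flat, almost-horizontal slab can be covered by Heisenberg balls efficiently enough — here $k>n$ is crucial, producing a decay factor $\varepsilon^{k-n}$ exactly as in the Sard-type proof of Theorem~\ref{pure k unrect} — that the resulting content estimate is summable and tends to $0$. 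Running this argument over $Z$ gives $\H^k_{\bbbh^n}(f(Z))=0$, hence $\H^k_{\bbbh^n}(f(\Omega))=0$.

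\emph{Step 3: contradiction with injectivity.} Suppose $f$ were injective. Pick a closed cube $\overline Q\subset\Omega$; then $f|_{\overline Q}$ is a homeomorphism onto the compact set $Y:=f(\overline Q)\subset\bbbh^n$, so $Y$ is homeomorphic to $[0,1]^k$. For $i=1,\dots,k$ let $F_i^{-},F_i^{+}\subset Y$ be the images of the opposite faces $\{x_i=0\}$ and $\{x_i=1\}$; these are disjoint compacta, so $\delta_i:=d_{\bbbh^n}(F_i^-,F_i^+)>0$. The map $\Phi\colon Y\to\bbbr^k$, $\Phi(y)=\bigl(d_{\bbbh^n}(y,F_1^-),\dots,d_{\bbbh^n}(y,F_k^-)\bigr)$, is $\sqrt k$-Lipschitz, and, composed with the homeomorphism $[0,1]^k\to Y$, its $i$-th component is $\le 0$ on $\{x_i=0\}$ and $\ge\delta_i$ on $\{x_i=1\}$; by the Poincar\'e--Miranda theorem, applied to $\Phi-c$ for each $c$ in the open box $\prod_i(0,\delta_i)$, the image $\Phi(Y)$ contains that box. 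Hence $\H^k_{\bbbh^n}(Y)\ge k^{-k/2}\,\H^k(\Phi(Y))\ge k^{-k/2}\prod_i\delta_i>0$, contradicting $\H^k_{\bbbh^n}(f(\Omega))=0$. Therefore no such injective $f$ exists. Among the three steps, Step~1 and Step~3 are routine given the results already established; the weight of the proof lies in the Lusin-type estimate of Step~2.
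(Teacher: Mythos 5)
Your Step 1 is correct and is essentially the paper's Proposition~\ref{contact}, and your Step 3 (distance-to-faces map plus Poincar\'e--Miranda) is a valid, self-contained replacement for the paper's topological-dimension argument. The problem is Step 2, and you have named it yourself: the Lusin-type estimate $\H^k_{\bbbh^n}(f(Z))=0$ for the residual null set $Z$ is asserted, not proved. The sketch you give does not work as stated: at points of $Z$ you have no approximate differentiability, no weak contact information, and no isotropy -- all of those statements hold only almost everywhere, and $Z$ is precisely the exceptional set. You cannot borrow them from nearby points of $E$ either, since continuity alone does not transfer a first-order ``thin slab'' estimate on $B(x,r)$ to the points of $Z$ inside it; and the claimed covering gain $\varepsilon^{k-n}$ is never derived. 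That this gap is genuine and not cosmetic is illustrated by Example~\ref{critical space fill} and Corollary~\ref{small image Sobolev}: Sobolev regularity plus the a.e.\ weak contact equation does \emph{not} imply the Lusin condition \eqref{Nkk}, which is why the corollary must assume it. So your route through Theorem~\ref{bd_lip} reduces the theorem to a statement that your hypotheses must supply but your argument does not.

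The missing ingredient is exactly the paper's Proposition~\ref{get Heis Lip}, which makes the whole detour unnecessary: if $f$ is Lipschitz into $\bbbr^{2n+1}$ on a cube and its classical derivative is horizontal a.e.\ (your Step 1 plus Rademacher), then $f$ is Lipschitz \emph{into $\bbbh^n$} on the whole cube. The proof is elementary: by Fubini, almost every segment parallel to a coordinate axis is mapped to a horizontal curve whose Carnot--Carath\'eodory speed is comparable to its Euclidean speed, hence bounded by $CL$; an arbitrary such segment is a limit of these, and uniform limits of $CL$-Lipschitz curves into $\bbbh^n$ are $CL$-Lipschitz, so $f$ is $CL$-Lipschitz on all coordinate segments and hence on the cube. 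Once $f$ itself is Lipschitz into $\bbbh^n$, Theorem~\ref{pure k unrect} applies directly to give $\H^k_{\bbbh^n}(f(\Omega))=0$ with no Lusin condition and no appeal to Theorems~\ref{low rank} or~\ref{bd_lip}, and your Step 3 (or the paper's dimension argument) finishes the proof. As written, your proposal's central step remains an unproved claim, so the argument is incomplete.
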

This result is also related to an example constructed in~\cite{hajlaszm}.
A brief sketch of the proof of Theorem~\ref{Lip Gromov} is as follows; we postpone the 
full proof to Section~\ref{gromov conjecture}.  
Theorem~\ref{Lip Gromov} follows from Theorem~\ref{pure k unrect}, Rademacher's theorem, and two 
elementary facts. Rademacher's theorem states that a locally Lipschitz mapping between open subsets 
of Euclidean spaces is classically differentiable almost everywhere. The first elementary fact needed, 
Proposition~\ref{contact} below, implies that if a map $f \in C^{0,(1/2)+}(\Omega;\bbbh^n)$ is also 
locally Lipschitz as a map into $\bbbr^{2n+1}$, then for $\H^k$-almost every point $x \in \Omega$, the image of the classical derivative $df_x$ is contained in the horizontal tangent space ${\rm H}_{f(x)}\bbbh^n$, i.e., 
$f$ satisfies the classical contact equation almost everywhere. The second fact, Proposition~\ref{get Heis Lip} 
below, shows that this property implies that $f \colon \Omega \to \bbbh^n$ is locally Lipschitz. 
Theorem~\ref{pure k unrect} now yields that $\H^k_{\bbbh^n}(f(\Omega))=0$, from which it follows that $f$ cannot be an embedding.

The paper is organized as follows. In Section~\ref{AD} we provide basic definitions and results about approximate differentiability.
Section~\ref{heisenberg} is an introduction to the geometry of the Heisenberg group and contains all the basic definitions and results
that are used in the sequel. We also prove here that Lipschitz mappings into the Heisenberg group satisfy the 
weak contact equation \eqref{approx weak contact}. 
In Section~\ref{SF} we study elementary symplectic linear algebra as it relates to the Heisenberg group, and construct isometries of the Heisenberg group generated by Euclidean isometries between isotropic subspaces.
Section~\ref{RD} contains the proofs of Theorems~\ref{low rank lipschitz} and~\ref{low rank}, which are similar in nature. 
In Section~\ref{ih}, we prove Theorem~\ref{bd_lip}, giving a second proof of Theorem~\ref{low rank}.
We also prove here Corollaries~\ref{small image Sobolev} and \ref{super critical}.
Section~\ref{main} contains the proof of Theorem~\ref{pure k unrect}.
The arguments used here are closely related to the proof of Sard's theorem. The final Section~\ref{gromov conjecture}
contains the proof of Theorem~\ref{Lip Gromov}. 

Our notation throughout is fairly standard. By $C$ will denote a positive constant whose value may change in a single string of estimates.
The integral average will be denoted by
$$
\barint_E f\, d\mu = \mu(E)^{-1}\int_E f\, d\mu.
$$
The $\alpha$-dimensional Hausdorff measure in $\bbbr^k$ and $\bbbh^n$ will be denoted by
$\H^\alpha$ and $\H^\alpha_{\bbbh^n}$ respectively.
However, in order to avoid confusion, the Hausdorff measure in $\bbbr^{2n+1}$ with respect to the 
Euclidean metric will occasionally also be denoted by $\H^\alpha_{\bbbr^{2n+1}}$ to clearly distinguish it from
$\H^\alpha_{\bbbh^n}$.
Other notation will be explained as it arises.

\section{Approximate differentiability}
\label{AD}

Let $E \subset \bbbr^k$ be a measurable set. We say that a function $f \colon E \to \bbbr$ is
\emph{approximately differentiable at almost every point of $E$} if for every $\eps>0$, 
there is a set $K \subset E$ such that $\H^k(E\setminus K) < \eps$ 
and $g\in C^1(\bbbr^k)$ that agrees with $f$ on $K$. At any $\H^k$-density point $x$ of the set $K$, we define the 
\emph{approximate derivative of $f$ at $x$} by
$$
\ap df_x = dg_x.
$$
Note that $\ap df_x$ is independent of the choice of the function $g$ and set $K$  used to define it. 
A mapping $f:E\to \bbbr^m$ is approximately differentiable a.e.\ in $E$ if its coordinate
functions are approximately differentiable a.e.\ in $E$.

It is a theorem of Whitney~\cite{whitney} that this definition of approximate differentiability at almost every point coincides with the almost-everywhere validity of the usual pointwise definition of approximate differentiability given, 
for example, in \cite[Section 6]{EG}. In particular if $f$ is differentiable a.e.\ in an open set $\Omega\subset\bbbr^k$, 
then it is approximately differentiable a.e.\ in $\Omega$ in the sense described above. 
If $f:E\to\bbbr$, $E\subset\bbbr^k$ is Lipshitz continuous, then it can be extended to
a Lipschitz function $F:\bbbr^k\to\bbbr$. This can be done using the McShane extension \cite[2.10.44]{federer}. 
According to Rademacher's theorem, $F$ is differentiable a.e., which in turn implies
that $f$ is approximately differentiable a.e.\ in $E$.

A Sobolev function $f \in W^{1,1}_{\loc}(\Omega)$ is approximately differentiable a.e.\ in $\Omega$, and 
at almost every such point $x$, it holds that
$$
\ap df_x = \wk df_x,
$$
where $\wk df_x$ denotes the weak derivative of $f$ \cite[Section~6]{EG}. Indeed, Sobolev functions
$f\in W^{1,1}(\bbbr^k)$ satisfy the pointwise inequality 
$$
|f(x)-f(y)|\leq C(k)|x-y|(\M|\nabla f|(x)+\M|\nabla f(y)|)
\quad
\mbox{a.e.}
$$
where $\M g$ denotes the Hardy-Littlewood maximal function. Hence $f$ is Lipschitz continuous on the set
$\{x:\, \M|\nabla f|(x)\leq t\}$, the complement of which has measure bounded above by a constant multiple of $t\inv$.  
Now the result follows from the fact that Lipschitz functions are approximately differentiable a.e.\ For more details 
see \cite{hajlasz2}.

In the next section we will discuss the approximate differentiability of mappings into the Heisenberg group.

\section{The Heisenberg group}
\label{heisenberg}

For more details and references to the results stated here without proof,
see, e.g., \cite{capogna}. We retain the notation of the introduction.

As mentioned in the introduction, the {\em Heisenberg group}  is a Lie group
$\bbbh^n=\bbbc^n\times\bbbr=\bbbr^{2n+1}$ equipped with the group law
$$
(z,t)*(z',t')=\left(z+z',t+t'+2\, {\rm Im}\,  \left(\sum_{j=1}^n z_j
  \overline{z_j'}\right)\right).
$$
We recall that the basis of left invariant vector fields $X_1,\hdots, X_n,Y_1,\hdots, Y_n, T$ and the horizontal distribution $H\bbbh^n$ were defined in the introduction. 
An absolutely continuous curve 
$$
\gamma=(\gamma^{x_1},\gamma^{y_1},\ldots,\gamma^{x_n},\gamma^{y_n},\gamma^t):[a,b]\to\bbbr^{2n+1}
$$ 
is called {\em horizontal} if $\gamma'(s)\in H_{\gamma(s)}\bbbh^n$ for almost every $s$. 
This condition is equivalent to the contact equation $\gamma^*\alpha=0$, and hence it is also equivalent to
$$
2\sum_{j=1}^n \gamma^{x_j}d\gamma^{y_j}-\gamma^{y_j}d\gamma^{x_i} = - d\gamma^t.
$$
Thus if the curve $\gamma $ is closed,
\begin{equation}
\label{seven}
\frac{1}{2} \sum_{j=1}^n 
\int_\gamma\gamma^{x_j}d\gamma^{y_j}-\gamma^{y_j}d\gamma^{x_i} =
-\frac{1}{4}\int_\gamma d\gamma^t = 0.
\end{equation}
If $n=1$, according to Stokes' theorem
the integral on the left hand side equals the oriented area enclosed
by the projection of $\gamma$ on the $x_1y_1$ plane, and hence the enclosed area equals zero.
This property will play a crucial role in our arguments.
Since the enclosed area equals zero, the
curve cannot be Jordan. Typically it looks like the figure $8$, perhaps with a larger number of loops.
If $n>1$ the sum on the left hand side equals the sum of oriented areas of projections on the
planes $x_jy_j$. This makes our arguments slightly more complicated, but
they are essentially the same as in the case $n=1$.

The distribution $H\bbbh^n$ is equipped with the left invariant sub-Riemannian metric $\boldg$
defined by the condition that the vectors $X_1(p),\ldots,X_n(p),Y_1(p),\ldots,Y_n(p)$ are
orthonormal at every point $p\in\bbbh^n$. 
The Heisenberg group $\bbbh^n$ is then equipped with the {\em Carnot-Carath\'eodory metric} $d_{cc}$ 
which is defined as the infimum of the lengths of
horizontal curves connecting two given points. The length $\ell_{\bbbh}(\gamma)$ of the curve is
computed with respect to the metric $\boldg$ on $H\bbbh^n$.
That is if
$$
\gamma'(t)=\sum_{i=1}^n \alpha_i(t)X_i(\gamma(t))+\beta_i(t)Y_i(\gamma(t)),
\quad
a\leq t\leq b,
$$
then
$$
\ell_{\bbbh}(\gamma) =
\int_a^b \Vert\gamma'(t)\Vert_{\bbbh}\, dt =
\int_a^b \sqrt{\sum_{i=1}^n \alpha_i^2(t)+\beta_i^2(t)}\, dt.
$$
Since
$$
\gamma'(t)=\sum_{i=1}^n \alpha_i(t)\frac{\partial}{\partial x_i} + 
\beta_i(t)\frac{\partial}{\partial y_i} +
\left(\sum_{i=1}^n 2\gamma^{y_i}(t)\alpha_i(t) -2\gamma^{x_i}(t)\beta_i(t)\right)\, \frac{\partial}{\partial t}
$$
we conclude that the Carnot-Carath\'eodory length $\ell_{\bbbh}(\gamma)$ is less than or equal to
the Euclidean length of $\gamma$. In fact, $\ell_{\bbbh}(\gamma)$ equals to the Euclidean length
of the projection of $\gamma$ on $\bbbr^{2n}$ defined in \eqref{proj}.

The non-integrability of the horizontal distribution implies that any two points in $\bbbh^n$ can be connected by a
horizontal curve and hence $d_{cc}$ is a true metric. 
In fact, any pair of points can be connected by curve whose length equals the distance between the points. Such a curve is called a {\em geodesic}; note that there may be more than one geodesic connecting a given pair of points. We say that a metric space $(X,d)$ is a {\em geodesic space} if any pair of points can be connected by a geodesic. The Heisenberg group is an example of a geodesic space.

The Carnot-Carath\'eodory metric is
topologically equivalent to the Euclidean metric. Moreover, for any compact
set $K\subset \bbbh^n$ there is a constant $C\geq 1$ such that
\begin{equation}
\label{SReq1}
C^{-1}|p-q|\leq d_{cc}(p,q)\leq C|p-q|^{1/2}
\end{equation}
for all $p,q\in K$. The space $\bbbh^n$ is complete with respect to the metric $d_{cc}$.
In what follows, $\bbbh^n$ will always be regarded as the metric space
$(\bbbh^n,d_{cc})$. 
In particular, the identity mapping ${\rm id}\, :\bbbh^n\to\bbbr^{2n+1}$ is
locally Lipschitz continuous. Hence a Lipschitz mapping
$f:E\to\bbbh^n$, $E\subset\bbbr^k$, is locally Lipschitz as a mapping into 
$\bbbr^{2n+1}$. As discussed previously, this implies that it is approximately differentiable a.e.\ in $E$.

It is often more convenient to work the {\em Kor\'anyi metric}, which is
bi-Lipschitz equivalent to the Carnot-Carath\'eodory metric but much easier to compute.
The Kor\'anyi metric is defined by
$$
d_{\rm{K}}(p,q)=\Vert q^{-1}*p\Vert_{\rm{K}},
\quad
\mbox{where}
\quad
\Vert (z,t)\Vert_{\rm{K}}=\left(|z|^4+t^2\right)^{1/4}\, .
$$
A straightforward computation shows that for
$p=(z,t)=(x,y,t)$ and $q=(z',t')=(x',y',t')$ we have
\begin{eqnarray}
\label{kor}
d_{\rm{K}}(p,q)
& = &
\left(|z-z'|^4 +
\left|t-t'+2\sum_{j=1}^n(x_j'y_j-x_jy_j')\right|^{2}\right)^{1/4} \\
& \approx &
|z-z'| +
\left|t-t'+2\sum_{j=1}^n(x_j'y_j-x_jy_j')\right|^{1/2}\, . \nonumber
\end{eqnarray}
Here $f\approx g$ means that $C^{-1}f\leq g\leq Cf$ for some constant $C\geq 1$.

A curve in a metric space is called {\em rectifiable} if it has
finite length. Given a compact interval $I \in \bbbr$, an absolutely continuous path $\gamma \colon I \to \bbbr^{2n+1}$ has finite length by the fundamental theorem of calculus. Since a horizontal path is assumed to be Euclidean absolutely continuous, and its length calculated with respect to sub-Riemannian metric is no greater than its Euclidean length, each horizontal path is also rectifiable in $\bbbh^n$.

Every rectifiable curve in a metric space
admits an arc-length parameterization
\cite[Theorem~3.2]{hajlasz1}. With this parameterization the curve is
$1$-Lipschitz. If $\gamma:[a,b]\to\bbbh^n$ is Lipschitz, then it is also Lipschitz
as a curve in $\bbbr^{2n+1}$ and hence it is differentiable a.e.
It turns out that the tangent vectors to $\gamma$ are horizontal a.e.,
so the curve is horizontal (see Proposition~\ref{contact} and also \cite[Proposition~11.4]{hajlaszk}
for a more general result). 
Thus any rectifiable curve in $\bbbh^n$ admits an arc-length parameterization in which it is horizontal.

We will need the following extension results for geodesic spaces. 
Clearly they apply to the Heisenberg group $X=\bbbh^n$ which is complete
as a metric space.
\begin{lemma}
\label{line extension Lipschitz}
Let $K\subset [a,b]$, be a compact subset of a compact interval. 
If $f:K\to X$ is an $L$-Lipschitz mapping into a geodesic space, then there is
an $L$-Lipschitz extension $F:[a,b]\to X$ which agrees with $f$ on $K$.
If in addition $X$ is complete, $K$ can be any subset of $[a,b]$,
not necessarily compact.
\end{lemma}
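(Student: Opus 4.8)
\emph{Proof proposal.} The plan is to construct $F$ explicitly, filling the complementary intervals of $K$ with affinely reparametrized geodesics, after first reducing to the case in which $K$ is closed. Indeed, since $f$ is $L$-Lipschitz it is uniformly continuous, so if $X$ is complete it extends uniquely to an $L$-Lipschitz map on the closure $\overline{K}\subset[a,b]$ (pass to the limit in the Lipschitz inequality). As $\overline{K}$ is a closed subset of a compact interval it is compact, so any $L$-Lipschitz extension to $[a,b]$ of this map restricts to an $L$-Lipschitz extension of $f$. Hence it suffices to treat compact $K$, and we may assume $K\neq\emptyset$.

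Now suppose $K$ is compact. The set $[a,b]\setminus K$ is relatively open in $[a,b]$, hence a countable union of pairwise disjoint intervals. Those containing neither $a$ nor $b$ have the form $(c,d)$ with $c,d\in K$; for such an interval $d_X(f(c),f(d))\le L(d-c)$, and since $X$ is geodesic I choose a geodesic from $f(c)$ to $f(d)$, which in its arc-length parametrization is $1$-Lipschitz, and rescale it affinely onto $[c,d]$, obtaining a map that agrees with $f$ at the endpoints and is $\big(d_X(f(c),f(d))/(d-c)\big)$-Lipschitz, hence $L$-Lipschitz; let $F$ equal this map on $[c,d]$. At most two intervals remain, of the form $[a,d)$ with $d=\min K$ or $(c,b]$ with $c=\max K$; there set $F$ equal to the constant $f(d)$, resp.\ $f(c)$. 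Finally put $F=f$ on $K$. All definitions agree on overlaps, which are points of $K$, so $F\colon[a,b]\to X$ is well defined.

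It remains to check that $F$ is $L$-Lipschitz on all of $[a,b]$. Call a \emph{block} the closure of one of the complementary intervals above, or a singleton $\{x\}$ with $x\in K$; on each block $F$ is $L$-Lipschitz, and any endpoint of a block that is interior to $[a,b]$ lies in $K$. Given $s<t$, if they lie in a common block we are done; otherwise the block $P_s$ containing $s$ lies to the left of the block $P_t$ containing $t$, and setting $p=\max P_s\in K$, $q=\min P_t\in K$ we have $s\le p\le q\le t$ with $F(p)=f(p)$ and $F(q)=f(q)$, so
$$
d_X(F(s),F(t))\le d_X(F(s),f(p))+d_X(f(p),f(q))+d_X(f(q),F(t))\le L(p-s)+L(q-p)+L(t-q)=L(t-s).
$$

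The argument presents no real difficulty: the essential inputs are the geodesic property of $X$, which permits filling each gap without enlarging the Lipschitz constant, and, for the non-compact case, the completeness of $X$, used to pass to $\overline{K}$. The only step demanding a little care is the final bookkeeping, namely the Lipschitz estimate for points separated by one or more gaps together with the consistent treatment of the two boundary intervals.
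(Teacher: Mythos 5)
Your proposal is correct and follows essentially the same route as the paper: reduce to compact $K$ via the unique $L$-Lipschitz extension to $\overline{K}$ (using completeness), fill each complementary interval with an affinely reparametrized geodesic, handle the two boundary gaps by constant extension, and conclude with the triangle inequality through points of $K$. The only difference is cosmetic (the paper makes $a,b\in K$ first by the constant extension, while you treat the boundary intervals separately and spell out the final bookkeeping that the paper leaves to the reader).
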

{\em Proof.}
Suppose first that $(X,d)$ is any geodesic metric space and $K$ is compact.
Let $\alpha=\inf K$ and $\beta =\sup K$ 
be the ``endpoints'' of the set $K$. By extending $f$ to the intervals $[a,\alpha]$
and $[\beta,b]$ as a constant mapping equal $f(\alpha)$ and $f(\beta)$ respectively
we can assume without loss of generality that $a,b\in K$.
Now we can write $[a,b]\setminus K$ as a countable union of disjoint open intervals
$\{ (a_i,b_i)\}_{i=1}^\infty$. Let
$\gamma_i:[0,d(f(a_i),f(b_i))]\to X$
be a geodesic connecting $f(a_i)$ to $f(b_i)$, parameterized by arc-length. Since
$\gamma_i$ is $1$-Lipschitz, the curve
$$
F_i:[a_i,b_i]\to X,
\qquad
F_i(t)=\gamma_i\left((t-a_i)\frac{d(f(a_i),f(b_i))}{|a_i-b_i|}\right)
$$
is Lipschitz with the Lipschitz constant bounded by
$$
\frac{d(f(a_i),f(b_i))}{|a_i-b_i|}\leq L.
$$
It now follows from the triangle inequality that
$F:[a,b]\to X$ defined by
$$
F(t) = \begin{cases}
          f(t) \quad \mbox{for $t\in K$},\\
          F_i(t) \quad \mbox{for $t\in (a_i,b_i)$},
         \end{cases}
$$
is an $L$-Lipschitz extension of $f$.
Suppose now that in addition to being geodesic, the space $X$ is also complete. If $K\subset [a,b]$
is any subset, then $f$ uniquely extends to the closure of $K$
as an $L$-Lipschitz mapping and the result follows from the compact $K$ case discussed above.
\hfill $\Box$

The next result is a variant of Lemma~\ref{line extension Lipschitz}
and will be used in the proof of Theorem~\ref{low rank lipschitz}. 

\begin{lemma}
\label{Jordan extension} 
Let $K\subset S$ be a compact subset of a circle $S\subset\bbbr^2$. 
We assume that the circle is equipped with the metric inherited from $\bbbr^2$.
If $f:K\to X$ is an $L$-Lipschitz mapping into a geodesic space, then there is an $L\pi/2$-Lipschitz
extension $F:S\to X$ which agrees with $f$ on $K$.
If in addition $X$ is complete, $K$ can be any subset of $S$,
not necessarily compact.
\end{lemma}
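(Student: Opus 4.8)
The plan is to reduce the circle case to the interval case already established in Lemma~\ref{line extension Lipschitz}, at the cost of the multiplicative factor $\pi/2$. First I would dispose of the trivial situations: if $K$ has at most one point the statement is immediate (extend by a constant or a single geodesic), so I may assume $K$ contains at least two points. As in the previous lemma, I would first treat compact $K$ and then, in the complete case, reduce a general $K$ to its closure, since an $L$-Lipschitz map on $K\subset S$ extends uniquely and $L$-Lipschitzly to $\overline K$.

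For compact $K$ with at least two points, I would write the complement $S\setminus K$ as a countable disjoint union of open arcs $\{A_i\}$. Each $A_i$ is an open sub-arc of the circle with endpoints $p_i,q_i\in K$; let $\ell_i$ denote its (Euclidean-inherited, i.e.\ chordal) arc endpoints' distance, and note that the \emph{length} of the arc $A_i$ is at most $(\pi/2)|p_i-q_i|$, because for a unit circle a chord subtending angle $\theta\le\pi$ has length $2\sin(\theta/2)$ while the arc has length $\theta$, and $\theta/(2\sin(\theta/2))\le \pi/2$ on $(0,\pi]$ (the ratio is increasing and equals $\pi/2$ at $\theta=\pi$); after rescaling the circle this bound is scale-invariant. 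On each arc $A_i$ I would insert, as in Lemma~\ref{line extension Lipschitz}, an affinely reparametrized arc-length geodesic $\gamma_i$ in $X$ joining $f(p_i)$ to $f(q_i)$, parametrized now over the arc $A_i$ by its own arc-length rather than over an interval; since $\gamma_i$ is $1$-Lipschitz and is traversed over an arc of length $\geq d_X(f(p_i),f(q_i))$ wait — more precisely the reparametrization has speed $d_X(f(p_i),f(q_i))/\mathrm{length}(A_i)$, which is at most $L|p_i-q_i|/\mathrm{length}(A_i)\le L$ is the wrong direction; instead I bound it by $L\,|p_i-q_i|/\mathrm{length}(A_i)$ and this is $\le L$ only if arcs are shorter than chords, which is false. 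The correct bookkeeping is: parametrize $F$ on $A_i$ with speed $d_X(f(p_i),f(q_i))/\mathrm{length}(A_i) \le L|p_i-q_i|/\mathrm{length}(A_i) \le L$, using $\mathrm{length}(A_i)\ge|p_i-q_i|$; the $\pi/2$ loss instead appears when comparing distances between points lying on two different inserted arcs, via the chord-to-arc bound.

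Thus the key step is the triangle-inequality estimate showing $F$ is $(L\pi/2)$-Lipschitz globally. For $s,t\in S$ I would route a path from $F(s)$ to $F(t)$ through the relevant endpoints in $K$: on each piece the length traversed in $X$ is at most $L$ times the arc-length in $S$ between consecutive break points, so $d_X(F(s),F(t))\le L\cdot\mathrm{length}(\text{arc from }s\text{ to }t)$. Then I invoke $\mathrm{length}(\text{arc})\le(\pi/2)\,d_{\bbbr^2}(s,t)$ for the shorter of the two arcs of $S$ between $s$ and $t$ (again the chord-to-arc inequality $\theta\le(\pi/2)\cdot 2\sin(\theta/2)$ for $\theta\in[0,\pi]$), which yields $d_X(F(s),F(t))\le (L\pi/2)\,d_{\bbbr^2}(s,t)$. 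For the completion step in the non-compact case, the extension to $\overline K$ is $L$-Lipschitz and $\overline K\subset S$ is compact, so the compact case applies. The main obstacle is organizing the global Lipschitz estimate cleanly: one must check that routing through $K$-endpoints never forces a path longer than the short arc between $s$ and $t$, which requires a small case analysis according to whether $s,t$ lie in the same complementary arc $A_i$, in $K$, or in different arcs — but in every case the traversed length is controlled by the $S$-arc-length between the points, and the chord-to-arc constant $\pi/2$ finishes the argument.
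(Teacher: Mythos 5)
Your proposal is correct and follows essentially the same route as the paper: decompose $S\setminus K$ into complementary arcs, fill each with a constant-speed geodesic, observe that $F$ maps arcs of length $\ell$ onto curves of length at most $L\ell$, and finish with the chord-to-arc bound $\theta\le\frac{\pi}{2}\cdot 2\sin(\theta/2)$, plus the closure argument for non-compact $K$ when $X$ is complete. The only cosmetic difference is that the paper reparametrizes each arc through a $1$-Lipschitz map $h_i$ onto an interval of chord length $|a_i-b_i|$ (so each piece is $L$-Lipschitz for the chordal metric), whereas you run the geodesic at speed $d_X(f(p_i),f(q_i))/\mathrm{length}(A_i)\le L$ with respect to arc length; both yield the same length estimate and the same constant $L\pi/2$.
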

{\em Proof.}
If $K\subset S$ is compact,
we can write $S\setminus K$ as a countable union of arcs
$\{ \widetilde{a_ib_i}\}_{i=1}^\infty$. It is easy to see  
that there is a $1$-Lipschitz mapping from 
$\widetilde{a_ib_i}$ onto the interval of length $|a_i-b_i|$
$$
h_i:\widetilde{a_ib_i}\to [0,|a_i-b_i|].
$$ 
Let $\gamma_i$ be defined as in the proof of Lemma~\ref{line extension Lipschitz}. Now
$$
F_i:\widetilde{a_ib_i}\to X
\qquad
F_i(t)=\gamma_i\left(\frac{d(f(a_i),f(b_i))}{|a_i-b_i|}\, h_i(t)\right)
$$
is Lipschitz continuous with the Lipschitz constant bounded by $L$.
We define the extension $F$ as in Lemma~\ref{line extension Lipschitz}.
The mapping $F$ maps arcs of length $\ell$ onto curves of length
at most $L\ell$. Since the length of a shorter arc connecting two 
given points $a,b\in S$ is bounded by $|a-b|\pi/2$ the result easily follows.
If $X$ is also complete and $K\subset S$ is any subset, the argument is exactly the same as in the previous proof.
\hfill $\Box$

As an application of Lemma~\ref{line extension Lipschitz} we will prove

\begin{lemma}
\label{3.2}
Let $k$ and $n$ be positive integers. Let $E\subset\bbbr^k$ be measurable. If $f:E\to \bbbh^n$ is 
locally Lipschitz,
then for $\H^k$-almost every $x\in E$, the image of 
$\ap df_x$ is contained in the horizontal subspace $H_{f(x)}\bbbh^n$.
\end{lemma}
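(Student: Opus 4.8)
Since $H_{f(x)}\bbbh^n=\ker\alpha(f(x))$ is a linear subspace of $\bbbr^{2n+1}$ and $e_1,\dots,e_k$ span $\bbbr^k$, the asserted inclusion $\im\ap df_x\subset H_{f(x)}\bbbh^n$ is equivalent to the $k$ conditions $\ap df_x(e_i)\in\ker\alpha(f(x))$, $i=1,\dots,k$. So the plan is to fix $i\in\{1,\dots,k\}$ and prove that the set
$$
A_i:=\big\{x\in E:\ \ap df_x\text{ exists and }\ap df_x(e_i)\in\ker\alpha(f(x))\big\}
$$
has full measure in $E$; intersecting the $A_i$ over $i$ then proves the lemma. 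The idea for a single $i$ is to slice $E$ by lines parallel to $e_i$, extend $f$ along each slice to a Lipschitz curve into $\bbbh^n$ via Lemma~\ref{line extension Lipschitz}, and use that such curves are horizontal almost everywhere.

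First I would reduce to the case that $E$ is bounded and $f\colon E\to\bbbh^n$ is globally $L$-Lipschitz. The approximate derivative of $f$ at a point depends only on the behaviour of $f$ near that point among points of density $1$, hence it is unchanged when $f$ is restricted to a subset of full density there; so it suffices to prove $\H^k(E\setminus A_i)=0$ with $E$ replaced by each member of a countable cover of $E$ by bounded sets on which $f$ is Lipschitz, and such a cover exists because $f$ is locally Lipschitz. After this reduction $f\colon E\to\bbbr^{2n+1}$ is Lipschitz, hence approximately differentiable a.e.\ on $E$ by Section~\ref{AD}. Fix $\eps>0$ and choose a measurable $K\subset E$ with $\H^k(E\setminus K)<\eps$ and a map $g\in C^1(\bbbr^k;\bbbr^{2n+1})$ agreeing with $f$ on $K$, so that $\ap df_x=dg_x$ at every density point $x$ of $K$. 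As $\ap df$ is measurable where defined and $f$ is continuous, $A_i$ is measurable; and since $A_i$ does not depend on $\eps$, $K$ or $g$, it will be enough to show $\H^k(K\setminus A_i)=0$, for then $\H^k(E\setminus A_i)\le\eps$ for every $\eps$.

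Now foliate $\bbbr^k$ by the lines $\ell_w=w+\bbbr e_i$, $w\in e_i^{\perp}$, and fix a bounded interval $I$ so that every slice $\ell_w\cap E$ sits inside the corresponding translate of $I$. For each $w$, $f$ maps the bounded set $K\cap\ell_w$ into the complete geodesic space $\bbbh^n$ in an $L$-Lipschitz way, so Lemma~\ref{line extension Lipschitz} yields an $L$-Lipschitz curve $\gamma_w\colon I\to\bbbh^n$ agreeing with $f$ on $K\cap\ell_w$. Being Lipschitz into $\bbbh^n$, the curve $\gamma_w$ is Lipschitz into $\bbbr^{2n+1}$, differentiable a.e., and has horizontal tangent vectors a.e.\ (the curve case recorded in Section~\ref{heisenberg}; see Proposition~\ref{contact}, or differentiate the Kor\'anyi estimate \eqref{kor} along $\gamma_w$). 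For a.e.\ $w$, by Fubini applied to the $\H^k$-null set of non-density points of $K$, $\H^1$-a.e.\ point of $K\cap\ell_w$ is a $k$-dimensional density point of $K$; fix such a $w$. Then for $\H^1$-a.e.\ $x\in K\cap\ell_w$ one has, simultaneously, that $x$ is a one-dimensional density point of $K\cap\ell_w$, that $x$ is a $k$-dimensional density point of $K$ (hence $\ap df_x=dg_x$), and that $\gamma_w$ is differentiable at $x$ with $\gamma_w'(x)\in\ker\alpha(\gamma_w(x))$. Comparing the first-order expansions of $\gamma_w$ and of the $C^1$ map $g$ along the set $K\cap\ell_w$, which has density $1$ at $x$, forces $\gamma_w'(x)=dg_x(e_i)=\ap df_x(e_i)$; since also $\gamma_w(x)=f(x)$, this means $\ap df_x(e_i)\in\ker\alpha(f(x))$, i.e.\ $x\in A_i$.

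Consequently $(K\cap\ell_w)\setminus A_i$ is $\H^1$-null for a.e.\ $w$, and Fubini's theorem gives $\H^k(K\setminus A_i)=0$, as required. The step I expect to demand the most care is this last Fubini bookkeeping: because the auxiliary curves $\gamma_w$ involve arbitrary geodesic choices, one must slice the intrinsically defined set $A_i$ and never a set phrased in terms of the $\gamma_w$. The remaining points — the reduction to globally Lipschitz $f$ via locality of the approximate derivative, and the identification $\gamma_w'(x)=dg_x(e_i)$ (immediate since $g\in C^1$ agrees with $\gamma_w$ on a set of density $1$ at $x$) — are routine.
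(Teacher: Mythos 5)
Your proposal is correct and follows essentially the same route as the paper's proof: slice along lines parallel to a fixed coordinate direction, extend $f$ on each slice to an $L$-Lipschitz (hence horizontal) curve via Lemma~\ref{line extension Lipschitz}, and at points that are simultaneously $k$-dimensional density points of $K$, one-dimensional density points of the slice, and differentiability points of the extended curve, identify $\ap df_x(e_i)$ with the horizontal tangent vector through the $C^1$ map $g$, finishing with Fubini. The only differences are cosmetic: you argue directly that the intrinsic set $A_i$ has full measure (noting its measurability), whereas the paper argues by contradiction with a ``bad set'' and extends $f$ from the full $E$-slices rather than the $K$-slices.
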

{\em Proof.}
Since measurable sets can be exhausted (up to a subset of measure zero)
by countably many compact sets, we may assume
that $E$ is compact and $f$ is $L$-Lipschitz, $L \geq 1$.  We may also assume without loss of generality that E is contained in the unit cube $[0,1]^k$.
For each $\rho \in [0,1]^{k-1}$, define the line segment $\ell_\rho = \{\rho\}\times[0,1]$, and set 
$$
E_{k-1} = \{\rho \in [0,1]^{k-1} : \ell_\rho \cap E \neq \emptyset\}.
$$ 
By Lemma~\ref{line extension Lipschitz}, for each
$\rho \in E_{k-1}$, we may find an $L$-Lipschitz extension 
$f_\rho$ of $f|_{\ell_\rho \cap E}$ to all of $\ell_\rho$.  
Since $f_\rho$ defines a rectifiable curve in $\bbbh^n$, it follows that for $\H^1$-almost every $s \in [0,1]$, 
the tangent vector $f'_\rho(s)$ exists and is contained in the horizontal subspace $H_{f_\rho(s)}\bbbh^n$.
 
Denote 
$$
\mathcal{B} =E \setminus \{x=(\rho,s) \in E: \ap df_x(e_k) \ \text{and}\ f'_{\rho}(s) \ \text{exist and agree}\}.
$$
We claim that $\H^k(\mathcal{B}) = 0$, showing that $\ap df_x(e_k)$ is in the horizontal 
tangent space for $\H^k$-almost every $x \in E$. An analogous argument for each vector 
$\ap df_x(e_j)$, $j=1,\hdots,k-1$, completes the proof. 

If $\H^k(\mathcal{B}) >0$, then we may find a set $K \subset \mathcal{B}$ such that $\H^k(K) > 0$ and a 
$C^1$-mapping $g \colon [0,1]^k \to \bbbr^{2n+1}$ that agrees with $f$ on $K$. Moreover,  
$dg_x(e_k)=\ap df_x(e_k)$ at each density point of $K$. 
Set 
$$
K_{k-1} = \{\rho \in [0,1]^{k-1} : \ell_\rho \cap K \neq \emptyset\},
$$
and given $\rho \in K_{k-1}$, set
$$
K_{\rho} = \{s \in [0,1]: (\rho,s) \in K\}.
$$
Fubini's theorem implies that for $\H^{k-1}$-almost every point $\rho \in K_{k-1}$ and 
$\H^1$-almost every $s \in K_\rho$, the point $x=(\rho,s)$ is a $\H^k$-density point of $K$, a 
$\H^1$-density point of $\ell_\rho \cap K$, and a point of differentiability of $f_\rho$. 
At such a point $x=(\rho,s)$, it holds that 
$$
\ap df_x(e_k)=dg_x(e_k)=f'_\rho(s),
$$
and so $x \notin \mathcal{B}$, a contradiction.
\hfill $\Box$

\section{The standard symplectic form}
\label{SF}

The {\em standard symplectic form} in $\bbbr^{2n}$ is the differential $2$-form on $\bbbr^{2n}$ defined by
$$
\omega=\sum_{i=1}^n dx_i\wedge dy_i,
$$
i.e. for vector fields
\begin{equation}
\label{vectors}
v=\sum_{i=1}^n v^{x_i}\frac{\partial}{\partial x_i} + v^{y_i} \frac{\partial}{\partial y_i}
\quad
\mbox{and}
\quad
w=\sum_{i=1}^n w^{x_i}\frac{\partial}{\partial x_i} + w^{y_i} \frac{\partial}{\partial y_i}
\end{equation}
we define $\omega(v,w) \colon \bbbr^{2n} \to \bbbr$ by
$$
\omega(v,w)=
\sum_{i=1}^n \left( v^{x_i}w^{y_i}- v^{y_i}w^{x_i}\right).
$$ 
We will denote the evaluation of this form at the point $q$ by $\omega(q)(v,w)$. When $v$ and $w$ are constant vector fields or are only defined at a single point, i.e., they are tangent vectors, $\omega(v,w)$ can be thought of as a single real number. 

The standard symplectic form can be equivalently defined by the \emph{standard complex structure on $T_q\bbbr^{2n}$}, i.e., the isomorphism $\cJ \colon T_q\bbbr^{2n} \to T_q\bbbr^{2n}$ determined by 
$$ 
\cJ\left(\frac{\partial}{\partial x_i} \right) = \frac{\partial}{\partial y_i}, 
\qquad 
\cJ\left(\frac{\partial}{\partial y_i} \right) = -\frac{\partial}{\partial x_i}  
$$
for $i=1,\hdots,n$.  Namely, it holds that 
$$
\omega(q)(v,w)=-\langle v(q),\cJ w(q)\rangle,
$$
where $\langle\cdot,\cdot\rangle$ stands for the standard scalar product on $\bbbr^{2n}$.
 
A vector subspace $V\subset \bbbr^{2n}$ is said to be {\em isotropic} if
$\omega(v,w)=0$ for all $v,w\in V$. Subspaces that are not isotropic have a geometric interpretation that will help in understanding the
main idea behind the proofs of Theorems~\ref{low rank lipschitz} and~\ref{low rank}.
If $V\subset\bbbr^{2n}$ is non-isotropic, then there are vectors $v,w\in V$ such that
$$
\omega(v,w)=\sum_{i=1}^n (dx_i\wedge dy_i) (v,w) \neq 0.
$$
Observe that $(dx_i\wedge dy_i)(v,w)$ is the oriented area of the projection of the parallelogram with sides $v$ and $w$ 
onto the coordinate plane $x_iy_i$. Thus the sum of the (oriented) areas of the projections on the planes
$x_iy_i$, $i=1,2,\ldots,n$, is different from zero. Clearly if we replace the parallelogram with any measurable set
$E\subset {\rm span}\, \{v,w\}$ of positive measure, the sum of oriented areas of the projections is still non-zero.
In the proofs of Theorems~\ref{low rank lipschitz} and~\ref{low rank} we will consider the case that $E$ is an ellipse.
 
If we identify elements $z$ and $z'$ of $\bbbr^{2n}=\bbbc^n$ with vectors of the form \eqref{vectors}, then the
product in the Heisenberg group can be written as
$$
(z,t)*(z',t')=(z+z',t+t'-2\omega(z,z')).
$$
Also
\begin{equation}
\label{dK_sym}
d_{\rm{K}}((z,t),(z',t'))=\left( |z-z'|^4+|t-t'-2\omega(z,z')|^2\right)^{1/4}.
\end{equation}
It easily 
follows from the definitions that for any isotropic subspace $V$ and any $t \in \bbbr$, 
the restriction of the Kor\'anyi metric to 
$V \times \{t\} \subset \bbbh^n$ agrees with the Euclidean metric on $V$. For this reason, we now 
discuss the well-known linear algebra associated with the standard symplectic form.

Given a subspace $V$ of $\bbbr^{2n}$, we denote by $V^*$ the \emph{dual} of $V$, i.e., the vector space of 
linear homomorphisms from $V$ to $\bbbr$, and define the \emph{symplectic complement} of $V$ by
$$
V^\omega = \{w \in \bbbr^{2n} : \omega(v,w) = 0 \ \text{for all $v \in V$}\}.
$$ 
Note that $V$ is isotropic if and only if $V \subset V^\omega$. An isotropic subspace $V$ is said to be \emph{Lagrangian} if it is of dimension $n$, the maximum possible for an isotropic subspace.

\begin{lemma}
\label{dim_n}
Let $V$ be a subspace of $\bbbr^{2n}$. Then 
\begin{itemize}
\item $\dim V^\omega = 2n - \dim V,$
\item if $V$ is isotropic, then $\dim V\leq n$,
\item if $V$ is isotropic, then there is a Lagrangian subspace of $\bbbr^{2n}$ that contains $V$.
\end{itemize}
\end{lemma}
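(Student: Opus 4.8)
The plan is to treat the three assertions in order, the first being the crux and the other two following quickly from it. The key structural fact is the non-degeneracy of $\omega$, which via the identity $\omega(q)(v,w) = -\langle v,\cJ w\rangle$ is equivalent to the invertibility — indeed orthogonality — of the complex structure $\cJ$.

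For the dimension formula I would rewrite the symplectic complement in terms of the ordinary orthogonal complement:
$$
V^\omega = \{w \in \bbbr^{2n} : \langle v, \cJ w\rangle = 0 \text{ for all } v \in V\} = \cJ^{-1}(V^\perp).
$$
Since $\cJ$ is a linear isomorphism, this gives $\dim V^\omega = \dim V^\perp = 2n - \dim V$. (Alternatively one can argue abstractly: the map $w \mapsto \omega(\cdot,w)|_V$ is a linear surjection $\bbbr^{2n}\to V^*$ with kernel exactly $V^\omega$, because $w\mapsto\omega(\cdot,w)$ is an isomorphism onto $(\bbbr^{2n})^*$ and restriction of functionals to $V$ is onto $V^*$; then the rank-nullity theorem gives the same conclusion.)

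The second assertion is then immediate: if $V$ is isotropic, then $V \subset V^\omega$, so $\dim V \le \dim V^\omega = 2n - \dim V$, whence $\dim V \le n$.

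For the third assertion I would induct on $n - \dim V$. If $\dim V < n$, then by the dimension formula $\dim V^\omega = 2n - \dim V > n > \dim V$, so in particular $V^\omega \setminus V \ne \emptyset$; pick $w$ in it and set $V' = V \oplus \bbbr w$. A routine bilinearity-and-antisymmetry computation shows $V'$ is again isotropic: any $\omega$-pairing of elements of $V'$ reduces to a pairing within $V$ (zero, as $V$ is isotropic), a pairing of an element of $V$ against $w$ (zero, as $w \in V^\omega$), or $\omega(w,w)$ (zero by antisymmetry). Since $\dim V' = \dim V + 1$, iterating until the dimension reaches $n$ yields a Lagrangian subspace containing $V$. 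I do not anticipate a genuine obstacle here; the only point requiring care is that a vector $w \in V^\omega \setminus V$ exists at each step, which is precisely where the dimension formula of the first part is invoked.
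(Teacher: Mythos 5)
Your proposal is correct and follows essentially the same route as the paper: the dimension formula comes from the non-degeneracy of $\omega$ via the complex structure $\cJ$ (your map $w\mapsto\omega(\cdot,w)|_V$ alternative is literally the paper's argument, and $V^\omega=\cJ^{-1}(V^\perp)$ is just a direct rephrasing), the bound $\dim V\leq n$ follows from $V\subset V^\omega$, and the Lagrangian extension uses the identical step of adjoining a vector $w\in V^\omega\setminus V$ to enlarge an isotropic subspace. The only cosmetic difference is that you package the third part as an induction on dimension, whereas the paper takes a maximal isotropic subspace containing $V$ and derives a contradiction; both hinge on exactly the same extension argument.
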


\begin{proof}
Consider the homomorphism $\Phi \colon\bbbr^{2n} \to V^*$ defined by 
$$
\Phi(w)(v) = \omega(v,w) = -\langle v, \cJ w\rangle.
$$
The kernel of $\Phi$ is precisely $V^{\omega}$.  Since $\cJ$ is an isomorphism, the homomorphism $\Phi$ is surjective, showing that 
\begin{equation}\label{dim of Vomega} \dim V^{\omega} = 2n - \dim V^* = 2n - \dim V.\end{equation}

If $V$ is isotropic, then $V$ is contained in $V^\omega$, and so \eqref{dim of Vomega} implies that $\dim V \leq n.$

Finally, we  show that if $V$ is isotropic, then it is contained in a Lagrangian subspace. Let $V'$ be an isotropic subspace containing $V$ of maximal dimension. If $\dim V' = \dim (V')^\omega$, then \eqref{dim of Vomega} applied to $V'$ implies that $\dim V' = n$, showing that $V'$ is Lagrangian. If $\dim V' < \dim (V')^\omega$, then we may find a vector $w \in (V')^\omega$ that is not in $V'$. However, the subspace generated by $V' \cup \{w\}$ is again isotropic, contradicting the maximality of $V'$. \end{proof}

The canonical example of an isotropic subspace of $\bbbr^{2n}$ is the span of 
$\{\partial/\partial x_1,\hdots,\partial/\partial x_j\}$ for some $j \leq n$. 
From the perspective of the metric geometry of the Heisenberg group, all isotropic subspaces may be assumed to be of this form, as the following statement shows. A more detailed proof can be found in~\cite[Lemma~2.1]{balogh1}.

\begin{lemma}
\label{standard isotropic} 
Let $V$ and $W$ be isotropic subspaces of $\bbbr^{2n}$, which we identify with the 
corresponding subsets of $\bbbr^{2n} \times \{0\} \subset \bbbh^n$.  
If $V$ and $W$ have the same dimension, then there is a linear 
isometry $\Phi \colon \bbbh^n \to \bbbh^n$ with respect to the Kor\'anyi metric
such that $\Phi(V)=W$. 
\end{lemma}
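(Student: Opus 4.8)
The plan is to produce the isometry in the explicit form $\Phi(z,t) = (Az,t)$, where $A \colon \bbbr^{2n} \to \bbbr^{2n}$ is a linear map that is simultaneously orthogonal and symplectic, that is, $A \in {\rm O}(2n) \cap {\rm Sp}(2n)$. For such an $A$, formula~\eqref{dK_sym} gives at once
$$
d_{\rm K}(\Phi(z,t),\Phi(z',t')) = \left(|Az-Az'|^{4} + \bigl|t-t'-2\omega(Az,Az')\bigr|^{2}\right)^{1/4} = d_{\rm K}((z,t),(z',t')),
$$
because $|A\zeta| = |\zeta|$ and $\omega(A\zeta,A\zeta') = \omega(\zeta,\zeta')$ for all $\zeta,\zeta' \in \bbbr^{2n}$; moreover $\Phi$ maps $V \times \{0\}$ onto $A(V)\times\{0\}$. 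Thus everything reduces to finding such an $A$ with $A(V) = W$. Recalling from the previous section that $\omega(v,w) = -\langle v,\cJ w\rangle$ and that $\cJ$ is an orthogonal map, one sees that an $A \in {\rm O}(2n)$ is symplectic exactly when it commutes with the complex structure $\cJ$.

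The key point is then the following completion lemma: \emph{any orthonormal basis $e_1,\dots,e_j$ of an isotropic subspace $V \subset \bbbr^{2n}$ extends to an orthonormal basis $e_1,\dots,e_n,\cJ e_1,\dots,\cJ e_n$ of $\bbbr^{2n}$.} To prove it I would first observe that isotropy of $V$ means precisely $\langle e_a,\cJ e_b\rangle = \omega(e_a,e_b) = 0$ for all $a,b$, while $\langle \cJ e_a,\cJ e_b\rangle = \langle e_a,e_b\rangle = \delta_{ab}$ since $\cJ$ is orthogonal; hence $\{e_1,\dots,e_j,\cJ e_1,\dots,\cJ e_j\}$ is orthonormal and spans a $\cJ$-invariant subspace $U$ of dimension $2j$. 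Its orthogonal complement $U^{\perp}$ is again $\cJ$-invariant, so I may choose a unit vector $e_{j+1} \in U^{\perp}$; then $\cJ e_{j+1} \in U^{\perp}$ is a unit vector orthogonal to $e_{j+1}$, since $\langle e_{j+1},\cJ e_{j+1}\rangle = \omega(e_{j+1},e_{j+1}) = 0$. Replacing $U$ by $U \oplus {\rm span}\{e_{j+1},\cJ e_{j+1}\}$ and iterating --- an induction of the same flavor as the last bullet of Lemma~\ref{dim_n} --- produces the desired completion.

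To finish, I would apply the completion lemma to $V$ and to $W$, obtaining orthonormal bases $e_1,\dots,e_n$ and $f_1,\dots,f_n$ with $V = {\rm span}\{e_1,\dots,e_j\}$ and $W = {\rm span}\{f_1,\dots,f_j\}$, where $j = \dim V = \dim W$, and such that $\{e_i,\cJ e_i\}_{i=1}^{n}$ and $\{f_i,\cJ f_i\}_{i=1}^{n}$ are orthonormal bases of $\bbbr^{2n}$. I then define $A$ by $Ae_i = f_i$ and $A\cJ e_i = \cJ f_i$ for $i = 1,\dots,n$: this carries one orthonormal basis to another, so $A \in {\rm O}(2n)$; it satisfies $A\cJ = \cJ A$ on basis vectors and therefore everywhere, so $A \in {\rm Sp}(2n)$; and $A(V) = W$ since $Ae_i = f_i$ for $i \le j$. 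Setting $\Phi(z,t) = (Az,t)$ completes the argument (one checks just as easily that $\Phi$ is in fact an automorphism of $\bbbh^n$). The only step that is not a formal verification is the completion lemma, and the main obstacle there is simply keeping track of $\cJ$-invariance; no serious difficulty arises.
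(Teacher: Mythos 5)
Your proposal is correct and follows essentially the same route as the paper: both constructions produce a linear map of $\bbbr^{2n}$ that is simultaneously orthogonal and symplectic (you phrase this as commuting with $\cJ$) by building adapted orthonormal bases $\{e_i,\cJ e_i\}$ and $\{f_i,\cJ f_i\}$ whose first $j$ vectors span $V$ and $W$, and then lift it to $\bbbh^n$ via $\Phi(z,t)=(Az,t)$, invoking \eqref{dK_sym}. The only cosmetic difference is that your inductive completion lemma re-proves inline what the paper gets by citing Lemma~\ref{dim_n} (extension to a Lagrangian) followed by Gram--Schmidt.
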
 
\begin{proof} Let $1\leq j \leq n$ denote the dimension of $V$ and $W$. By Lemma~\ref{dim_n}, 
we may find Lagrangian subspaces $V'$ and $W'$ so that $V$ is contained in $V'$ and $W$ is 
contained in $W'$. 
Since $V'$ is isotropic, the subspace $\cJ V'$ is orthogonal to
$V'$. Since $\dim V'=n$, the orthogonal sum of $V'$ and $\cJ V'$ is all of $\bbbr^{2n}$. The
same holds true for $W'$ and hence
$$
V'\oplus \cJ V'= W'\oplus \cJ W'=\bbbr^{2n}.
$$
Note that $\cJ$ is an orthogonal transformation of $\bbbr^{2n}$.
Using the Gram-Schmidt process, we may find orthonormal bases 
$$\mathcal{B}_V = \{v_1,\hdots,v_n\} \ \text{and}\ \mathcal{B}_W = \{w_1,\hdots w_n\}$$
of $V'$ and $W'$ respectively, such that 
\begin{itemize}
\item $\{v_1,\hdots, v_j\}$ and $\{w_1,\hdots,w_j\}$ are bases of $V$ and $W$, respectively,
\item $\mathcal{B}_V \cup \cJ\mathcal{B}_V $ and $\mathcal{B}_W \cup \cJ\mathcal{B}_W$ are orthonormal bases of 
$\bbbr^{2n}$,
\item $\mathcal{B}_V \cup \cJ\mathcal{B}_V $ and $\mathcal{B}_W \cup \cJ\mathcal{B}_W$ are symplectic bases of 
$\bbbr^{2n}$, i.e., 
$$
\omega(v_i,v_k) =  0 = \omega(\cJ v_i,\cJ v_k), \ \text{and}\ \omega(v_i,\cJ v_k) = 
\begin{cases} 1 & i=k,\\ 0 &  i \neq k.\\ \end{cases}
$$
$$
\omega(w_i,w_k) =  0 = \omega(\cJ w_i,\cJ w_k), \ \text{and}\ \omega(w_i,\cJ w_k) = 
\begin{cases} 1 & i=k,\\ 0 &  i \neq k.\\ \end{cases}
$$
\end{itemize}
We define $\phi \colon \bbbr^{2n} \to \bbbr^{2n}$ to be the (Euclidean) linear isometry defined by
\begin{align*} 
&\phi(v_i) = w_i, \  i=1,\hdots,n \\
					&\phi(\cJ v_i) = \cJ w_i \  i=1,\hdots, n, 
\end{align*}
and define $\Phi \colon \bbbh^n \to \bbbh^n$ by $\Phi(z,t) = (\phi(z),t).$
Since $\phi$ is linear, the fact that it maps a symplectic basis to a symplectic basis implies that it preserves the symplectic form.
Hence, considering points $(z,t)$ and $(z',t')$ in $\bbbh^n$ it follows from \eqref{dK_sym} that
\begin{equation}
\label{isometry} 
d_{\rm{K}}(\Phi(z,t),\Phi(z',t'))=d_{\rm{K}}((z,t),(z',t')),
\end{equation}
as desired. 
\end{proof}

\section{Rank of the derivative}
\label{RD}

The goal of this section is to prove Theorems~\ref{low rank lipschitz} and~\ref{low rank}.
Although the idea is geometric and elementary, the technical details of the proofs 
hide it like a needle in a haystack. Thus it is reasonable to spend a while trying to explain
the main idea before going into details.

We focus on Theorem~\ref{low rank lipschitz} first.
For simplicity suppose that $n=1$ and $k=2$. Let $E\subset \bbbr^2$ be a measurable set, and let $f \colon E \to \bbbh^1$ be a Lipschitz mapping. Suppose that on a set $K \subset E$ of positive $\H^2$-measure, the  rank of the
approximate derivative of $f$ is $2$. We may also assume that $f$ coincides with a $C^1$-mapping $g$ on $K$.
Choose a density point $x\in K$.
Since the approximate derivative $dg_x$ has rank $2$, it maps a small circle $S$ in the tangent space to $\bbbr^2$
onto an ellipse $dg_x(S)$. By Lemma \ref{3.2}, this ellipse lies in the horizontal plane at $f(x)$.
The projection of this ellipse on the $x_1y_1$ plane is also a non-degenerate ellipse (see \eqref{rank} below) 
and hence it bounds
a positive area.
As $g$ is continuously differentiable, the image $f(S\cap K)=g(S\cap K)$ is close to the ellipse $dg_x(S)$. Since $x$ is a density point of $K$, if the radius of the circle $S$ is sufficiently small, we may assume that $S\cap K$ accounts for most of the length of $S$. 
On the much shorter remaining set $S\setminus K$, the mapping $f$ is not necessarily defined.
However, we can extend $f$ from $S\cap K$ to all of $S$ as a Lipschitz curve $F \colon S \to \bbbh^1$, using Lemma~\ref{Jordan extension}.  The 
image of $F(S\setminus K)$ is also short in length. The resulting curve $F(S)$ is horizontal as it is Lipschitz.
Hence its projection onto the $x_1y_1$ plane bounds the oriented area zero.
However, the portion of the curve $F(S\setminus K)$ is short in length and $F(S \cap K)=f(S\cap K)$ is close to the ellipse
$dg_x(S)$, so the area of the projection of $F(S)$ does not differ
much from the area of the projection of $dg_x(S)$. Hence this area is positive, a contradiction.

This final step dealing with the area of the projection requires Stokes' theorem as described in Section~\ref{heisenberg}.
In the case $k>2$ we need to choose a suitable $2$-dimensional slice, and if $n>1$ we need to work with
areas of projections on all the planes $x_iy_i$, $i=1,\hdots,n$, at the same time. 
To do this, we use the symplectic form, which is the sum of volume forms in all the planes $x_iy_i$.

The idea of the proof of Theorem~\ref{low rank} is only slightly different.
The function $f$ is now defined on the whole domain $\Omega$ and not only on a measurable subset, so there is no
need to do an extension from a subset of a circle to the whole circle, making the argument more direct.
Again, for simplicity we suppose that $n=1$ and $k=2$. If the rank of the weak derivative $\wk df$ is $2$ at a point $x \in \bbbr^2$, then $\wk df_x$ maps a small circle $S$
onto the ellipse $\wk df_x(S)$ in the horizontal space, whose projection on the $x_1y_1$ plane bounds a region of non-zero oriented area.

However, the restriction of $f$ to generic small circles centered at $x$ is in the Sobolev space on the circle and hence absolutely continuous.
Thus it forms a horizontal curve, so its projection on the $x_1y_1$ plane bounds a region of zero oriented area.
On the other hand it follows from the Fubini theorem and standard tools from the theory of Sobolev spaces
that on generic small circles $S$ centered at $x$, the curve $f(S)$ is very close to the ellipse $df_x(S)$. Hence the
area bounded by the projection of $f|_S$ has to be close to the area bounded by the 
projection of the ellipse, again yielding a contradiction. 

As in the Lipschitz setting, when $k>2$ we need to choose a suitable $2$-dimensional slice and if $n>1$ we need to work with
areas of projections on all the planes $x_iy_i$ at the same time.

For the reminder of the section we fix positive integers $k>n$.
Points in $\bbbr^{2n+1}$ will be also denoted by $(x_1,y_1,\ldots,x_n,y_n,t),$ 
agreeing with the Heisenberg group notation. If $f$ is a mapping into $\bbbr^{2n+1}$, then we will write
$$
f=(f^{x_1},f^{y_1},\ldots,f^{x_n},f^{y_n},f^t).
$$
We denote by
$
\pi:\bbbr^{2n+1}\to\bbbr^{2n}
$
the orthogonal projection
\begin{equation}
\label{proj}
\pi(x_1,y_1,\ldots,x_n,y_n,t)=(x_1,y_1,\ldots,x_n,y_n).
\end{equation}
Vectors in $\bbbr^{2n}$ will often be denoted by
$$
v=(v^{x_1},v^{y_1},\ldots,v^{x_n},v^{y_n}) =
\sum_{i=1}^n v^{x_i}\frac{\partial}{\partial x_i} + v^{y_i} \frac{\partial}{\partial y_i}\, .
$$
For $p\in\bbbh^n$ we can regard $H_p\bbbh^n$ as a linear subspace of $T_p\bbbr^{2n+1}$. Then
\begin{equation}
\label{rank}
d\pi_p|_{H_p\bbbh^n}\colon H_p\bbbh^n\to T_{\pi(p)}\bbbr^{2n}
\quad
\mbox{is an isomorphism.}
\end{equation}
Indeed,
$$
d\pi_p(X_j(p))=\left.\frac{\partial}{\partial x_j}\right|_{\pi(p)},
\qquad
d\pi_p(Y_j(p))=\left.\frac{\partial}{\partial y_j}\right|_{\pi(p)},
$$
so the basis of $H_p\bbbh^n$ is mapped onto the canonical basis of
$T_{\pi(p)}\bbbr^{2n}$.

\subsection{Proof of Theorem~\ref{low rank lipschitz}}

By the previously stated exhaustion 
argument, we may assume that $E$ is compact and that $f$ is $L$-Lipschitz. Throughout
the proof constants $C$ will depend on $n$ and $\sup_{x\in E}\Vert f(x)\Vert_{\rm{K}}$ only.
The dependence on the last quantity will stem the fact that
the identity mapping from $f(E)\subset\bbbh^n$ to $\bbbr^{2n+1}$ is
Lipschitz continuous with the constant $C$ depending only on
$\sup_{x\in E}\Vert f(x)\Vert_{\rm{K}}$.

Since $f$ is approximately differentiable at almost every point of $E$, it suffices to prove that if
$g:\bbbr^{k}\to\bbbr^{2n+1}$ is of class $C^1$ and $K \subset E$ is such that $f|_K=g|_K$, then
\begin{equation}
\label{rank_dg}
\rank dg\leq n
\quad 
\mbox{$\H^k$-a.e. in $K$.}
\end{equation}
To this end it suffices to prove that the image of $d(\pi\circ g)_x$ is an isotropic
subspace of $T_{\pi(g(x))}\bbbr^{2n}$ for $\H^k$-almost every $x\in K$.
Indeed,~\eqref{rank_dg} will follow then from Lemma~\ref{dim_n},
Lemma~\ref{3.2}, and \eqref{rank} (in that order).

Suppose, by way of contradiction, that the set of points $x\in K$ such that the image of
$d(\pi\circ g)_x$ fails to be isotropic contains a set of positive measure.
In what follows we will identify $T_x\bbbr^k$ with $\bbbr^k$ through an obvious canonical
isomorphism. 
Let $\{e_1,\ldots,e_k\}$ be the canonical basis of $\bbbr^k$. If the image of 
$d(\pi\circ g)_x$ fails to be isotropic, then for some $i,j \in \{1,\hdots,k\}$, the pullback of $\omega$ by $\pi \circ g$ satisfies
\begin{equation}
\label{ij}
((\pi \circ g)^*\omega)(x)\left(e_i,e_j\right)\neq 0
\end{equation}
Thus for some fixed $i,j$, \eqref{ij}
holds for all points $x$ in a set $K$ of positive measure. Thus without loss of generality we may assume that
there is a $2$-dimensional subspace $V={\rm span}\,\{e_i,e_j\}$ of $\bbbr^k\simeq T_x\bbbr^k$
such that $d(\pi\circ g)_x(V)$ is non-isotropic for
every $x\in K$ and $\H^k(K)>0$. 

Let $v_1=e_i$ and $v_2=e_j$ be the given orthonormal basis of $V$. Our assumptions above mean that for every $x \in K$,
\begin{equation}
\label{non-isotropic formula} 
\sum_{i=1}^n dg^{x_i}_x \wedge dg^{y_i}_x (v_1, v_2) 
=
\left((\pi \circ g)^*{\omega}\right)(x)(v_1,v_2) \neq 0
\end{equation}
Fubini's theorem implies that there is a point $a\in \bbbr^k$ such that
$$
\H^2\left(K\cap (a+V)\right) >0
$$ 
 The restriction of 
$(\pi\circ g)^*\omega$ to $(a+V)$ defines a differential $2$-form 
on $(a+V)$ with continuous coefficients (because $g$ is of class $C^1$), 
i.e. there is a continuous function $c:(a+V)\to\bbbr$ such that
$$
(\pi\circ g|_{a+V})^*\omega(x)=c(x) dv_1\wedge dv_2.
$$
Clearly \eqref{non-isotropic formula} implies that $c(x)\neq 0$ for 
$x\in K \cap (a+V)$.
Continuity of the function $c$ yields that for every $x_0\in (a+V)$ we have
$$
\lim_{r \to 0} \frac{1}{\pi r^2} \int_{B(x_0,r) \cap (a+V)} (\pi\circ g|_{a+V})^*\omega= c(x_0). 
$$
Now Stokes' theorem implies that if $x_0\in K \cap (a+V)$, then for all sufficiently small $r>0$ we have
\begin{eqnarray}
\label{apply LDT} 
\lefteqn{0<\frac{|c(x_0)|\pi r^2}{2} \leq \left|  \int_{B(x_0,r) \cap (a+V)}(\pi\circ g|_{a+V})^*\omega \right|}\\ 
& = & 
\left| \frac{1}{2}\sum_{i=1}^n \int_{\partial{B}(x_0,r) \cap (a+V)}g^{x_i}dg^{y_i} -g^{y_i}dg^{x_i} \right|. \nonumber
\end{eqnarray}
Let $x_0 \in K \cap (a+V)$ be a density point of $K \cap (a+V)$. 
We could assume without loss of generality that $f(x_0)=g(x_0)=0$, because the left translation on the 
Heisenberg group is an isometry. This would slightly simplify notation -- we would not have to subtract $f(x_0)$
in the formulas that follow, but this would require the reader to check (or to believe) that indeed 
we are allowed to make this assumption. Instead we prefer direct computations without making 
this (clever) assumption.

Let $\eps>0$. By Fubini's Theorem, we may find a radius $r>0$ such that \eqref{apply LDT} holds and 
\begin{equation} 
\label{most of circle}
\H^1\left(\partial{B}(x_0,r) \cap ((a+V) \setminus K)\right) \leq \eps r.
\end{equation}
For ease of notation, we denote the disk $B(x_0,r) \cap (a+V)$ by $B_0$. 
Since $f$ is assumed to be  $L$-Lipschitz, by Lemma~\ref{Jordan extension} we may find an 
$L\pi/2$-Lipschitz mapping $F \colon \partial B_0  \to \bbbh^n$ such that 
$F|_{\partial B_0 \cap K}=f|_{\partial B_0 \cap K} = g|_{\partial B_0 \cap K}$. 
Since the identity map from $\bbbh^n$ to $\bbbr^{2n+1}$ is Lipschitz on compact sets, the mapping $F$ is also 
$CL$-Lipschitz when considered as a mapping into $\bbbr^{2n+1}$.  
Hence, for any $s \in \partial{B_0}$, the estimate \eqref{most of circle} implies that
\begin{equation}
\label{gaps} 
\dist_{\bbbr^{2n+1}}(F(s),f(\partial{B_0} \cap K)) \leq CL\eps r.
\end{equation}
Since $f$ is also $CL$-Lipschitz as a mapping into $\bbbr^{2n+1}$, \eqref{gaps} implies that for each 
$s \in \partial{B_0}$, the Euclidean distance between $F(s)$ and $f(x_0)$ is bounded by
$$
|F(s)-f(x_0)|\leq CLr .
$$ 
If we evaluate the integral below
in the arc-length parameterization $\gamma$ of $\partial B_0$, the terms $dF^{x_i}$ and
$dF^{y_i}$ will become $(F^{x_i}(\gamma(t))'$ and $(F^{y_i}(\gamma(t))'$ respectively and hence
they will be bounded by $CL$, because $F$ is $CL$-Lipschitz into $\bbbr^{2n+1}$. 
Thus \eqref{most of circle} implies that 
\begin{eqnarray}
\label{small int F}
\lefteqn{\left| \sum_{i=1}^n\int_{\partial{B_0}\setminus K} 
(F^{x_i}-f^{x_i}(x_0))dF^{y_i} - (F^{y_i}-f^{y_i}(x_0))dF^{x_i}\right|} \phantom{aaaaaaaaa}  \\ 
& \leq & 
CLr\cdot CL\cdot\H^1(\partial{B}_0 \setminus K) \leq C^2L^2r^2\eps. \nonumber
\end{eqnarray}
Since $F$, as a Lipschitz curve in $\bbbh^n$,  is horizontal, \eqref{seven} yields 
\begin{eqnarray}
\label{use horiz} 
\lefteqn{ \sum_{i=1}^n\int_{\partial{B}_0} 
(F^{x_i}-f^{x_i}(x_0))dF^{y_i} - (F^{y_i}-f^{x_i}(x_0))dF^{x_i}}\\ 
& = & 
\sum_{i=1}^n\int_{\partial{B}_0} 
F^{x_i}dF^{y_i} - F^{y_i}dF^{x_i} = 
-\frac{1}{2}\int_{\partial B_0} dF^t =0. \nonumber
\end{eqnarray}
As a result of \eqref{small int F}, \eqref{use horiz}, the fact that 
$F|_{\partial{B_0} \cap K}=g|_{\partial{B_0} \cap K}$, 
and $f(x_0)=g(x_0)$ we conclude that  
\begin{equation}
\label{y1}
\left| \sum_{i=1}^n\int_{\partial{B_0} \cap K} (g^{x_i}-g^{x_i}(x_0))dg^{y_i} - 
(g^{y_i}-g^{y_i}(x_0))dg^{x_i}\right| \leq C^2 L^2 r^2\eps.
\end{equation}
On the other hand, $g$ is also Lipschitz as a mapping into $\bbbr^{2n+1}$
with some constant $L'$ in a bounded region near $x_0$. Hence
the same reasoning that led to \eqref{small int F} also shows that 
\begin{equation}
\label{y2}
\left| \sum_{i=1}^n\int_{\partial{B_0} \setminus K} 
(g^{x_i}-g^{x_i}(x_0))dg^{y_i} - (g^{y_i}-g^{y_i}(x_0))dg^{x_i}\right| \leq C(L')^2 r^2\eps.
\end{equation}
After choosing $\eps$ small enough, the inequalities \eqref{y1} and \eqref{y2} lead to a contradiction with \eqref{apply LDT},
because in the last integral at \eqref{apply LDT} we can subtract
$g^{x_i}(x_0)$ from $g^{x_i}$ and $g^{y_i}(x_0)$ from $g^{y_i}$ without changing
its value. The proof of Theorem~\ref{low rank lipschitz} is complete.
\hfill $\Box$


\subsection{Proof of Theorem~\ref{low rank}}
Let us first briefly recall basic facts from the theory of Sobolev spaces that will be used in the proof.
For more details, see Chapters~4 and ~6 of \cite{EG}. If $u \in W^{1,1}_{\loc}(\bbbr^k,\bbbr^{2n})$, 
and $V$ is a linear subspace of $\bbbr^k$, then 
the restriction of $u$ to almost all subspaces parallel to $V$ behaves nicely. Namely,
for almost all $a\in\bbbr^k$ 
$$
u|_{a+V} \in W^{1,1}_{\loc}(a+V; \bbbr^{2n}).
$$
Moreover the weak derivative of the restriction of $u$ is the restriction of the  weak derivative of $u$. 
This follows easily from Fubini's theorem if $u$ is smooth, and in the general case it follows from 
Fubini's theorem applied to a smooth approximation of $u$.
Clearly the result can be generalized to the case of functions that are defined on an open subset
$\Omega\subset\bbbr^k$, in which case we consider restrictions to $\Omega\cap (a+V)$.
For $x_0\in\bbbr^k$, a similar result
holds for restrictions to spheres centered at $x_0$:
\begin{equation}
\label{res_sp}
u|_{S^{k-1}(x_0,r)} \in W^{1,1}(S^{k-1}(x_0,r);\bbbr^{2n})
\qquad
\mbox{for almost all $r>0$.}
\end{equation}
Sobolev mappings defined on one-dimensional manifolds are absolutely continuous. In particular, if $k=2$ and 
$\gamma:[a,b]\to\bbbr^2$ parameterizes an arc of a circle $S$, and the restriction of $u$ to $S$ is again a Sobolev mapping, then
\begin{equation}
\label{upper}
|u(\gamma(b))-u(\gamma(a))|\leq \int_\gamma |\nabla u|.
\end{equation}
To be precise, we also need to assume that the weak derivative of the restriction of $u$
to $S$ is the restriction of the weak derivative of $u$; this is the case for almost every circle centered at a given point. 

We will also need the following result of Calder\'on and Zygmund, which can be deduced from \cite[Theorem~6.1.2]{EG} and H\"older's inequality.
\begin{lemma}
\label{CZ}
If $u\in W^{1,1}(\bbbr^k;\bbbr^{2n})$, then 
$$
\lim_{r\to 0} \frac{1}{r}\, \barint_{B(x,r)} |u(y)-u(x)-du_x(y-x)|\, dy = 0
$$
for almost all $x\in\bbbr^k$.
\end{lemma}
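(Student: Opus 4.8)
\emph{Proof proposal.} The plan is to reduce to the scalar case and then invoke the classical $L^{1^{*}}$-differentiability of Sobolev functions. Writing $u=(u^1,\dots,u^{2n})$, one has
$$
|u(y)-u(x)-du_x(y-x)|\le\sum_{j=1}^{2n}|u^j(y)-u^j(x)-du^j_x(y-x)|,
$$
so it suffices to prove the statement for a single scalar function $u\in W^{1,1}(\bbbr^k)$ (here $du_x$ is understood as the weak derivative $\wk du_x=\nabla u(x)$, which exists for almost every $x$). If $k=1$ the assertion is just the classical fact that a one-variable $W^{1,1}$ function is absolutely continuous, hence differentiable almost everywhere; so assume $k\ge 2$.

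Next I would apply \cite[Theorem~6.1.2]{EG} with $p=1$. Setting $1^{*}=k/(k-1)$, that result asserts that $u$ is \emph{$L^{1^{*}}$-differentiable} at almost every point, with the relevant affine approximation being $y\mapsto u(x)+du_x(y-x)$; that is, for $\H^k$-almost every $x\in\bbbr^k$,
$$
\lim_{r\to0}\frac1r\left(\barint_{B(x,r)}|u(y)-u(x)-du_x(y-x)|^{1^{*}}\,dy\right)^{1/1^{*}}=0.
$$
It then remains only to pass from this $L^{1^{*}}$-average to the $L^1$-average in the statement. Since $1^{*}>1$, the function $t\mapsto t^{1^{*}}$ is convex, so Jensen's inequality (equivalently, H\"older's inequality with exponents $1^{*}$ and $k$) gives
$$
\barint_{B(x,r)}|u(y)-u(x)-du_x(y-x)|\,dy\le\left(\barint_{B(x,r)}|u(y)-u(x)-du_x(y-x)|^{1^{*}}\,dy\right)^{1/1^{*}}.
$$
Dividing by $r$ and combining with the previous display finishes the proof.

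I do not expect a genuine obstacle: essentially all the analytic content is carried by the cited $L^{1^{*}}$-differentiability theorem, and the only points that need care are the choice of the Sobolev exponent $1^{*}=k/(k-1)$ and the trivial vector-valued reduction. For completeness, I would also sketch a self-contained route not relying on \cite{EG}. Fix a Lebesgue point $x$ of $u$ at which also $\eps(\rho):=\barint_{B(x,\rho)}|\nabla u(y)-\nabla u(x)|\,dy\to0$ as $\rho\to0$ (almost every $x$ qualifies), and put $w(y)=u(y)-u(x)-\nabla u(x)\cdot(y-x)$, so that $\nabla w=\nabla u-\nabla u(x)$ and $\barint_{B(x,\rho)}|\nabla w|\,dy=\eps(\rho)$. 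Writing $w_{B(x,r)}=\barint_{B(x,r)}w$, the Sobolev--Poincar\'e inequality yields $\barint_{B(x,r)}|w-w_{B(x,r)}|\,dy\le Cr\,\eps(r)$, while a telescoping estimate over the radii $2^{-j}r$, together with $w_{B(x,\rho)}\to w(x)=0$, gives $|w_{B(x,r)}|\le Cr\sum_{j\ge0}2^{-j}\eps(2^{-j}r)=o(r)$ (the series tending to $0$ by dominated convergence). Adding the two bounds gives $\barint_{B(x,r)}|w|\,dy=o(r)$, which is precisely the claim; the one mildly delicate point in this route is the dominated-convergence bookkeeping for the telescoping series.
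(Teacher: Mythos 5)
Your main argument is exactly the paper's: the paper gives no detailed proof of Lemma~\ref{CZ}, stating only that it follows from \cite[Theorem~6.1.2]{EG} (the $L^{1^*}$-differentiability of $W^{1,1}$ functions) together with H\"older's inequality, which is precisely your reduction, and your handling of the trivial vector-valued and $k=1$ cases is fine. Your additional self-contained Poincar\'e-plus-telescoping sketch is a correct alternative but is not needed to match the paper.
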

Now we are ready to proceed to the proof of Theorem \ref{low rank}. By the usual exhaustion argument, we may assume that $f \in W^{1,1}(\Omega;\bbbr^{2n+1})$, and that it satisfies the weak contact equation
\begin{equation} 
\label{Sob weak contact 2} 
\im \wk df_x \subset \ker \alpha(f(x)) \quad \text{for $\H^k$-almost every $x \in \Omega$}.
\end{equation}
We wish to prove that for $\H^k$-almost every point $x \in \Omega$, the rank of $\wk df_x$ is no greater than $n$.
 
Set $u= \pi \circ f$, so that $u \in W^{1,1}(\Omega;\bbbr^{2n}).$ For ease of notation, we denote the weak derivative $\wk du$ simply as $du$. As before, it suffices to prove that the image of
$du_x$ is isotropic for almost all $x\in\Omega$.  Suppose that
this is not the case. Then again, following the arguments of the proof of 
Theorem~\ref{low rank lipschitz} we can find coordinate directions $e_i,e_j$ in
$\bbbr^k$ and $a\in\bbbr^k$ such that the image of $du$ along the two-dimensional slice $\Omega\cap (a+V)$,
where $V={\rm span}\, \{e_1,e_2\}$ is non-isotropic on a set 
$A\subset\Omega\cap (a+V)$ of positive measure.
This shows that we can assume that $k=2$, and 
that the image of 
$$
du_x:T_x\bbbr^2\to T_{\pi\circ f(x)}\bbbr^{2n}
$$
is non-isotropic for $x\in A\subset\Omega$, $\H^2(A)>0$.
In what follows $B(x,r)$ will denote a two-dimensional disc
and $S(x,r)$ its boundary, equipped with the usual length measure $\sigma$.

Just as discussed above, if $x\in\Omega$, then for almost all $0<r<\dist (x,\partial\Omega)$,
$f$ restricted to the circle $S(x,r)$ is absolutely continuous (as it is a Sobolev mapping). 
By Fubini's theorem, it also satisfies the contact condition for almost every $r$, and so for almost every $r$, the restriction $f|_{S(x,r)}$ defines a closed horizontal curve.

It follows from Lemma~\ref{CZ}
and the fact that almost all points are Lebesgue points of the mapping $x \mapsto du_x$
that 
$$
\lim_{r\to 0} \frac{1}{r}\, \barint_{B(x,r)} |u(y)-u(x)-du_x(y-x)|\, dy =0
$$
and
$$
\lim_{r\to 0} \barint_{B(x,r)}|du_y-du_x|\, dy = 0
$$
for almost all $x\in\Omega$. Fix such $x_0\in A\subset\Omega$. Let $\eps>0$.
Then there is $R>0$ (which depends on $\eps$) such that
\begin{equation}
\label{natalia}
\int_{B(x_0,R)} |u(y)-u(x_0)-du_{x_0}(y-x_0)|\, dy <\eps R^3,
\end{equation}
and
\begin{equation}
\label{joanna}
\int_{B(x_0,R)}|du_y-du_{x_0}|\, dy <\eps R^2.
\end{equation}
Since for any non-negative function $\alpha \colon B(x_0,R) \to [0,\infty]$,
$$
\int_{B(x_0,R)}\alpha\ dy\geq \int_{R/2}^R \int_{S(x_0,r)}\alpha\ d\sigma\, dr
$$
it follows from \eqref{natalia} and \eqref{joanna} that there are subsets
$K_1,K_2\subset [R/2,R]$, each of measure at least $3R/8$
such that
\begin{equation}
\label{e1}
\int_{S(x_0,r)} |u(y)-u(x_0)-du_{x_0}(y-x_0)|\, d\sigma(y) <8\eps R^2
\quad
\mbox{for $r\in K_1$}
\end{equation}
and
\begin{equation}
\label{e2}
\int_{S(x_0,r)} |du_y-du_{x_0}|\, d\sigma(y) < 8\eps R
\quad
\mbox{for $r\in K_2$.}
\end{equation}
The set $K_1\cap K_2\subset [R/2,R]$ has length greater than or equal to $R/4$
and for $r\in K_1\cap K_2$ both inequalities \eqref{e1} and \eqref{e2}
are satisfied. Note that $R\leq 2r$ for $r\in K_1\cap K_2$, so 
\begin{equation}
\label{e3}
\int_{S(x_0,r)} |u(y)-u(x_0)-du_{x_0}(y-x_0)|\, d\sigma(y) <32\eps r^2
\end{equation}
and
\begin{equation}
\label{e4}
\int_{S(x_0,r)} |du_y-du_{x_0}|\, d\sigma(y) < 16\eps r.
\end{equation}
The last inequality also gives
\begin{equation}
\label{e44}
\int_{S(x_0,r)} |du_y|\, d\sigma(y) <C r.
\end{equation}

Moreover, we may choose the radius $r\in K_1\cap K_2$ so that the function $f$ restricted to 
the circle $S(x_0,r)$ is in the appropriate Sobolev space and defines a closed horizontal curve.

The idea of the remaining part of the proof is as follows.
The fact that the image of
the derivative at $x_0\in A$ is non-isotropic means that 
$du_{x_0}$ maps circles to ellipses in $du_{x_0}(\bbbr^2)$ with the property that
the the sum of the oriented areas of projections on the planes
$x_iy_i$ is different than zero. Thus the ellipse parametrized by
\begin{equation}
\label{ellipse}
S(x_0,r)\ni y\mapsto u(x_0)+du_{x_0}(y-x_0)
\end{equation}
has this property. Conditions \eqref{e3} and \eqref{e4} mean that the curve
\begin{equation}
\label{michal}
S(x_0,r)\ni y\mapsto u(y)
\end{equation}
is close to the ellipse \eqref{ellipse} in the Sobolev norm. This implies
that the sum of the areas bounded by the projections of the curve \eqref{michal}
is close to the corresponding sum for the ellipse \eqref{ellipse} and hence is
different than zero. This, however, contradicts the fact that $f$ restricted 
to the circle $S(x_0,r)$ is horizontal. Now we will provide details
to support these claims.

The curves \eqref{ellipse} and \eqref{michal} are close in the Sobolev norm, but in dimension $1$ Sobolev functions are absolutely continuous, so actually the curves
are close in the supremum norm. This is a version of the Sobolev embedding theorem. We will provide a short proof
adapted to our particular situation. If
$$
g(y)=u(x_0) + du_{x_0}(y-x_0),
$$
then \eqref{e3} yields
\begin{equation}
\label{e5}
\barint_{S(x_0,r)} |u(z)-g(z)|\, d\sigma(z) <C\eps r.
\end{equation}
Moreover, the inequality \eqref{upper} applied to $u-g$ along with the inequality \eqref{e4} yield that for
any $y,z\in S(x_0,r)$
\begin{equation}
\label{e6}
|(u(y)-g(y))-(u(z)-g(z))| \leq
\int_{S(x_0,r)} |du_w-du_{x_0}|\, d\sigma(w) <C\eps r.
\end{equation}
Thus taking the average in \eqref{e6} with respect to 
$z\in S(x_0,r)$ and using \eqref{e5} we see that for 
any $y\in S(x_0,r)$ we have
\begin{eqnarray*}
|u(y)-g(y)| 
& \leq &
\barint_{S(x_0,r)} |(u(y)-g(y))-(u(z)-g(z))|\, d\sigma(z)\\ 
& + &
\barint_{S(x_0,r)} |u(z)-g(z)|\, d\sigma(z) <C\eps r,
\end{eqnarray*}
i.e.
\begin{equation}
\label{e7}
\sup_{y\in S(x_0,r)} |u(y)-g(y)| <C\eps r.
\end{equation}
The sum of oriented areas of projections of the ellipse $g(S(x_0,r))$
equals $C'r^2$ for some $C'>0$. 
The constant $C'$ depends only on the choice of $x_0\in A\subset\Omega$
and hence it does not depend on $\eps$.
Using Stokes' theorem
we can write this sum of areas as an integral over the circle
\begin{eqnarray}
\label{r1}
\lefteqn{C'r^2 
 = 
\frac{1}{2}\sum_{i=1}^n \int_{S(x_0,r)} g^{x_i}dg^{y_i} - g^{y_i}dg^{x_i}}\\
& = &
\frac{1}{2}\sum_{i=1}^n\int_{S(x_0,r)} (g^{x_i}-g^{x_i}(x_0))dg^{y_i} - (g^{y_i}-g^{y_i}(x_0))dg^{x_i}. \nonumber
\end{eqnarray}
On the other hand the curve $f$ restricted to the circle $S(x_0,r)$ is horizontal
and hence $u=\pi\circ f$ satisfies
\begin{eqnarray}
\label{r2}
\lefteqn{\sum_{i=1}^n \int_{S(x_0,r)} (u^{x_i}-u^{x_i}(x_0))du^{y_i} -(u^{y_i}-u^{y_i}(x_0))du^{x_i}} \\
& = &
\sum_{i=1}^n \int_{S(x_0,r)} u^{x_i}du^{y_i} -u^{y_i}du^{x_i} = 0. \nonumber
\end{eqnarray}
Subtracting \eqref{r2} from \eqref{r1} and using the fact that $g(x_0)=u(x_0)$ yields
\begin{eqnarray*}
2C'r^2 
& \leq &
\sum_{i=1}^n \left|\int_{S(x_0,r)} (g^{x_i}-g^{x_i}(x_0))dg^{y_i} -(u^{x_i}-u^{x_i}(x_0))du^{y_i}\right| \\
& + &
\sum_{i=1}^n \left|\int_{S(x_0,r)} (g^{y_i}-g^{y_i}(x_0))dg^{x_i} -(u^{y_i}-u^{y_i}(x_0))du^{x_i}\right| \\
& \leq &
C'' \left(\int_{S(x_0,r)} |g-g(x_0)|\, |dg-du| + \int_{S(x_0,r)} |g-u|\, |du| \right) \\
& < &
C'''\eps r^2.
\end{eqnarray*}
The estimate of the first integral in the last inequality 
follows from the fact that $|g(y)-g(x_0)|\leq Cr$ for $y\in S(x_0,r)$,
and the inequality \eqref{e4} (because $dg_y=du_{x_0}$), while in the estimate of
the second integral we used \eqref{e7} and \eqref{e44}. Taking $\eps>0$ sufficiently small
leads to a contradiction.
\hfill $\Box$

\section{Approximately Lipschitz mappings into the Heisenberg group}
\label{ih}

In this section we will prove Theorem~\ref{bd_lip} as well as Corollaries~ \ref{small image Sobolev} and \ref{super critical}. The proofs will 
use the theory of Sobolev mappings into metric spaces, which we describe first.
For more details on the approach 
presented here, see \cite{DHLT,hajlaszt}.

In some of results in this section, we assume that the domain $\Omega\subset\bbbr^k$ is bounded and has smooth boundary. We make these assumptions only to guarantee the validity of an appropriate Poincar\'e inequality, and that constant functions are integrable over $\Omega$; the argument provided here easily passes to a more general setting. 
 
Let $\Omega$ be an open set in $\bbbr^k$. The Sobolev space $W^{1,p}(\Omega;\ell^\infty)$
of functions with values in the Banach space $\ell^\infty$ of bounded sequences
can be defined with the notion of the Bochner
integral and weak derivatives. Every separable metric space $(X,d)$ can be isometrically embedded into $\ell^\infty$.
For example, one can use the well-known Kuratowski embedding. Let
$$
\kappa:X\to\ell^\infty
$$
be an isometric embedding. The Sobolev space of mappings with values into the metric space $X$ is defined 
as follows
\begin{equation}
\label{into_X}
W^{1,p}(\Omega;X)=\{ u:\Omega\to X:\, \kappa\circ u\in W^{1,p}(\Omega;\ell^\infty)\}.
\end{equation}
It turns out that this definition does not depend on the particular choice of the isometric embedding; the space can also be characterized in the intrinsic terms that do not refer to any embedding.
In particular the definition \eqref{into_X} can be used to define the space of Sobolev mappings into
the Heisenberg group, $W^{1,p}(\Omega,\bbbh^n)$. 
\begin{proposition}
\label{new}
Let $\Omega\subset\bbbr^k$ be a bounded domain with smooth boundary. For $1 \leq p < \infty$, if
$f\in W^{1,p}(\Omega,\bbbr^{2n+1})$ satisfies the weak contact equation
\eqref{Sob weak contact}, then $f\in W^{1,p}(\Omega,\bbbh^n)$.
\end{proposition}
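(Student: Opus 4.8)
The plan is to show that the Kuratowski embedding $\kappa \colon \bbbh^n \to \ell^\infty$ composed with $f$ lies in $W^{1,p}(\Omega;\ell^\infty)$, which by definition \eqref{into_X} is exactly the claim $f \in W^{1,p}(\Omega;\bbbh^n)$. The essential point is to compare the Euclidean structure on $\bbbr^{2n+1}$, in which we already know $f$ is Sobolev, with the Carnot-Carath\'eodory (equivalently, Kor\'anyi) metric. First I would recall that by \eqref{SReq1}, on bounded subsets of $\bbbh^n$ the Kor\'anyi distance is controlled by $C|p-q|^{1/2}$; since $f \in W^{1,p}(\Omega;\bbbr^{2n+1})$ with $\Omega$ bounded, $f(\Omega)$ lies in a bounded set, so $\kappa \circ f$ is at least in $L^p$ (indeed, bounded). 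The real work is to produce a $p$-integrable upper gradient, or equivalently to show that the difference quotients of $\kappa \circ f$ are controlled in $L^p$.

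The key mechanism is the pointwise Poincar\'e-type inequality for Sobolev functions, which is the reason the hypotheses ``bounded domain with smooth boundary'' are imposed: for $f \in W^{1,p}(\Omega;\bbbr^{2n+1})$ one has, for a.e.\ $x,y \in \Omega$,
$$
|f(x)-f(y)| \leq C|x-y|\bigl(\M_\Omega|\nabla f|(x) + \M_\Omega|\nabla f|(y)\bigr),
$$
where $\M_\Omega$ is a (restricted) maximal function; this is the standard estimate already invoked in Section~\ref{AD}. The point I would then exploit is that along almost every segment $[x,y] \subset \Omega$, the curve $t \mapsto f(x + t(y-x))$ is absolutely continuous in $\bbbr^{2n+1}$ and, by the weak contact hypothesis \eqref{Sob weak contact} combined with Fubini (restriction of $f$ to a.e.\ line is Sobolev with weak derivative the restriction of $\wk df$, which lies in $\ker\alpha(f)$), it is in fact a \emph{horizontal} curve in $\bbbh^n$. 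Consequently its Carnot-Carath\'eodory length equals the Euclidean length of the projection $\pi \circ f$ along that segment, which is at most $\int_{[x,y]} |\nabla f|$. Therefore
$$
d_{cc}(f(x),f(y)) \leq \ell_{\bbbh}\bigl(f|_{[x,y]}\bigr) \leq \int_{[x,y]} |\nabla f| \leq C|x-y|\bigl(\M_\Omega|\nabla f|(x) + \M_\Omega|\nabla f|(y)\bigr)
$$
for a.e.\ $x,y \in \Omega$, exactly the Haj\l asz-type pointwise inequality that characterizes membership in $W^{1,p}(\Omega;\bbbh^n)$ (the maximal operator $\M_\Omega$ is bounded on $L^p$ for $p > 1$, and for $p = 1$ one argues via the equivalent upper-gradient formulation or a truncation as in Section~\ref{AD}). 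Since $\kappa$ is an isometry, the same inequality holds for $\kappa \circ f$ with the $\ell^\infty$ norm on the left, giving $\kappa \circ f \in W^{1,p}(\Omega;\ell^\infty)$ with upper gradient a constant multiple of $\M_\Omega|\nabla f| \in L^p$, hence $f \in W^{1,p}(\Omega;\bbbh^n)$.

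The step I expect to be the main obstacle is the rigorous justification that $f$ restricted to almost every line segment is a horizontal curve: one must combine the Fubini-type theorem for restrictions of Sobolev functions to parallel lines (so that $f|_{a+V}$ is Sobolev with the restricted weak derivative) with the weak contact equation, which holds only almost everywhere in $\Omega$, to conclude that the a.e.-defined weak derivative of the restriction is genuinely tangent to $H\bbbh^n$ along a.e.\ segment; this requires care with null sets and the precise notion of a horizontal Sobolev curve, but is entirely parallel to the line-restriction arguments already used in the proofs of Lemma~\ref{3.2} and Theorem~\ref{low rank}. The case $p=1$ needs a small extra argument since the maximal function is not $L^1$-bounded, but this is handled by the standard device of working on the sub-level sets $\{\M_\Omega|\nabla f| \leq \lambda\}$, on which $f$ is Lipschitz into $\bbbh^n$, exactly as recalled in Section~\ref{AD}; alternatively one invokes the intrinsic upper-gradient characterization of $W^{1,1}(\Omega;\bbbh^n)$, for which $\M_\Omega|\nabla f|$ (or even $C|\nabla f|$ itself, via the segment estimate without maximal functions) serves as a $p$-integrable upper gradient.
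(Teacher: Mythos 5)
Your overall framework (Kuratowski embedding, horizontality of $f$ along a.e.\ line via Fubini and \eqref{Sob weak contact}, and the comparison $d_{cc}\le$ Euclidean length along such lines) is the same mechanism the paper uses, but two steps in your write-up are genuinely flawed. First, the $L^p$ step: from $f\in W^{1,p}(\Omega;\bbbr^{2n+1})$ on a bounded $\Omega$ you cannot conclude that $f(\Omega)$ is bounded (this fails for $p\le k$), and this is not a side issue — the Remark following the proposition points out that the result was already known under the extra assumption that $f$ is bounded, so assuming boundedness begs precisely the point of the statement. The correct and easy fix, which the paper uses, is the global inequality $\Vert p\Vert_{\rm{K}}\le C\max\{1,\Vert p\Vert_{\bbbr^{2n+1}}\}$, which turns $L^p$-integrability of $f$ in the Euclidean sense into $L^p$-integrability of $\kappa\circ f$ without any boundedness.

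Second, the displayed chain
$d_{cc}(f(x),f(y))\le\int_{[x,y]}|\nabla f|\le C|x-y|\bigl(\M|\nabla f|(x)+\M|\nabla f|(y)\bigr)$
is invalid: the one-dimensional integral of $|\nabla f|$ along a single chord is not controlled by $k$-dimensional maximal averages at the endpoints (take $|\nabla f|$ equal to a huge constant on a small ball centered at the midpoint of the segment; the chord integral is of order $A\eps$ while the right-hand side is of order $A\eps^k/|x-y|^{k-1}$). To reach a Haj\l asz-type pointwise inequality you would need a genuine Poincar\'e/chaining argument (integrate the segment estimate over pairs of points in balls and telescope), and even then the maximal-function route gives nothing at $p=1$, which is included in the statement; your proposed $p=1$ fixes are only sketched, and restriction to sub-level sets of $\M|\nabla f|$ yields the Lipschitz-on-large-sets conclusion of Theorem~\ref{bd_lip}, not membership in $W^{1,1}(\Omega;\bbbh^n)$. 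The paper avoids all of this: it shows that on a.e.\ coordinate line the curve $\kappa\circ f$ is absolutely continuous with $w^*$-derivative bounded a.e.\ by $\Vert\gamma'\Vert_{\bbbr^{2n+1}}$ (using \eqref{907}), so the $w^*$-partial derivatives of $\kappa\circ f$ are dominated by the Euclidean weak partial derivatives of $f$, and then invokes the coordinate-line characterization of $W^{1,p}(\Omega;\ell^\infty)$ (Lemma~2.12 of \cite{hajlaszt}); no maximal functions and no case distinction in $p$ are needed. Your closing remark about an ``upper-gradient characterization with $C|\nabla f|$ itself'' points in this direction, but as written it is not carried out, and upper-gradient arguments require control along $p$-a.e.\ curve rather than merely a.e.\ coordinate segment, so it cannot be cited as a substitute without further work.
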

\begin{remark} 
This result was proved in \cite[Proposition~6.8]{DHLT},
but under the additional assumption that $f$ is bounded.
\end{remark}
\begin{proof}
Let $\kappa:\bbbh^n\to\ell^\infty$ be an isometric embedding. It suffices to
show that $\kappa\circ f\in W^{1,p}(\Omega;\ell^\infty)$.

We begin by showing that $\kappa \circ f \in L^{p}(\Omega;\ell^\infty)$. 
Since $\Omega$ is bounded and $\kappa$ is an isometry, this will follow once we have shown that
$$
\int_{\Omega} \Vert f(x)\Vert^p_{\rm{K}} \ d\H_{\bbbr^k}^k(x)<\infty.
$$
This, in turn, follows from the assumption that $f \in L^p(\Omega;\bbbr^{2n+1})$, since there exists a number $C \geq 1$, depending only on $n$, such that for any $p \in \bbbh^n$,
$$
\Vert p\Vert_{\rm{K}} \leq C\max\{1,\Vert p\Vert_{\bbbr^{2n+1}}\}.
$$
We now show that $\kappa \circ f$ has $p$-integrable weak partial derivatives. The mapping $f$ is absolutely continuous on almost all line
segments $\ell:[0,L]\to\Omega$ parameterized by the arc length that are parallel to coordinate directions. Since
$f$ satisfies the contact equation, 
$$
\gamma=(\gamma_{x_1},\gamma_{y_1},\ldots,\gamma_{x_n},\gamma_{y_n},\gamma_t)=
f\circ\ell:[0,L]\to\bbbh^n
$$
is horizontal for almost all such line segments $\ell$. Fix such a segment.
Recall that the Carnot-Carath\'eodory length $\ell_{\bbbh}(\gamma)$ is no greater than 
the Euclidean length of $\gamma$ (see Section~\ref{heisenberg}).
Hence for any pair of points $t_1<t_2$ in $[0,L]$,
\begin{equation}
\label{907}
d_{cc}(\gamma(t_1),\gamma(t_2))\leq \int_{t_1}^{t_2} \Vert \gamma'(s)\Vert_H\, ds \leq
\int_{t_1}^{t_2}\Vert \gamma'(s)\Vert_{\bbbr^{2n+1}}\, ds.
\end{equation}
Since $\kappa$ is an isometric embedding, this
implies that the curve $\kappa\circ\gamma:[0,L]\to\ell^\infty$ is absolutely continuous
and so the $w^*$-derivative $(\kappa\circ\gamma)'(s)\in\ell^\infty$ exists at almost all 
$s\in [0,L]$, \cite[Lemma~2.8]{hajlaszt}.
The $w^*$-derivative is a $w^*$-limit of difference quotients, hence it follows from \eqref{907}
that 
$$
\Vert (\kappa\circ\gamma)'(s)\Vert_{\ell^\infty}\leq \Vert \gamma'(s)\Vert_{\bbbr^{2n+1}}
$$
almost everywhere. This means the $w^*$-partial derivatives of $\kappa\circ f$ exist
a.e.\ in $\Omega$ and they are bounded by the Euclidean weak partial derivatives of 
$f:\Omega\to\bbbr^{2n+1}$. 
Hence Lemma~2.12 in \cite{hajlaszt} yields that
$f\in W^{1,p}(\Omega,\bbbh^n)$. 
\end{proof}

A stronger version of the following approximation result was proved in
\cite[Proposition~5.4]{DHLT}, \cite[Proposition~1.2]{hajlasz3}.
\begin{proposition} 
Let $\Omega \subset \bbbr^k$ be a bounded domain with smooth boundary.  If $u\in W^{1,1}(\Omega;\ell^\infty)$, 
then for every $\eps>0$ there is a Lipschitz mapping $g\in\lip(\Omega,\ell^\infty)$
such that $\H^k(\{x\in\Omega:\, u(x)\neq g(x)\})<\eps$.
\end{proposition}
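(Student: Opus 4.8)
The plan is to adapt the classical Lusin-type approximation of Sobolev functions by Lipschitz functions to the $\ell^\infty$-valued setting. Since $u$ is Bochner integrable, its essential range is separable, so we may regard $u$ as taking values in a fixed separable subspace of $\ell^\infty$ and write $u=(u_i)_i$ for its coordinate functions. Then each $u_i\in W^{1,1}(\Omega)$, and the function $g:=\|\nabla u\|\in L^1(\Omega)$ (the norm of the weak differential) dominates $|\nabla u_i|$ for every $i$ almost everywhere. Because $\Omega$ is a bounded domain with smooth boundary it is a $W^{1,1}$-extension domain, and a standard extension operator, being given by an explicit kernel, applies coordinatewise to produce $v=(v_i)\in W^{1,1}(\bbbr^k;\ell^\infty)$ with $v=u$ on $\Omega$ and $\|\nabla v\|\in L^1(\bbbr^k)$ controlled by $\|u\|_{W^{1,1}(\Omega;\ell^\infty)}$. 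Replacing $u$ by $v$, we may assume $\Omega=\bbbr^k$; alternatively one can avoid extension altogether and argue directly in the John domain $\Omega$ using a chaining/Poincar\'e argument, at the cost of replacing $\M$ below by a restricted maximal operator.

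Next I would establish the vector-valued pointwise estimate
$$
\|u(x)-u(y)\|_{\ell^\infty}\le C(k)\,|x-y|\,\big(\M g(x)+\M g(y)\big)
\qquad\text{for a.e.\ } x,y\in\bbbr^k,
$$
where $u$ denotes its precise representative, defined at each Lebesgue point. For a ball $B=B(x,r)$ with $x\in B$ one has the potential estimate $\|u(x)-u_B\|_{\ell^\infty}\le C\int_B g(y)\,|x-y|^{1-k}\,dy$; this is proved exactly as in the scalar case by integrating along rays through $x$, using that $u$ is absolutely continuous on almost every line with $g$ as a majorant of the length of its derivative (this is where the $w^*$-derivative facts from Section~\ref{ih} are used). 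Splitting the integral into dyadic annuli about $x$ gives $\|u(x)-u_{B(x,r)}\|_{\ell^\infty}\le C r\,\M g(x)$; applying this with $r=|x-y|$ to both $x$ and $y$ and using the triangle inequality yields the displayed inequality.

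For $t>0$ set $E_t=\{x\in\bbbr^k:\ \M g(x)\le t\}$. The inequality just proved shows that the precise representative of $u$ is $C(k)t$-Lipschitz on $E_t$, and the weak type $(1,1)$ maximal inequality gives $\H^k(\bbbr^k\setminus E_t)\le C t^{-1}\|g\|_{L^1}$; thus we may choose $t=t(\eps)$ so large that $\H^k(\Omega\setminus E_t)<\eps$. It remains to extend $u|_{E_t}$ to a Lipschitz map of $\bbbr^k$ into $\ell^\infty$. Since $|u_i(x)-u_i(y)|\le\|u(x)-u(y)\|_{\ell^\infty}\le C(k)t\,|x-y|$ for $x,y\in E_t$, the coordinatewise McShane extension
$$
\bar u_i(x)=\inf_{y\in E_t}\big(u_i(y)+C(k)t\,|x-y|\big)
$$
is $C(k)t$-Lipschitz and, for $x$ ranging in any fixed bounded set, satisfies $|\bar u_i(x)|\le\|u(y_0)\|_{\ell^\infty}+C(k)t\cdot\diam$ uniformly in $i$ (with $y_0\in E_t$ fixed); hence $\bar u=(\bar u_i)$ takes values in $\ell^\infty$, is $C(k)t$-Lipschitz with respect to the $\ell^\infty$ norm, and agrees with $u$ on $E_t$. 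Taking the Lipschitz map in question to be $\bar u|_\Omega\in\lip(\Omega,\ell^\infty)$ finishes the argument, since $\{x\in\Omega:\ u(x)\neq\bar u(x)\}\subset\Omega\setminus E_t$ up to a null set.

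The only genuinely delicate step is the first part of the second paragraph: pushing the scalar arguments (absolute continuity on lines, the Riesz-potential estimate, the maximal-function bound) through for $\ell^\infty$-valued maps. This is precisely where Bochner integrability enters, by reducing the problem to a separable target where these facts hold coordinatewise against the single $L^1$ majorant $g$; equivalently one can phrase the potential estimate intrinsically using the $w^*$-derivative machinery recalled in Section~\ref{ih}. All remaining steps are routine.
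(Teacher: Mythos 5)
Your proposal is correct, but it is worth noting that the paper does not prove this proposition at all: it is imported, as ``a stronger version'' of results in \cite[Proposition~5.4]{DHLT} and \cite[Proposition~1.2]{hajlasz3}. What you give is a direct, self-contained proof via the classical truncation argument --- the pointwise inequality $\|u(x)-u(y)\|_{\ell^\infty}\leq C(k)|x-y|\left(\M g(x)+\M g(y)\right)$, Lipschitz behaviour on the sublevel sets $\{\M g\leq t\}$, the weak type $(1,1)$ bound, and a Lipschitz extension --- which is exactly the scalar mechanism the authors themselves recall in Section~\ref{AD}, run coordinatewise in $\ell^\infty$. The three points that make the vector-valued case go through are all present or easily supplied in your sketch: (i) whichever definition of $W^{1,1}(\Omega;\ell^\infty)$ one adopts (Bochner weak derivatives, or the $w^*$-derivative definition the paper actually invokes from \cite{hajlaszt}), the norm of the derivative is an $L^1$ function $g$ dominating every coordinate gradient $|\nabla u_i|$; (ii) the exceptional null sets in the coordinatewise potential and maximal-function estimates depend on $i$, but there are only countably many coordinates, so their union is still null and the estimates pass to the supremum (equivalently, your Bochner--Lebesgue point formulation); (iii) $\ell^\infty$ is $1$-injective, so the coordinatewise McShane extension is again Lipschitz with the same constant as a map into $\ell^\infty$, and your remark about boundedness on the bounded set $\Omega$ disposes of the finiteness of the sup. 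The only step I would ask you to write out with care is the extension across $\partial\Omega$: one needs a single $L^1$ majorant on $\bbbr^k$ controlling \emph{all} coordinate gradients of the extension, which is clear for local reflection plus smooth cutoffs on a smooth bounded domain (and can be checked for Stein's operator), or avoided entirely by your alternative chaining argument with a restricted maximal operator in the John domain $\Omega$. In comparison, the citation route used in the paper buys a stronger statement (Lusin-type approximation for metric-space-valued Sobolev maps, with quantitative control) at no expense, while your argument buys transparency: it proves exactly what is needed with tools already displayed in the paper.
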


\begin{proof}[Proof of Theorem~\ref{bd_lip}] 
Let $\Omega \subeq \bbbr^k$ be a bounded domain with smooth boundary, and let $f \in W^{1,1}(\Omega;\bbbr^{2n+1})$ satisfy the weak contact equation~\eqref{Sob weak contact}. According to Proposition~\ref{new},
$f\in W^{1,1}(\Omega,\bbbh^n)$.
Since $\kappa\circ f\in W^{1,1}(\Omega;\ell^\infty)$, for any $\eps>0$ there is $g\in\lip(\Omega;\ell^\infty)$
such that $\H^k(\{x\in\Omega:\, \kappa\circ f(x)\neq g(x)\})<\eps$. Denote  
$E_\eps=\{x\in\Omega:\, \kappa\circ f(x)=g(x)\}$.
Thus $\H^k(\Omega\setminus E_\eps)<\eps$.
Clearly $g|_{E_\eps}:E_\eps\to \kappa(\bbbh^n)$. Hence $\kappa^{-1}\circ g|_{E_\eps}\in \lip(\Omega;\bbbh^n)$
and it coincides with $f$ on $E_\eps$. 
\end{proof}

\begin{proof}[Proof of Corollary~\ref{small image Sobolev}] Let $k>n$, let $\Omega \subeq \bbbr^k$ be an open set, and let $f \in W^{1,1}_{\loc}(\Omega; \bbbr^{2n+1})$ satisfy the weak contact equation~\eqref{Sob weak contact}. Noting that any open subset of $\bbbr^k$ can be exhausted by countably many balls, the countable subadditivity of Hausdorff measure allows us to assume that $\Omega$ is a ball, and that $f$ and $\wk df$ are integrable on $\Omega$. Theorem~\ref{bd_lip} implies that there is a sequence of subsets $E_1 \supseteq E_2 \supseteq \hdots$ of $\Omega$ such that $f|_{\Omega \backslash E_i}$ is Lipschitz and $Z=\bigcap E_i$ satisfies
$$\H^k_{\bbbr^k}\left(Z\right) = 0.$$
By Theorem \ref{pure k unrect} and the countable subadditivity of Hausdorff measure, 
$$\H^k_{\bbbh^n}\left(f(\Omega\backslash Z)\right)=0.$$
The Lusin condition \eqref{Nkk} now implies that $\H^k_{\bbbh^n}\left(f(\Omega)\right)=0,$ as desired. \end{proof}

\begin{proof}[Proof of Corollary~\ref{super critical}]
As above, we can assume that $\Omega \subseteq \bbbr^k$ is bounded with smooth boundary, and that $f\in W^{1,p}(\Omega;\bbbr^{2n+1})$ satisfies the weak contact equation~\eqref{Sob weak contact}.  According to Proposition~\ref{new},
$f\in W^{1,p}(\Omega,\bbbh^n)$ and hence \cite[Theorem~1.3]{EucPeano} implies that $f$ has the Lusin property \eqref{Nkk}.
Now the result follows from Corollary~\ref{small image Sobolev}.
\end{proof}

\section{Unrectifiability}
\label{main}

Having already proven Theorem~\ref{low rank lipschitz}, the remaining portion of our
proof of Theorem~\ref{pure k unrect} is related to that of Sard's theorem~\cite{sternberg}.
Let $\vi:\bbbr^M\to\bbbr^N$ be sufficiently smooth. 
In the proof of the Sard theorem one shows first that the image of the set of points where the rank of the derivative is zero has zero Hausdorff measure in the appropriate dimension, which depends on the smoothness of $\vi$. 
Then, for each number $r$ less than the maximal rank of the derivative of $\vi$, one obtains a similar estimate for the image of the set where the rank of the derivative equals $r$ by reducing the problem to the case of zero rank. Namely, using a suitable change of variables (related to the implicit function theorem), one can assume that $\vi$ restricted to the first $(M-r)$ coordinates of $\bbbr^M$ has rank zero.
Estimates depend on the smoothness of mapping, but they are also available in the $C^1$ case.

In our situation, the rank of the approximate derivative is at most $n$ almost everywhere. Although the mapping is not of class $C^1$, it coincides with a $C^1$ mapping $g$ on a set that is arbitrarily 
large in measure. We will apply the change of variables to the mapping $g$. This will
reduce the problem to the case of rank zero. Combining it 
with a careful investigation
of the geometry of the Heisenberg group will imply that the $k$-dimensional Hausdorff measure
(with respect to the Carnot-Carath\'eodory metric) of the image equals zero.

Let $k>n$, and let $f:E\to\bbbh^n$, $E\subset\bbbr^k$, be a locally Lipschitz mapping. If $A\subset E$ has measure zero,
then $\H^k_{\bbbh^n}(f(A))=0$. Thus it suffices to prove that there is a full measure subset
of $E$ whose image under $f$ has zero $k$-dimensional measure.
As discussed in Section \ref{heisenberg}, the mapping $f$ is  
approximately differentiable at almost all points of $E$.
so it coincides with $C^1$ mappings on sets large in measure.
Let $g:\bbbr^k\to\bbbr^{2n+1}$ be a mapping
of class $C^1$ which agrees with $f$ on a set $K\subset E$ and $\ap df=dg$ in $K$.
For $j\in \{0,1,2,\ldots,n\}$ let 
$$
K_j=\{x\in K:\, \rank dg_x=j\}.
$$

According to Theorem~\ref{low rank lipschitz}, the rank of the derivative of $f$ is bounded by $n$ almost 
everywhere and hence
$$
\H^k\left(K\setminus \bigcup_{j=0}^n K_j\right)=0.
$$
It suffices to prove that $\H^k_{\bbbh^n}(f(K_j))=0$ for all $j=0,1,2,\ldots,n$, because the set $K$ can be chosen so that $E\backslash K$ has arbitrarily small measure. Moreover, by removing 
a subset of measure zero from $K_j$ we can assume that all points of $K_j$ are density points
and that the image of $d(\pi\circ g)$ is isotropic on $K_j$ (see Remark~\ref{rem}).

To prove that $\H^k(f(K_0))=0$ we do not need to make any change of variables, but if $j\geq 1$ we need to 
make a change of variables to reduce the problem to the case that $j=0$.
\begin{lemma}
\label{change}
Let $x_0\in K_j$, $1\leq j\leq n$. Then there is a neighborhood $U$ of $x_0$, a diffeomorphism
$\Phi:U\to\bbbr^k$, and an affine isometry
$\Psi:\bbbh^n\to\bbbh^n$ such that
\begin{itemize}
\item $\Phi^{-1}(0)=x_0$ and $\Psi(g(x_0))=0$;
\item there is $\eps>0$ such that for 
$p=(p_1,\ldots,p_k)\in B_{\bbbr^k}(0,\eps)$ and $i=1,2,\ldots,j$
$$
\left(\Psi\circ g\circ\Phi^{-1}(p)\right)^{x_i}=p_i.
$$
\end{itemize}
\end{lemma}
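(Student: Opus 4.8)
The plan is to build $\Psi$ and $\Phi$ in two independent moves: $\Psi$ will be a Heisenberg isometry that rotates the image of $d(\pi\circ g)_{x_0}$ into standard position, and $\Phi$ will then be extracted from the inverse function theorem. For the first move, note that since all points of $K_j$ are density points of $K$ and $\ap df=dg$ on $K$, Lemma~\ref{3.2} gives $\im dg_{x_0}\subset H_{g(x_0)}\bbbh^n$; together with $\rank dg_{x_0}=j$ and the isomorphism \eqref{rank}, the subspace $W:=\im d(\pi\circ g)_{x_0}\subset\bbbr^{2n}$ is then exactly $j$-dimensional, and by the reduction recorded just before the lemma (Remark~\ref{rem}) it is isotropic. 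Let $L$ be left translation of $\bbbh^n$ by $g(x_0)\inv$, an isometry with $L(g(x_0))=0$; since $\pi\circ L$ differs from $\pi$ by a constant vector, $d(\pi\circ L)=d\pi$. Applying Lemma~\ref{standard isotropic} to the two $j$-dimensional isotropic subspaces $W$ and $V_0:={\rm span}\,\{\partial/\partial x_1,\dots,\partial/\partial x_j\}$ produces a linear isometry $\Psi_1$ of $\bbbh^n$, of the form $\Psi_1(z,t)=(\phi(z),t)$ with $\phi$ a Euclidean isometry of $\bbbr^{2n}$ carrying $W$ onto $V_0$. Set $\Psi=\Psi_1\circ L$ and $\tilde g=\Psi\circ g$. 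Then $\Psi$ is an affine isometry of $\bbbh^n$ (a left translation followed by the linear isometry of Lemma~\ref{standard isotropic}), $\Psi(g(x_0))=0$, $\tilde g(x_0)=\Psi_1(0)=0$, and, since $\pi\circ\Psi_1=\phi\circ\pi$, a short computation gives $d(\pi\circ\tilde g)_{x_0}=\phi\circ d(\pi\circ g)_{x_0}$, whose image is $\phi(W)=V_0$. In particular the covectors $d\tilde g^{x_1}_{x_0},\dots,d\tilde g^{x_j}_{x_0}\in(\bbbr^k)^*$ are linearly independent, because the composite $\bbbr^k\xrightarrow{d(\pi\circ\tilde g)_{x_0}}V_0\xrightarrow{\ \sim\ }\bbbr^j$, the second arrow being projection onto the $x_1,\dots,x_j$ coordinates, is surjective.

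For the second move, complete $d\tilde g^{x_1}_{x_0},\dots,d\tilde g^{x_j}_{x_0}$ to a basis of $(\bbbr^k)^*$ by adjoining $k-j$ of the standard coordinate covectors, say $e_{\ell_1}^*,\dots,e_{\ell_{k-j}}^*$ with $\ell_1<\dots<\ell_{k-j}$ (possible by the Steinitz exchange lemma), and define
$$
\Phi(x)=\bigl(\tilde g^{x_1}(x),\dots,\tilde g^{x_j}(x),\ x_{\ell_1}-(x_0)_{\ell_1},\dots,x_{\ell_{k-j}}-(x_0)_{\ell_{k-j}}\bigr).
$$
This is a $C^1$ map with $\Phi(x_0)=0$ (using $\tilde g(x_0)=0$) whose differential at $x_0$ has the chosen basis of $(\bbbr^k)^*$ for its rows, hence is an isomorphism. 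The inverse function theorem supplies a neighborhood $U$ of $x_0$ on which $\Phi$ is a diffeomorphism onto an open neighborhood of $0$; choose $\eps>0$ with $B_{\bbbr^k}(0,\eps)\subset\Phi(U)$. Then $\Phi^{-1}(0)=x_0$, and for $p\in B_{\bbbr^k}(0,\eps)$ reading off the first $j$ coordinates of $\Phi(\Phi^{-1}(p))=p$ gives $\tilde g^{x_i}(\Phi^{-1}(p))=p_i$ for $i=1,\dots,j$, i.e.
$$
\bigl(\Psi\circ g\circ\Phi^{-1}(p)\bigr)^{x_i}=p_i,\qquad i=1,\dots,j,
$$
which is exactly the claimed identity.

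The one step that is not routine bookkeeping is the choice of complementary coordinates in the definition of $\Phi$. The naive guess $\Phi=(\tilde g^{x_1},\dots,\tilde g^{x_j},x_{j+1},\dots,x_k)$ need not be a local diffeomorphism, since the independent covectors $d\tilde g^{x_i}_{x_0}$ may well become dependent modulo ${\rm span}\,\{e^*_{j+1},\dots,e^*_k\}$; using the basis-completion argument to decide which $k-j$ coordinates to keep is precisely what makes the differential of $\Phi$ invertible. Once this is in place, everything else is a consequence of Lemma~\ref{standard isotropic} (normalizing the target), the dimension count coming from Lemma~\ref{3.2} and \eqref{rank}, and the inverse function theorem.
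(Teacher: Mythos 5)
Your proof is correct and follows essentially the same route as the paper: normalize by a source translation and a Heisenberg translation, use Lemma~\ref{standard isotropic} to rotate the isotropic image of $d(\pi\circ g)_{x_0}$ onto ${\rm span}\,\{\partial/\partial x_1,\dots,\partial/\partial x_j\}$, and then apply the inverse function theorem to a map whose first $j$ components are $\tilde g^{x_1},\dots,\tilde g^{x_j}$. The only cosmetic difference is that the paper precomposes with a linear isomorphism $\alpha$ of $\bbbr^k$ so that the relevant differential becomes the identity, whereas you instead complete the covectors $d\tilde g^{x_i}_{x_0}$ to a basis by choosing suitable complementary coordinate covectors; both achieve the same end.
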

{\em Proof.}
By pre-composing $g$ with an Euclidean translation and post-composing $g$ with a
Heisenberg translation, we may assume without loss of generality that 
$x_0=0\in\bbbr^k$ and $g(0)=0\in\bbbr^{2n+1}$.
Since the horizontal space at $0\in\bbbh^n$ is $\bbbr^{2n}\times \{0\}$,
the image of $dg_0$ is an isotropic subspace of $\bbbr^{2n}\times \{0\}$.
By Lemma~\ref{standard isotropic} there is a linear isometry $\Psi:\bbbh^n\to\bbbh^n$ such that
$$
{\rm im}\,  d(\Psi\circ g)_0 = {\rm span}\, 
\left\{ \left.\frac{\partial}{\partial x_1}\right|_0,\ldots,\left.\frac{\partial}{\partial x_j}\right|_0\right\}\, .
$$
Now we can find  a linear isomorphism $\alpha:\bbbr^k\to\bbbr^k$ such that
$$
d(\Psi \circ g \circ \alpha^{-1})_0\left(\frac{\partial}{\partial p_i}\right)
= \begin{cases}
			\frac{\partial}{\partial x_i} & \mbox{if  $1\leq i \leq j$},\\
					0 & \mbox{if $j < i \leq k$}.\
	\end{cases}
$$
Define $\beta:\bbbr^k\to\bbbr^k$ by
$$
\beta(p) = 
\left((\Psi \circ g \circ \alpha^{-1}(p))^{x_1},\hdots,(\Psi \circ g \circ \alpha^{-1}(p))^{x_j},p_{j+1},\hdots,p_k\right)\, .
$$
It is easy to see that the matrix of the derivative $d\beta_0$ is the $k$-dimensional identity matrix,
so $\beta$ is a diffeomorphism in a neighborhood of $0\in\bbbr^k$,
$\beta:B_{\bbbr^k}(0,\eps)\to U=\beta(B_{\bbbr^k}(0,\eps))$.
Now $\Phi=\beta\circ\alpha$ satisfies the claim of the lemma.
\hfill $\Box$

In what follows, all cubes will have edges parallel to coordinate axes.
By the countable additivity of the Hausdorff measure, it suffices to show that every point in $K_j$ has a neighborhood whose intersection 
with $K_j$ is mapped onto a set of $\H^k_{\bbbh^n}$-measure zero. Thus, by Lemma~\ref{change},
we may assume without loss of generality that mapping $g$ satisfies $g^{x_i}(p)=p_i$ for $i\leq j$ and that the set $K_j$
has small diameter, say the closure of
$K_j$ is contained in the interior of the cube $[0,1]^k$.
We may also assume that $f$ is $L$-Lipschitz. Since $\rank dg =j$ on $K_j$ and $g$
fixes the first $j$ coordinates, the derivative of $g$ in directions orthogonal to the first
$j$ coordinates equals zero at the points of $K_j$.

Now, the rough idea is as follows. Choose a small cube around a point in $K_j$, say
$[0,d]^k=[0,d]^j\times [0,d]^{k-j}$. 
For a large positive integer $m$, divide $[0,d]^j$ into $m^j$ small cubes $\{Q_\nu\}_{\nu=1}^{m^j}$, each of edge-length $dm^{-1}$. This will split the cube $[0,d]^k$ into thin and tall rectangular boxes
$Q_\nu\times [0,d]^{k-j}$. The mapping $g$ maps such a tall box into $Q_\nu\times\bbbr^{2n+1-j}$, because it fixes
the first $j$ coordinates. However, the mapping $g$ in the directions 
orthogonal to $Q_\nu$ has rank zero on a large
subset. Hence the function $g$ grows slowly in these directions, and so each tall box will be squeezed
so that its image will be contained in a Kor\'anyi ball of radius $CLdm^{-1}$. More precisely, we shall prove:
\begin{lemma}
\label{sq}
There is a constant $C$, depending only on $k$, such that for any integer $m\geq 1$
and every $x\in K_j$ there is a closed cube $Q\ni x$ of an arbitrarily small edge-length
$d$ such that $f(K_j\cap Q)=g(K_j\cap Q)$ can be covered by $m^j$ Kor\'anyi-balls in $\bbbh^n$, each of
radius $CLdm^{-1}$.
\end{lemma}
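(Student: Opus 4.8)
\emph{Overview of the plan.} The proof will make precise the heuristic given above. The one non-obvious ingredient is that, because the Kor\'anyi metric is only $\tfrac12$-H\"older in the vertical ($t$-) direction, the Lipschitz bound on $f$ \emph{into} $\bbbh^n$ translates into a \emph{quadratic} bound on the $t$-coordinate of $g$ along $K_j$; that quadratic gain is exactly what supplies the missing factor where the naive $C^1$ estimates are too weak.

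\emph{Set-up.} By the reductions already in force we may assume $g\in C^1(\bbbr^k;\bbbr^{2n+1})$ with $g^{x_i}(p)=p_i$ for $i\le j$; that every point of $K_j$ is a density point of $K_j$; that $\rank dg\equiv j$ and $\im d(\pi\circ g)$ is isotropic on $K_j$; and that $f=g$ on $K$ with $f$ an $L$-Lipschitz map into $\bbbh^n$, so that $g|_{K_j}$ is $CL$-Lipschitz both into $\bbbh^n$ and into $\bbbr^{2n+1}$. Fix $x_0\in K_j$ and $m\ge 1$. Since left translations of $\bbbh^n$ are Kor\'anyi isometries, normalize $g(x_0)=0$; then the contact equation at $x_0$ together with $\pi g(x_0)=0$ forces $dg^t_{x_0}=0$, while $\rank dg_{x_0}=j$ and $g^{x_i}(p)=p_i$ force $dg_{x_0}(e_\ell)=0$ for $\ell>j$. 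Using the continuity of $dg$, the bound $\|dg_{x_0}\|\le CL$, the vanishing of $\partial g/\partial p_\ell$ ($\ell>j$) on $K_j$, and the density of $K_j$ at $x_0$, we may take the cube $Q\ni x_0$ of edge-length $d$ so small that $\|dg\|\le 2CL$ on $Q$, $\sup_Q\|\partial g/\partial p_\ell\|\le CL/m$ for $\ell>j$, and $\H^k(Q\setminus K_j)\le m^{-2}\H^k(Q)$. Partition $[0,d]^j$ into $m^j$ cubes of edge $d/m$, giving the tall boxes $B_\nu=Q_\nu\times[0,d]^{k-j}$; in each $B_\nu$ meeting $K_j$ choose a base point $q_\nu\in K_j\cap B_\nu$ and replace $g$ by the left translate $g(q_\nu)^{-1}*g$, which still agrees on $K$ with an $L$-Lipschitz map into $\bbbh^n$ vanishing at $q_\nu$ and still has $\partial g^t/\partial p_\ell$ and $\partial(\pi g)/\partial p_\ell$ vanishing on $K_j$ for $\ell>j$. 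It then suffices to prove $\Vert g(x)\Vert_{\rm K}\le CL d m^{-1}$ for all $x\in K_j\cap B_\nu$, with $C=C(k)$; the $m^j$ balls $\{p:\Vert g(q_\nu)^{-1}*p\Vert_{\rm K}<CLdm^{-1}\}$ then cover $f(K_j\cap Q)$.

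\emph{The horizontal part.} Writing $g=(\pi g,g^t)$ and $\Vert(z,\tau)\Vert_{\rm K}=(|z|^4+\tau^2)^{1/4}$, the first $j$ coordinates of $\pi g(x)$ are $x_i-(q_\nu)_i$, each of modulus $\le d/m$; for the remaining coordinates I integrate $d(\pi g)$ along the broken segment from $q_\nu$ matching first the thin and then the tall coordinates of $x$, using $\|d(\pi g)\|\le 2CL$ on the thin leg (length $\le\sqrt j\,d/m$) and $\|\partial(\pi g)/\partial p_\ell\|\le CL/m$ on the tall leg (length $\le\sqrt{k-j}\,d$). This gives $|\pi g(x)|\le CLdm^{-1}$ for all $x\in K_j\cap B_\nu$ — a bound I will use repeatedly below, because it forces all the subsequent error terms to be proportional to $(d/m)^2$.

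\emph{The vertical part — the main obstacle.} It remains to bound $|g^t(x)|$ by $(CLdm^{-1})^2$. Here the $C^1$-control of $g$ is useless: differentiability only yields $|g^t(x)|=o(d)$, whereas the $\tfrac12$-H\"older behaviour of $d_{\rm K}$ in $t$ demands the quadratic size $O((d/m)^2)$. The remedy is that for any $a,b\in K_j$ one has $d_{\rm K}(g(a),g(b))\le CL|a-b|$, and extracting the vertical term,
\[
\bigl|g^t(b)-g^t(a)+2\omega\bigl(\pi g(a),\pi g(b)-\pi g(a)\bigr)\bigr|\le (CL|a-b|)^2 .
\]
Join $q_\nu$ to $x$ inside $B_\nu$ by a path of coordinate-parallel legs, of total length $O(d)$, the thin legs having total length $O(d/m)$. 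On a tall leg $\partial g^t/\partial p_\ell=0$ on $K_j$, so the increment of $g^t$ across it is the sum of its jumps $g^t(b_i)-g^t(a_i)$ over the complementary intervals $(a_i,b_i)$, whose endpoints lie in $K_j\cap B_\nu$; by the displayed inequality and $|\pi g(a_i)|\le CLdm^{-1}$, each jump is $\le (CL\ell_i)^2+2|\pi g(a_i)|\,CL\ell_i\le C L^2(d/m)\,\ell_i$ (here $\ell_i=|b_i-a_i|$), so the whole tall-leg contribution is $\le CL^2(d/m)\cdot\ell$, where $\ell$ is the length of that leg inside $Q\setminus K_j$. On a thin leg $\partial g^t/\partial p_\ell=-2\omega(\pi g,\partial(\pi g)/\partial p_\ell)$ on $K_j$, which by $|\pi g|\le CLdm^{-1}$ there is $\le CL^2 d/m$ in modulus; integrating over the $K_j$-part of the leg (length $\le d/m$) gives $\le CL^2(d/m)^2$, and the jumps over the complementary intervals are again $\le CL^2(d/m)\ell_i$. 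Summing over all legs, $|g^t(x)|\le CL^2(d/m)^2+CL^2(d/m)\cdot\ell_{\mathrm{tot}}$, where $\ell_{\mathrm{tot}}$ is the total length of the path lying in $Q\setminus K_j$. By Fubini and Chebyshev together with $\H^k(Q\setminus K_j)\le m^{-2}\H^k(Q)$, all but a small fraction of coordinate-parallel lines meet $Q\setminus K_j$ in length $\le Cm^{-2}d$; routing the path along such lines forces $\ell_{\mathrm{tot}}\le Cm^{-2}d$, hence $|g^t(x)|\le (CLdm^{-1})^2$, and combined with the horizontal estimate $\Vert g(x)\Vert_{\rm K}\le CLdm^{-1}$, as required. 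The delicate point — which I expect to be the real work, the estimates above being routine once it is in place — is precisely this routing: since $q_\nu$ and $x$ are \emph{prescribed}, one cannot freely pick the corners, and one must argue (using the density of $K_j$ at its points and a measure-theoretic choice of auxiliary hyperplanes) that the path can nonetheless be built from legs lying on ``good'' lines while still terminating exactly at $x$.
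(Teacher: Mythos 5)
Your plan reproduces the paper's reductions and the quadratic use of the Kor\'anyi metric correctly, but it stalls exactly at the point you yourself flag as ``the real work'': connecting the two \emph{prescribed} points $q_\nu$ and $x$ by coordinate-parallel legs lying on ``good'' lines (those meeting $Q\setminus K_j$ in length $\lesssim m^{-2}d$). This is a genuine gap, not a routine technicality. The Fubini/Chebyshev selection only tells you that most lines are good; the first and last legs of your path are forced to pass through $q_\nu$ and $x$, and at scale $d/m$ near $x$ the density of $K_j$ is not controlled by the single cube-level estimate $\H^k(Q\setminus K_j)\le m^{-2}\H^k(Q)$, so you cannot simply perturb the terminal leg onto a good line and jump back. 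A secondary issue is your decomposition of a tall leg into ``jumps over complementary intervals whose endpoints lie in $K_j$'': $K_j$ is only measurable, so $\ell\cap K_j$ need not be closed; this is fixable (pass to a compact subset of nearly full measure, or take limits using continuity of $g$), but as written the step is not justified. Finally, your constant is claimed to depend only on $k$, yet several of your bounds (e.g.\ $\Vert dg\Vert\le 2CL$ on $Q$) use the Euclidean Lipschitz constant of $f$ into $\bbbr^{2n+1}$, which depends on $\sup_E\Vert f\Vert_{\rm K}$; this can be repaired after the left translation, but it needs saying.

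The paper's proof is organized precisely so that the routing problem never arises. First, within each thin box $Q_\nu\times[0,d]^{k-j}$ it uses Fubini to pick a single $(k-j)$-dimensional slice $\{\rho\}\times[0,d]^{k-j}$ whose intersection with $K_j$ has nearly full measure. An arbitrary prescribed point $p\in(Q_\nu\times[0,d]^{k-j})\cap K_j$ is then handled by a \emph{single jump}: it lies within Euclidean distance $Cdm^{-1}$ of some point of the slice intersected with $K_j$, and since both points are in $K_j$ the $L$-Lipschitz property of $f$ into $\bbbh^n$ bounds the Kor\'anyi distance of their images at once --- no path through $p$ is ever constructed. Second, to compare two arbitrary points $a,b$ of the slice intersected with $K_j$, the coordinate-parallel segments joining them are shifted onto good lines, and every corner mismatch (including the endpoints) is again absorbed as a jump between two points of $K_j$ via the metric Lipschitz bound. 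Third, along each good line the paper does not split horizontal and vertical parts at all: it extends $f|_{K_j\cap\ell}$ by Lemma~\ref{line extension Lipschitz} to an $L$-Lipschitz, hence horizontal, curve whose speed vanishes a.e.\ on $K_j\cap\ell$ (there $f=g$ and the directional derivative of $g$ is zero), so the Carnot--Carath\'eodory diameter of the image is at most $L\H^1(\ell\setminus K_j)$ (Lemma~\ref{short}); this also makes the constant depend only on $k$ and handles non-closed $K_j\cap\ell$ automatically. If you want to salvage your computation, the cleanest repair is to import exactly this two-step structure (slice first, then single jumps between $K_j$-points for all prescribed or mismatched endpoints); your estimates for the on-$K_j$ integrals and the off-$K_j$ jumps are then consistent, but as the proposal stands the key covering statement is not proved.
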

The theorem easily follows from the lemma. Indeed, given $m\geq 1$, the family of
cubes described in Lemma \ref{sq} forms a Vitali covering of $K_j$
and hence by the Vitali covering theorem \cite[Theorem~II.17.1]{di}
we can select cubes $\{Q_i\}_{i=1}^\infty$ with edges of length $d_i$ 
and pairwise disjoint interiors such that
\begin{equation}
\label{1234}
\H^k\left(K_j\setminus \bigcup_{i=1}^\infty Q_i\right)=0.
\end{equation}
We may also assume that $\sum_{i=1}^\infty d_i^k\leq 1$, because
we may choose cubes $Q_i$ to be inside the unit cube that contains $K_j$.

Recall the definition of the Hausdorff content. In any metric space it is defined by 
$$
\H^k_\infty(A)=\inf\left\{ \sum_{i=1}^\infty r_i^k\right\}
$$
where the infimum is taken over all coverings 
$A\subset\bigcup_{i=1}^\infty B(x_i,r_i)$. It is easy to see that
$\H^k_\infty(A)=0$ if and only if $\H^k(A)=0$. 

According to Lemma \ref{sq},
$$
f(K_j\cap Q_i)\subset \bigcup_{\nu=1}^{m^j}B_{\bbbh^n}(x_{i\nu},CLd_im^{-1}).
$$
Hence
$$
\H^k_{\infty,\bbbh^n}\left(f\left(K_j\cap\bigcup_{i=1}^\infty Q_i\right)\right)\leq
\sum_{i=1}^\infty m^j(CLd_i m^{-1})^k \leq
C^k L^k m^{j-k}.
$$
Since $j-k<0$ and $m$ can be arbitrarily large, the estimate $C^k L^k m^{j-k}$
can be arbitrarily close to zero. This proves that for any $\eps>0$ there is a subset of
$K_j$ of full measure whose image has Hausdorff content less than $\eps$. This implies that
$\H^k_{\bbbh^n}(f(K_j))=0$, as desired.

Thus we are left with the proof of Lemma~\ref{sq}. 

\subsection{Proof of Lemma~\ref{sq}.} 
Fix a positive integer $m$. Let $x\in K_j$.
Since every point in $K_j$ is a density point, there is 
a closed cube $Q\ni x$ such that
\begin{equation}
\label{less_eps}
\H^k(Q\setminus K_j) < m^{-k}\H^k(Q).
\end{equation}
The cube $Q$ may be chosen so that it edge length $d>0$ is arbitrarily small. By translating the 
coordinate system in $\bbbr^k$ we may assume that 
$$
Q=[0,d]^k=[0,d]^j\times [0,d]^{k-j}.
$$
Divide $[0,d]^j$ into $m^j$ essentially disjoint cubes, each of edge-length  $dm^{-1}$. Denote the 
resulting cubes by $\{ Q_\nu\}_{\nu=1}^{m^j}$. Then
$$
Q=\bigcup_{\nu=1}^{m^j} Q_\nu\times [0,d]^{k-j}.
$$
Since the mapping $g$ fixes the first $j$ coordinates,
and $f=g$ on $K_j$,
$$
f((Q_\nu\times [0,d]^{k-j})\cap K_j)\subset g(Q_\nu\times [0,d]^{k-j}) \subset Q_\nu\times\bbbr^{2n+1-j}.
$$
It remains to prove that the above image of the function $f$ is contained in a Kor\'anyi-ball 
of radius $CLdm^{-1}$. Where the value of the quantity $C$, 
which depends on $k$ only, can be deduced from the estimates below. Fix $\nu$.
It follows from \eqref{less_eps} that
$$
\H^k ((Q_\nu\times [0,d]^{k-j})\cap K_j) > (m^{-j}-m^{-k})d^k.
$$
Fubini's theorem now yields that there is $\rho\in Q_\nu$ such that
\begin{equation}
\label{star}
\H^{k-j}((\{\rho\}\times [0,d]^{k-j})\cap K_j)>(1-m^{j-k}) d^{k-j}.
\end{equation}
We will show that the diameter of the set
$f((\{\rho\}\times [0,d]^{k-j})\cap K_j)$ 
in the Kor\'anyi metric
is bounded by $CLdm^{-1}$.
This easily implies the lemma. Indeed, it follows from \eqref{star}
that the distance of any point in the slice 
$\{\rho\}\times [0,d]^{k-j}$ to the set
$(\{\rho\}\times [0,d]^{k-j})\cap K_j$ is bounded by $Cdm^{-1}$.
Since the distance of any $p\in (Q_\nu\times [0,d]^{k-j})\cap K_j$
to the slice $\{\rho\}\times [0,d]^{k-j}$ is also bounded by $Cdm^{-1}$
we conclude that
$$
\dist_{\bbbr^k}\left( p,(\{\rho\}\times [0,d]^{k-j})\cap K_j\right) \leq Cdm^{-1}.
$$
Now the fact that $f$ is $L$-Lipschitz on $K_j$ shows that the Kor\'anyi diameter
of the set
$f((Q_\nu\times [0,d]^{k-j})\cap K_j)$ is bounded by $CLdm^{-1}$ plus
the diameter of the set
$f((\{\rho\}\times [0,d]^{k-j})\cap K_j)$, which is also 
bounded by $CLdm^{-1}$. Hence the set
$f((Q_\nu\times [0,d]^{k-j})\cap K_j)$ is contained in a ball of radius
$CLdm^{-1}$, proving the lemma.

Thus we are left with the proof that
\begin{equation}
\label{last piece}
\diam_{\bbbh^n}\left( f((\{\rho\}\times [0,d]^{k-j})\cap K_j)\right) \leq CLdm^{-1}.
\end{equation}
For $j+1\leq i\leq k$ let
$$
F^i=\{ (\rho,p_{j+1},\ldots,p_k)\in \{\rho\}\times [0,d]^{k-j}:\, p_i=0\}
$$
be one of the faces of the cube $\{\rho\}\times [0,d]^{k-j}$. 
For 
$$
p=(\rho,p_{j+1},\ldots,p_{i-1},0,p_{i+1},\ldots,p_k)\in F^i
$$
let $\ell^i_p$ be the segment of length $d$ in $\{\rho\}\times [0,d]^{k-j}$,
perpendicular to the face $F^i$ and passing through $p$. In other
words the segment $\ell^i_{p}$ is the image of the parameterization
$$
t\mapsto p+e_it =(\rho,p_{j+1},\ldots,p_{i-1},t,p_{i+1},\ldots,p_k),
\quad
t\in [0,d].
$$
Let
\begin{equation}
\label{oko}
\tilde{F}^i = \{p\in F^i:\, \H^1(\ell^i_{p}\cap K_j)>(1-m^{-1})d\}.
\end{equation}
It easily follows from \eqref{star} and the Fubini theorem that
\begin{equation}
\label{star2}
\H^{k-j-1}(\tilde{F}^i)>(1-m^{j-k+1})d^{k-j-1}.
\end{equation}
If $j+1=k$, this simply means that $F^i=\tilde{F}^i$ and $i=k$.
\begin{lemma}
\label{short}
For any $p \in F_i$,
$$
\diam_{\bbbh^n}(f(K_j\cap\ell^i_{p}))\leq L \H^1(\ell^i_{p}\setminus K_j).
$$
In particular, if $\tilde{p}\in \tilde{F}^i$, then
$$
\diam_{\bbbh^n}(f(K_j\cap\ell^i_{\tilde{p}}))\leq Ldm^{-1}.
$$
\end{lemma}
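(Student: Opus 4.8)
The plan is to linearize the problem along the segment $\ell^i_p$ and to exploit the fact, already recorded just above in the text, that because $\rank dg=j$ on $K_j$ and $g$ fixes its first $j$ coordinates, the derivative of $g$ in every direction $e_i$ with $i>j$ vanishes identically on $K_j$. Concretely, fix $p\in F^i$ and identify $\ell^i_p$ with $[0,d]$ via $t\mapsto p+te_i$. Set $h(t)=g(p+te_i)$, a $C^1$ curve into $\bbbr^{2n+1}$, and $A=\{t\in[0,d]:\, p+te_i\in K_j\}$; the observation above gives $h'(t)=0$ for every $t\in A$. Since $f=g$ on $K_j$ and $f$ is $L$-Lipschitz into $\bbbh^n$, the map $f|_{K_j\cap\ell^i_p}$ is $L$-Lipschitz, and as $\bbbh^n$ is a complete geodesic space, Lemma~\ref{line extension Lipschitz} furnishes an $L$-Lipschitz extension $F\colon[0,d]\to\bbbh^n$ of it; note that $F(t)=h(t)$ for every $t\in A$.

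Being Lipschitz into $\bbbh^n$, the curve $F$ is horizontal: for almost every $t$ one has $F'(t)\in H_{F(t)}\bbbh^n$ and $\Vert F'(t)\Vert_{\bbbh}\le L$, and for every subinterval $[\alpha,\beta]\subset[0,d]$ we have $\ell_{\bbbh}(F|_{[\alpha,\beta]})=\int_\alpha^\beta\Vert F'(t)\Vert_{\bbbh}\,dt$, where $\Vert\cdot\Vert_{\bbbh}$ denotes the Euclidean norm of the projection to $\bbbr^{2n}$ (see Section~\ref{heisenberg}). The key step is to show that $F'=0$ almost everywhere on $A$. Indeed, at almost every $t_0\in A$ the Lipschitz curve $F\colon[0,d]\to\bbbr^{2n+1}$ is differentiable and $t_0$ is a density point of $A$; since $F$ coincides with the $C^1$ curve $h$ on $A$ and $h'(t_0)=0$, the difference quotients $(F(t)-F(t_0))/(t-t_0)$ taken along $t\in A$ tend to $0$, which forces $F'(t_0)=0$ and hence $\Vert F'(t_0)\Vert_{\bbbh}=0$. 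Therefore $\int_A\Vert F'\Vert_{\bbbh}=0$.

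To finish, let $a=p+\alpha e_i$ and $b=p+\beta e_i$ be points of $K_j\cap\ell^i_p$ with $\alpha<\beta$. The horizontal curve $F|_{[\alpha,\beta]}$ joins $f(a)=F(\alpha)$ to $f(b)=F(\beta)$, so using $\int_A\Vert F'\Vert_{\bbbh}=0$ and $\Vert F'\Vert_{\bbbh}\le L$ a.e.,
$$
d_{cc}(f(a),f(b))\le\ell_{\bbbh}(F|_{[\alpha,\beta]})=\int_{[\alpha,\beta]\setminus A}\Vert F'(t)\Vert_{\bbbh}\,dt\le L\,\H^1([\alpha,\beta]\setminus A)\le L\,\H^1(\ell^i_p\setminus K_j).
$$
Taking the supremum over all such $a,b$ yields $\diam_{\bbbh^n}(f(K_j\cap\ell^i_p))\le L\,\H^1(\ell^i_p\setminus K_j)$, the first assertion. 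For the second, if $\tilde{p}\in\tilde{F}^i$ then \eqref{oko} gives $\H^1(\ell^i_{\tilde{p}}\cap K_j)>(1-m^{-1})d$, hence $\H^1(\ell^i_{\tilde{p}}\setminus K_j)<dm^{-1}$, and the bound $Ldm^{-1}$ follows.

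I expect the vanishing of $F'$ on $A$ to be the only genuine obstacle: it requires reconciling the merely almost-everywhere differentiability of the Lipschitz curve $F$ (as a map into $\bbbr^{2n+1}$) with the everywhere vanishing of $h'$ on $A$, which is handled by restricting attention to density points of $A$, and then translating the vanishing of the Euclidean derivative into the vanishing of the sub-Riemannian speed $\Vert F'\Vert_{\bbbh}$. Everything else is bookkeeping with the length formula for horizontal curves and the definitions of $F^i$ and $\tilde{F}^i$.
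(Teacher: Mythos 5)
Your proof is correct and follows essentially the same route as the paper: extend $f|_{K_j\cap\ell^i_p}$ via Lemma~\ref{line extension Lipschitz} to an $L$-Lipschitz horizontal curve, show its sub-Riemannian speed vanishes a.e.\ on the parameters corresponding to $K_j$ (because there the curve agrees with $g$, whose derivative in the direction $e_i$, $i>j$, is zero), and bound the diameter by the integral of the speed over the complementary set, where it is at most $L$. Your density-point argument just makes explicit the step the paper states as ``the derivatives of $f$ and $g$ in the direction of the segment coincide at almost all points of $K_j\cap\ell^i_p$.''
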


{\em Proof.} According to Lemma~\ref{line extension Lipschitz}, $f$ restricted to
$K_j\cap\ell^i_{p}$ can be extended to $\ell^i_{p}$
as an $L$-Lipschitz curve into $\bbbh^n$.  
The extension of $f$ to $\ell^i_{p}$ is a horizontal curve 
and hence its length in the metric 
of $\bbbh^n$ equals the integral of speed computed with respect to metric $\boldg$
in the horizontal space. The speed is always bounded by the Lipschitz constant $L$.
However, on the set $K_j\cap\ell^i_{p}$, $f$ coincides with $g$. Hence the derivatives
of $f$ and $g$ in the direction of the segment coincide at almost all points
of  $K_j\cap\ell^i_{p}$. However, the derivative of $g$ in this direction equals zero and so the speed of $f$ is zero at almost every point of $K_j \cap \ell^i_p$. Hence the diameter we wish to estimate is bounded by the integral of the speed over the set $\ell^i_{p}\setminus K_j$, where the speed is bounded by $L$. \hfill $\Box$

If $p\in F^i$, it follows from \eqref{star2} that there is $\tilde{p}\in \tilde{F}^i$
such that $|p-\tilde{p}|<Cdm^{-1}$. This inequality and \eqref{oko} imply that for any point 
$q\in\ell^i_{p}$ there is a point $\tilde{q}\in\ell^i_{\tilde{p}}\cap K_j$
such that $$|q-\tilde{q}|<(C+1)dm^{-1}=Cdm^{-1}.$$

We are now prepared to prove \eqref{last piece} and complete the proof of Theorem~\ref{pure k unrect}. Let $a,b\in (\{\rho\}\times [0,d]^{k-j})\cap K_j$. We can connect the points $a$ and $b$
by $k-j$ segments $I^i$ parallel to coordinate directions $e_i$, $i=j+1,\ldots,k$,
so that the total length of these segments is bounded by $C|a-b|$. 
Some of the segments might degenerate to a point if the corresponding coordinates of
$a$ and $b$ are equal. Denote the endpoints
of $I^i$ by $\alpha_i,\beta_i$. We have $\alpha_{j+1}=a$, $\beta_i=\alpha_{i+1}$, $\beta_{k}=b$ and
$$
\sum_{i=j+1}^{k} |\alpha_i-\beta_i| <C|a-b|.
$$
Segments $I^i$ do not necessarily have large intersections with $K_j$, so in order to apply 
Lemma~\ref{short} we need to shift these segments slightly. This is what we will do now.
Each segment $I^i$ is contained in a segment $\ell^i_{p_i}$, $p_i\in F^i$,
so $\alpha_i,\beta_i\in \ell^i_{p_i}$.
Let $\tilde{p}_i\in \tilde{F}^i$ be such that $|p_i-\tilde{p}_i|<Cdm^{-1}$.
Thus there are points $\tilde{\alpha}_i,\tilde{\beta}_i\in \ell^i_{\tilde{p}_i}\cap K_j$ such that
$|\alpha_i-\tilde{\alpha}_i|<Cdm^{-1}$ and $|\beta_i-\tilde{\beta}_i|<Cdm^{-1}$.
Note that it is not necessarily true that $\tilde{\beta}_i=\tilde{\alpha}_{i+1}$, but nevertheless 
we have the estimate $|\tilde{\beta}_i-\tilde{\alpha}_{i+1}|<Cdm^{-1}$.
It follows from Lemma~\ref{short} that 
$$
d_{\rm{K}}(f(\tilde{\alpha}_i),f(\tilde{\beta}_i))< Ldm^{-1}.
$$
Hence
\begin{eqnarray*}
\lefteqn{d_{\rm{K}}(f(a),f(b))
 = 
d_{\rm{K}}(f(\alpha_{j+1}),f(\beta_{k}))
\leq 
d_{\rm{K}}(f(\alpha_{j+1}),f(\tilde{\alpha}_{j+1}))} \\
& + &
\sum_{i=j+1}^{k-1} 
     \left( d_{\rm{K}}(f(\tilde{\alpha}_i),f(\tilde{\beta}_i)) + 
           d_{\rm{K}}(f(\tilde{\beta}_i),f(\tilde{\alpha}_{i+1}))\right) \\
& + &
d_{\rm{K}}(f(\tilde{\alpha}_k),f(\tilde{\beta}_k)) +
d_{\rm{K}}(f(\tilde{\beta}_k),f(\beta_k))   \\
& \leq &        
CLdm^{-1}.
\end{eqnarray*}
This proves the desired estimate \eqref{last piece}.
The proof is complete.
\hfill $\Box$

\section{Non-existence of H\"older-Lipschitz embeddings}
\label{gromov conjecture}

In this section we will prove Theorem~\ref{Lip Gromov}. To do this we will need two
auxiliary results.
\begin{proposition}
\label{contact}
Let $k$ and $n$ be arbitrary positive integers and let $\Omega\subset\bbbr^k$ be open. Suppose that
$f:\Omega\to\bbbh^n$ is of class $C^{0,\frac{1}{2}+}(\Omega;\bbbh^n)$. If the components
$f^{x_i}, f^{y_i}$ are differentiable at $x_0\in\Omega$ for $i=1,2,\ldots,n$, then
the last component $f^t$ is also differentiable at $x_0$ and
$$
df^t_{x_0} = 2\sum_{i=1}^n
\left( f^{y_i}(x_0)df_{x_0}^{x_i}-f^{x_i}(x_0)df_{x_0}^{y_i}\right)\, ,
$$
i.e. the image of the derivative $df_{x_0}$ lies in the horizontal space $H_{f(x_0)}\bbbh^n$.
\end{proposition}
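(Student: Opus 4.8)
The plan is to read the statement directly off the Kor\'anyi distance formula \eqref{kor}, exploiting that membership in $C^{0,\frac{1}{2}+}(\Omega;\bbbh^n)$ forces the ``vertical defect'' of $f$ to be of order $o(|x-x_0|)$, one power better than what plain $1/2$-H\"older continuity would give. First I would apply \eqref{kor} with $p=f(x)$ and $q=f(x_0)$: the second term under the fourth root is bounded above by $d_{\rm{K}}(f(x),f(x_0))^2$, so
$$
\left| f^t(x)-f^t(x_0)+2\sum_{i=1}^n\big(f^{x_i}(x_0)f^{y_i}(x)-f^{x_i}(x)f^{y_i}(x_0)\big)\right|\le d_{\rm{K}}(f(x),f(x_0))^2 .
$$
By hypothesis the right-hand side is at most $|x-x_0|\,\beta(|x-x_0|)^2$, and since $\beta$ is continuous with $\beta(0)=0$, this is $o(|x-x_0|)$ as $x\to x_0$. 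Hence
$$
f^t(x)-f^t(x_0)=-2\sum_{i=1}^n\big(f^{x_i}(x_0)f^{y_i}(x)-f^{x_i}(x)f^{y_i}(x_0)\big)+o(|x-x_0|).
$$

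Next I would insert the first-order expansions of the horizontal components at $x_0$, namely $f^{x_i}(x)=f^{x_i}(x_0)+df^{x_i}_{x_0}(x-x_0)+o(|x-x_0|)$ and likewise for $f^{y_i}$, into the sum above. The constant-times-constant terms cancel, each product of two first-order increments is $o(|x-x_0|)$, and a constant (the value $f^{x_i}(x_0)$ or $f^{y_i}(x_0)$) times an $o(|x-x_0|)$ term is still $o(|x-x_0|)$; what remains is precisely the linear functional $2\sum_{i=1}^n\big(f^{y_i}(x_0)df^{x_i}_{x_0}-f^{x_i}(x_0)df^{y_i}_{x_0}\big)$ applied to $x-x_0$. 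This shows simultaneously that $f^t$ is differentiable at $x_0$ and that
$$
df^t_{x_0}=2\sum_{i=1}^n\big(f^{y_i}(x_0)df^{x_i}_{x_0}-f^{x_i}(x_0)df^{y_i}_{x_0}\big),
$$
which is the asserted identity.

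Finally, the horizontal containment is a one-line consequence: evaluating the contact form \eqref{the-alpha-form} at $f(x_0)$ on the vector $df_{x_0}(v)$ gives $df^t_{x_0}(v)+2\sum_{i=1}^n\big(f^{x_i}(x_0)df^{y_i}_{x_0}(v)-f^{y_i}(x_0)df^{x_i}_{x_0}(v)\big)$, which vanishes by the formula just proved; thus $\im df_{x_0}\subset\ker\alpha(f(x_0))=H_{f(x_0)}\bbbh^n$.

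I do not expect a genuine obstacle here — the argument is essentially a single Taylor computation built on \eqref{kor}. The one point that requires care, and where the strict improvement encoded in $C^{0,\frac{1}{2}+}$ is actually used, is the passage from $d_{\rm{K}}(f(x),f(x_0))^2\le |x-x_0|\,\beta(|x-x_0|)^2$ to a true $o(|x-x_0|)$ error term: with plain $1/2$-H\"older regularity one would only obtain an $O(|x-x_0|)$ bound, which is insufficient to conclude the differentiability of $f^t$.
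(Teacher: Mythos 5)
Your proposal is correct and follows essentially the same route as the paper: bound the vertical defect $\bigl|f^t(x)-f^t(x_0)+2\sum_i\bigl(f^{x_i}(x_0)f^{y_i}(x)-f^{x_i}(x)f^{y_i}(x_0)\bigr)\bigr|$ by $d_{\rm K}(f(x),f(x_0))^2\le |x-x_0|\beta^2(|x-x_0|)=o(|x-x_0|)$ via \eqref{kor}, then use the differentiability of the horizontal components to extract the linear term, which is exactly the paper's add-and-subtract/triangle-inequality computation phrased as a Taylor expansion. The only cosmetic slip is the remark about ``products of two first-order increments,'' which do not actually occur in the mixed expression being expanded, but this does not affect the argument.
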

{\em Proof.}
From the assumptions about $f$, there is a continuous, non-decreasing function 
$\beta:[0,\infty)\to [0,\infty)$ satisfying $\beta(0)=0$ such that for all $x \in \Omega$
$$
d_{\rm{K}}(f(x),f(x_0))\leq |x-x_0|^{1/2}\beta(|x-x_0|).
$$
Hence \eqref{kor} yields
\begin{eqnarray*}
\lefteqn{\left|f^t(x)-f^t(x_0) + 2\sum_{i=1}^n
\left( f^{x_i}(x_0)f^{y_i}(x) - f^{x_i}(x)f^{y_i}(x_0)\right)\right|^{1/2}} \phantom{aaaaaaaaaaaaa}\\ 
& \leq &
|x-x_0|^{1/2}\beta(|x-x_0|).
\end{eqnarray*}
After adding and subtracting $f^{x_i}(x_0)f^{y_i}(x_0)$ in the sum given above, the triangle inequality implies 
\begin{eqnarray*}
\lefteqn{\left| f^t(x)-f^t(x_0) - 
2\sum_{i=1}^n \left(f^{y_i}(x_0)df_{x_0}^{x_i} - f^{x_i}(x_0)df_{x_0}^{y_i}\right)(x-x_0)\right|} \\
& \leq &
|x-x_0|\beta^2(|x-x_0|) \\ 
& + &
2\sum_{i=1}^n |f^{x_i}(x_0)|\, |f^{y_i}(x)-f^{y_i}(x_0)-df_{x_0}^{y_i}(x-x_0)|\\
& + &
2\sum_{i=1}^n |f^{y_i}(x_0)|\, |f^{x_i}(x)-f^{x_i}(x_0)-df_{x_0}^{x_i}(x-x_0)|\\
& = &
o(|x-x_0|),
\end{eqnarray*}
because the functions $f^{x_i}$ and $f^{y_i}$ are differentiable at $x_0$. This implies the conclusion of the proposition.
\hfill $\Box$

\begin{proposition}
\label{get Heis Lip}
Let $k$ and $n$ be arbitrary positive integers.
Suppose that
$f:[0,1]^k\to \bbbr^{2n+1}$ is Lipschitz and at $\H^k$-almost every point $x_0$, the image of the derivative $df_{x_0}$ lies in the horizontal space $H_{f(x_0)}\bbbh^n$.
Then $f:[0,1]^k\to\bbbh^n$ is Lipschitz.
\end{proposition}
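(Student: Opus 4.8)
The plan is to show that $f$ is Lipschitz as a map into $(\bbbh^n, d_{cc})$ by estimating the Carnot--Carath\'eodory distance between images of nearby points through a path argument. Given $x, y \in [0,1]^k$, connect them by the straight Euclidean segment $\sigma \colon [0,1] \to [0,1]^k$, $\sigma(s) = x + s(y-x)$, which has Euclidean length $|x-y|$. The composite $\gamma = f \circ \sigma$ is a Lipschitz curve in $\bbbr^{2n+1}$, hence rectifiable and differentiable almost everywhere, with Euclidean speed bounded by $L|x-y|$ where $L = \Lip(f)$. The key point is to upgrade this to a bound on its \emph{Heisenberg} length: I claim $\gamma$ is a horizontal curve, so that $d_{cc}(f(x),f(y)) \le \ell_{\bbbh}(\gamma) \le \ell_{\bbbr^{2n+1}}(\gamma) \le L|x-y|$, using that $\ell_{\bbbh}(\gamma) \le \ell_{\bbbr^{2n+1}}(\gamma)$ from Section~\ref{heisenberg}.

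The horizontality of $\gamma$ is the crux, and here is the main obstacle: although $\im df_{x_0} \subset H_{f(x_0)}\bbbh^n$ for a.e.\ $x_0 \in [0,1]^k$, the segment $\sigma$ is a null set in $\bbbr^k$, so we cannot directly assert $\gamma'(s) \in H_{\gamma(s)}\bbbh^n$ for a.e.\ $s$ by restricting the a.e.\ statement to $\sigma$. To circumvent this I would use a Fubini / averaging argument over a family of parallel segments. Fix the direction $v = y - x$ and consider the family of segments $\sigma_w(s) = w + sv$ for $w$ ranging over a small cube transverse to $v$; by Fubini, for a.e.\ such $w$ the segment $\sigma_w$ meets the full-measure set where the contact condition holds in a set of full one-dimensional measure, so $f\circ\sigma_w$ is horizontal. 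Since $f$ is continuous (indeed Lipschitz into $\bbbr^{2n+1}$) and horizontality is a closed condition under uniform convergence of uniformly Lipschitz curves (the length functional $\ell_{\bbbh}$ is lower semicontinuous and, for curves that converge in $C^0$ with a common Lipschitz bound, the Heisenberg length of the limit is controlled; alternatively the contact equation $\gamma^*\alpha = 0$ passes to limits because $f^t(\sigma_w(s)) - f^t(\sigma_w(0))$ is given by an integral of the other components which converges), letting $w \to x$ shows $f\circ\sigma = \gamma$ is horizontal as well.

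An alternative, perhaps cleaner route avoiding the limiting argument: work directly with the explicit formula for the $t$-component. Write $\gamma = (\gamma^{x_1}, \gamma^{y_1}, \ldots, \gamma^{x_n}, \gamma^{y_n}, \gamma^t)$. Since $f$ is Lipschitz into $\bbbr^{2n+1}$, each $\gamma^{x_i}, \gamma^{y_i}, \gamma^t$ is Lipschitz on $[0,1]$. For $\gamma$ to be horizontal it suffices that
$$
(\gamma^t)'(s) = 2\sum_{i=1}^n \bigl( \gamma^{y_i}(s)(\gamma^{x_i})'(s) - \gamma^{x_i}(s)(\gamma^{y_i})'(s)\bigr)
$$
for a.e.\ $s$. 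Again by a Fubini argument applied to the family $\sigma_w$ and using that $f$ is differentiable a.e.\ with $\im df \subset \ker\alpha$ a.e., this identity holds for $f\circ\sigma_w$ for a.e.\ $w$; and since both sides are integrated against $ds$ and depend continuously on $w$ (the left side after passing to the integrated form $\gamma^t(s) - \gamma^t(0) = \int_0^s \cdots$), the identity holds for $w = x$. This is essentially the integrated contact equation, and its stability under the transverse limit is the technical heart. Once horizontality is established, the length comparison gives $d_{cc}(f(x),f(y)) \le L|x-y|$ for all $x,y \in [0,1]^k$, which is the assertion, and the passage from the cube to an arbitrary open $\Omega$ in Theorem~\ref{Lip Gromov} is handled by local compactness as elsewhere in the paper.
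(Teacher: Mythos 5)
Your proposal is correct in outline and follows essentially the same strategy as the paper: use Fubini to see that $f$ restricted to almost every segment parallel to a fixed direction is a horizontal curve whose Heisenberg length is controlled by its Euclidean length, and then reach the remaining segments by a limiting argument over nearby parallel segments, using the continuity of $f$. The differences lie in the execution of the limit. The paper works only with coordinate-parallel segments (combining the $k$ directions at the end) and handles the limit in the simplest possible way: on almost every parallel segment the restriction of $f$ is $CL$-Lipschitz \emph{as a map into the metric space} $(\bbbh^n,d_{cc})$, and a pointwise limit of $CL$-Lipschitz maps into a metric space is again $CL$-Lipschitz, so horizontality of the limiting curve is never needed. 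You instead insist on proving that $f\circ\sigma$ itself is horizontal. Your first route (uniform limits of horizontal curves with a common Lipschitz bound) is fine, but in the second route the assertion that the right-hand side of the integrated contact equation ``depends continuously on $w$'' is not literally true: the derivatives $(f\circ\sigma_w)'$ need not converge pointwise as $w\to x$, and one must argue via weak-$*$ compactness in $L^\infty$ together with uniform convergence of the curves -- or, more simply, note that the endpoint estimate $d_{cc}(f(w),f(w+v))\le L|v|$ passes to the limit $w\to x$ because $d_{cc}$ is topologically equivalent to the Euclidean metric, which already gives the desired bound with no horizontality of the limit curve at all (this is in effect what the paper does). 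Two minor points: you need the translated segments $\sigma_w$ to stay inside $[0,1]^k$, which can degenerate when $x$ and $y$ lie on opposite faces and is fixed by a continuity/approximation remark (the paper's coordinate segments avoid this automatically); in return, your chord argument gives the constant $L$ rather than $CL$, which is immaterial for the conclusion.
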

{\em Proof.}
It follows from the Fubini theorem that almost all segments parallel
to coordinate axes are mapped by $f$ onto horizontal curves. 
The Euclidean speed on these curves if bounded by the Lipschitz
constant $L$ of $f$. The image
$f([0,1]^k)$ is a bounded subset of $\bbbh^n$. On bounded subsets of $\bbbh^n$
the Euclidean length of horizontal vectors is uniformly comparable 
to the length computed with respect to the sub-Riemannian metric $\boldg$ in the horizontal
distribution. Thus the images of almost all segments parallel to coordinate axes
are $CL$-Lipschitz as curves in $\bbbh^n$. 
Any segment parallel to a coordinate axis is a limit of parallel segments
on which $f$ is $CL$-Lipschitz as a mapping into $\bbbh^n$. Hence the 
mapping $f$ on that segment is also $CL$-Lipschitz as a uniform limit of
$CL$-Lipschitz functions. Thus $f$ is $CL$-Lipschitz on {\em all}
segments parallel to coordinates. Hence $f:[0,1]^k\to\bbbh^n$
is $CL$-Lipschitz.
\hfill $\Box$

\subsection{Proof of Theorem~\ref{Lip Gromov}}
We consider an open set $\Omega\subset\bbbr^k$, $k>n$.
Suppose that $f\in C^{0,\frac{1}{2}+}(\Omega;\bbbh^n)$
is locally Lipschitz as a mapping into $\bbbr^{2n+1}$.
By taking a subset of $\Omega$ we may assume that
$\Omega$ is a cube and that $f$ is Lipschitz.
It follows from Rademacher's theorem and from Proposition~\ref{contact}
that image of the classical derivative of $f$ is in the horizontal distribution at almost every point. Hence $f:\Omega\to\bbbh^n$
is Lipschitz by Proposition~\ref{get Heis Lip}.
Now Theorem~\ref{pure k unrect} implies that
$\H^k_{\bbbh^n}(f(\Omega))=0$. 
Since the identity mapping from $\bbbh^n$ to $\bbbr^{2n+1}$
is Lipschitz on compact sets, we also see that
$H^k_{\bbbr^{2n+1}}(f(\Omega))=0$.
This implies that the topological dimension of $f(\Omega)$ is
at most $k-1$, \cite[Theorem~8.15]{heinonen}.
Since the topological dimension is invariant under homeomorphisms,
$f$ cannot be injective on $\Omega$, as otherwise the image would have topological dimension $k$.
\hfill $\Box$


\begin{thebibliography}{888}
%
\bibitem{ambrosiok}
{\sc Ambrosio, L., Kirchheim, B.:}
Rectifiable sets in metric and Banach spaces. 
{\em Math.\ Ann.} 318 (2000), 527--555. 
%
\bibitem{balogh1}
{\sc Balogh, Z. M., F\"assler, K. Mattila, P., Tyson, J. T.:}
Projection and slicing theorems in Heisenberg groups.
{\em Adv.\ Math.} 231 (2012), 569--604. 
%
\bibitem{Crelles}
{\sc Balogh, Z. M.:}
Size of characteristic sets and functions with prescribed gradient.
{\em J. Reine Angew. Math.} 564 (2003), 63--83. 

\bibitem{balogh2}
{\sc Balogh, Z. M., Hoefer-Isenegger, R., Tyson, J. T.:}
Lifts of Lipschitz maps and horizontal fractals in the Heisenberg group.
{\em Ergodic Theory Dynam.\ Systems} 26 (2006), 621--651. 
%
%
\bibitem{capogna}
{\sc  Capogna, L., Danielli, D., Pauls, S. D., Tyson, J. T.:} 
{\em An introduction to the Heisenberg group and the sub-Riemannian isoperimetric problem.} 
Progress in Mathematics, 259. Birkh\"auser Verlag, Basel, 2007. 
%
\bibitem{DS}
{\sc David, G., Semmes, S.:}
{\em Fractured fractals and broken dreams. Self-similar geometry through metric and measure.} 
Oxford Lecture Series in Mathematics and its Applications, 7. 
The Clarendon Press, Oxford University Press, New York, 1997.
%
\bibitem{DHLT}
{\sc DeJarnette, N., Haj\l{}asz, P., Lukyanenko, L. Tyson, J. T.:}
On the lack of density of Lipschitz mappings in Sobolev spaces with Heisenberg target.
arXiv:1109.4641.
%
\bibitem{di}
{\sc DiBenedetto, E.:}
{\em Real analysis.}
Birkh\"auser Advanced Texts: Basler Lehrb\"ucher. 
[Birkh\"auser Advanced Texts: Basel Textbooks] 
Birkh\"auser Boston, Inc., Boston, MA, 2002.
%
\bibitem{EG}
{\sc Evans, L. C., Gariepy, R. F.:} 
{\em Measure theory and fine properties of functions.} 
Studies in Advanced Mathematics. CRC Press, Boca Raton, FL, 1992. 
%
\bibitem{federer}
{\sc Federer, H.:}
{\em Geometric measure theory}. 
Die Grundlehren der mathematischen Wissenschaften, 
Band 153 Springer-Verlag New York Inc., New York 1969 
%
\bibitem{gromov}
{\sc Gromov, M.:}
Carnot-Carath\'eodory spaces seen from within. Sub-Riemannian geometry, pp.\ 79--323, 
Progr.\ Math., 144, Birkh\"auser, Basel, 1996.  
%
\bibitem{hajlasz3}
{\sc Haj\l{}asz, P.:}
Sobolev mappings: Lipschitz density is not an isometric invariant of the target. 
{\em Int.\ Math.\ Res.\ Not.\ IMRN} 2011, no. 12, 2794--2809. 
%
\bibitem{hajlasz1}
{\sc Haj\l{}asz, P.:}
Sobolev spaces on metric-measure spaces. 
In: {\em Heat kernels and analysis on manifolds, graphs, and metric spaces (Paris, 2002)}, 173--218, 
{\em Contemp.\ Math.}, 338, Amer. Math. Soc., Providence, RI, 2003.
%
\bibitem{hajlasz2}
{\sc Haj\l{}asz, P.:}
Change of variables formula under minimal assumptions. 
{\em Colloq.\ Math.} 64 (1993), 93--101. 
%
\bibitem{hajlaszk}
{\sc Haj\l{}asz, P., Koskela, P.},
Sobolev met Poincar\'e. {\em Memoirs Amer.\ Math.\ Soc.} { 688} (2000),
1--101.
%
\bibitem{hajlaszm}
{\sc Haj\l{}asz, P., Mirra, J.:}
The Lusin theorem and horizontal graphs in the Heisenberg group.
{\em Analysis and Geometry in Metric Spaces} 1 (2013) 295--301.
%
\bibitem{hajlaszst}
{\sc Haj\l{}asz, P., Schikorra, A., Tyson, J. T.:}
Homotopy groups of spheres and Lipschitz homotopy groups of Heisenberg groups.
arXiv:1301.4978.
%
\bibitem{hajlaszt}
{\sc Haj\l{}asz, P., Tyson, J. T.:} 
Sobolev Peano cubes. 
{\em Michigan Math.\ J.} 56 (2008), 687--702. 
%
\bibitem{heinonen}
{\sc Heinonen, J.:}
{\em Lectures on analysis on metric spaces.} 
Universitext. Springer-Verlag, New York, 2001. 
%
\bibitem{EnZust}
{\sc Le Donne, E., Z\"ust, R.:}
Some properties of {H}\"older surfaces in the {H}eisenberg group.
To appear in {\em Illinois J. Math.}
%
\bibitem{magnaniHous}
{\sc Magnani, V.:}
Differentiability and area formula on stratified {L}ie groups.
{\em Houston J.\ Math.\ } 27 No.\ 2 (2001), 297--323.
%
\bibitem{magnani2}
{\sc Magnani, V.:}
Nonexistence of horizontal Sobolev surfaces in the Heisenberg group. 
{\em Proc.\ Amer.\ Math.\ Soc.} 138 (2010), 1785--1791.
%
\bibitem{magnani}
{\sc Magnani, V.:}
Unrectifiability and rigidity in stratified groups. 
{\em Arch.\ Math.\ (Basel)} 83 (2004), 568--576. 
%
\bibitem{magnanimm}
{\sc Magnani, V., Mal\'y, J., Mongodi, S.:}
A low rank property and nonexistence of higher dimensional horizontal Sobolev sets.
arXiv:1212.1563.
%
\bibitem{Mattila}
{\sc Mattila, P.:}
{\em Geometry of sets and measures in {E}uclidean spaces.} 
Cambridge Studies in Advanced Mathematics, Vol.\ 44.
Cambridge University Press, Cambridge, 1995. 
%
\bibitem{Pansu}
{\sc Pansu, P.:}
M\'etriques de {C}arnot-{C}arath\'eodory et quasiisom\'etries des espaces sym\'etriques de rang un.
{\em Ann. of Math. (2)} 129 No.\ 1 (1989), 1--60.

\bibitem{sternberg}
{\sc Sternberg, S.:}
{\em Lectures on differential geometry.} 
Second edition. With an appendix by Sternberg and Victor W. Guillemin. 
Chelsea Publishing Co., New York, 1983. 
%

\bibitem{EucPeano}
{\sc Wildrick, K., Z{\"u}rcher, T.:} 
Peano cubes with derivatives in a {L}orentz space.
{\em Illinois J.\ Math.\ } 53 No.\ 2 (2009), 365--378.
%

\bibitem{whitney}
{\sc Whitney, H.:} 
On totally differentiable and smooth functions. 
{\em Pacific J. Math.} 1 (1951), 143--159.
%
\bibitem{Zust}
{\sc Z\"ust, R.:} 
Integration of {H}\"older forms and currents in snowflake spaces.
{\em Calc.\ Var.\ Partial Differential Equations} 40 (2011), 99--124.
%

\end{thebibliography}
\end{document}